\documentclass[11pt,letterpaper,fleqn]{article}
\usepackage{amsmath,amssymb,amsthm,mathtools,bm,mathrsfs}
\usepackage{microtype}
\usepackage{geometry}
\usepackage{graphicx}
\usepackage{booktabs}
\usepackage{placeins}
\usepackage{enumitem}
\usepackage{cite}
\usepackage{float}
\usepackage[colorlinks=true,linkcolor=blue,citecolor=blue,urlcolor=blue]{hyperref}
\hypersetup{
  pdftitle={Higher-order Approximate Symmetries},
  pdfauthor={Alexey Shevyakov}}
\providecommand{\doi}[1]{doi:\,\href{https://doi.org/#1}{\nolinkurl{#1}}}
\def\beq{\begin{equation}}
\def\eeq{\end{equation}}
\def\barr{\begin{array}{ll}}
\def\earr{\end{array}}

\newcommand{\eps}{\epsilon}
\newcommand{\Boxop}{\Box}
\newcommand{\pd}[2]{\frac{\partial #1}{\partial #2}}
\newcommand{\DD}{\mathrm D}

\newcommand{\cE}{\mathcal E}

\newcommand{\ellop}{\ell}
\newcommand{\Eop}{\mathcal L}
\newcommand{\Qpoly}{\mathcal Q}
\newcommand{\Rpoly}{\mathcal R}
\newcommand{\const}{\mathrm{const}}

\newtheorem{theorem}{Theorem}[section]
\newtheorem{lemma}[theorem]{Lemma}
\newtheorem{proposition}[theorem]{Proposition}
\newtheorem{corollary}[theorem]{Corollary}
\numberwithin{equation}{section}

\theoremstyle{definition}
\newtheorem{definition}[theorem]{Definition}

\newtheorem{remark}[theorem]{Remark}

\title{Higher-order Approximate Symmetries
}
\author{Alexey Shevyakov\thanks{The author's name is also transliterated as
Alexei Cheviakov; both spellings refer to the same person.
Correspondence: \texttt{shevyakov@math.usask.ca}.}\\[0.5ex]
\small Department of Mathematics and Statistics, University of Saskatchewan, Saskatoon, Canada}
\date{}

\begin{document}
\maketitle

\begin{abstract}
Fushchych and Shtelen defined approximate symmetry by expanding the solution of
a perturbed differential equation in the small parameter, replacing the
original equation by a triangular system for the expansion coefficients. We
extend this construction to arbitrary order. A coefficient-map formulation
yields the order-$p$ Fushchych--Shtelen (FS) system and establishes, under
explicit hypotheses, a correspondence with the Baikov--Gazizov--Ibragimov
(BGI) method.

The two frameworks are compared for a perturbed cubic wave equation. Up to
equivalences, we classify the nonlinearities admitting an FS dilation through
second order. The classification consists of a generic power family and
logarithmic branches at exceptional exponents. Among perturbations nontrivial
at first order, vertical BGI continuations select a proper subfamily of one FS
branch, and a second common branch arises when the perturbation first enters
at second order; the two methods are related but not interchangeable. Joint reduction by the Lorentz algebra and the
inherited FS dilation integrates all resulting FS normal forms; the explicit
solutions of the 1989 Fushchych--Shtelen letter are recovered as the
first-order members of this reduction. In the part common to both frameworks,
the logarithmic corrections arise from the expansion of a power law with a
perturbation-dependent exponent.

Finally, periodic travelling waves are used to examine the long-scale validity
of the FS expansion. The first correction is obtained by quadrature, the
second-order secular structure is isolated, and phase renormalization produces
bounded order-consistent approximations. For integer-power members of the
generic family, the amplitude dependence of the corrected wavenumber agrees
with the scaling weights obtained from the symmetry classification.
\end{abstract}

\noindent\textbf{Keywords:} approximate symmetries; Fushchych--Shtelen expansion; Baikov--Gazizov--Ibragimov method; nonlinear wave equation; scale invariance; perturbation theory.

\medskip
\noindent\textbf{2020 Mathematics Subject Classification:} 35B06, 35B20, 35C06.

\section{Introduction}\label{sec:intro}

Wilhelm I. Fushchych (1936--1997) founded the Ukrainian school of
group analysis of differential equations and consistently connected symmetry
classification with reductions and explicit solutions. A representative
monograph is \cite{FSS1993}, and an account of his scientific legacy is given in
\cite{Nikitin1997}. The present paper concerns a short 1989 letter written with
W.~M. Shtelen \cite{Fushchich1989}.

In that letter, Fushchych and Shtelen considered a nonlinear wave equation with
a small parameter. Instead of expanding the symmetry generator, they expanded
the solution and equated the coefficients of equal powers of the parameter.
This procedure replaces one perturbed equation by an exact system for the
successive coefficients of the solution. Ordinary Lie symmetry analysis can
then be applied to that larger system without altering the definition of an
exact symmetry.

The letter concluded with approximate solutions. The first correction was
taken to be a function of the leading field, and the resulting equation closed
when the leading field satisfied a d'Alembert--Hamilton differential
constraint. We show that the explicit Lorentz-radial family in
\cite{Fushchich1989} follows from a joint Lorentz--dilation reduction. For this
invariant family, the differential constraint is a consequence of the
reduction, not an additional assumption. The same reduction also extends to
the second perturbation order. We do not claim that every solution of the
d'Alembert--Hamilton system is obtained in this way.

Differential equations containing a small parameter commonly possess a rich symmetry structure in the limiting problem and a substantially smaller exact symmetry algebra after perturbation. Approximate symmetry methods seek transformations that retain useful symmetry information to a prescribed asymptotic order. Two classical and widely used constructions were introduced at essentially the same time. In the Baikov--Gazizov--Ibragimov (BGI) approach, the infinitesimal generator is expanded in the small parameter \cite{Baikov1988,Baikov1989,Baikov1991,Ibragimov1995}. In the Fushchych--Shtelen (FS) approach, the solution itself is expanded, the coefficients of equal powers of the small parameter are set to zero, and ordinary exact Lie symmetry analysis is applied to the resulting system \cite{Fushchich1989}. Other systematic formulations have also been developed. Pakdemirli, Y\"ur\"usoy, and Dolap\c{c}{\i} \cite{Pakdemirli2004} compared the two constructions on a set of examples and proposed a modified dependent-variable approach intended to reduce the computational cost of the FS procedure, while Di Salvo, Gorgone, and Oliveri \cite{DiSalvo2018} developed a perturbation-consistent approximate Lie-group framework in which both the dependent variables and the generator are expanded. These alternatives address related questions with definitions different from the coefficient-system correspondence studied here, and are not discussed further.

Fushchych and Shtelen introduced their construction for the nonlinear wave equation
\begin{equation}\label{eq:introFS}
\Boxop U+\lambda U^3+\eps F(U)=0
\end{equation}
in $(1+3)$-dimensional Minkowski space. They determined all nonlinearities $F$ for which the equation is approximately scale invariant and all those for which it is approximately conformally invariant, and used the resulting symmetries to construct approximate solutions. Their final paragraph explicitly observed that one may continue the expansion
\[
U=U_0+\eps U_1+\eps^2U_2+\cdots
\]
to higher orders, but did not carry out the corresponding higher-order classification.

Comparative studies later showed that the BGI and FS methods need not classify
the same symmetries as stable under perturbation
\cite{Tarayrah2021,Tarayrah2023}. Here, a symmetry of the limiting equation is
called stable when it can be continued to an approximate symmetry of the
perturbed equation. The exact first-order relation was established in
\cite{Druzhkov2024}: every BGI approximate local symmetry has an FS counterpart,
whereas an FS symmetry need not come from a BGI characteristic. The present
paper has four main goals.

First, we formulate the FS construction and the BGI--FS correspondence at an
arbitrary perturbation order. The order-$p$ FS system is triangular in the
following precise sense: its $j$th equation is linear in the newly introduced
coefficient $U_j$, and the operator acting on $U_j$ is the linearization of the
unperturbed equation. A coefficient map records the coefficient of each power
of the small parameter. Using this map, we prove the all-orders correspondence
theorem and state explicitly the two assumptions needed for its converse: local
regularity of the unperturbed equation and the ability to extend every local
solution of that equation to the full FS coefficient system.

Second, we return to the wave equation \eqref{eq:introFS}. We recover the full
first-order scale-invariance result of Fushchych and Shtelen, including its two
logarithmic cases, and then classify the point symmetries of the second-order
FS system for
\begin{equation}\label{eq:introSecond}
\Boxop U+\lambda U^3+\eps F(U)+\eps^2G(U)=0.
\end{equation}
The symmetry is required to project to the dilation of the unperturbed cubic
wave equation. We distinguish a nondegenerate perturbation, in which the first
nonremovable term occurs at order $\eps$, from a degenerate perturbation, in
which it first occurs at order $\eps^2$. The general case gives a hierarchy of
powers. Three exceptional exponents give logarithms because the usual power
solution of the determining equation ceases to exist; this elementary failure
of the power formula is what we call a resonance of the determining
(Euler) equations. A second, distinct resonance of the same type occurs later
in the reduced invariant-surface equations of Section~\ref{sec:waves}; the two
notions are related in Proposition~\ref{prop:solres}. We also perform an independent
BGI calculation for continuations in which the independent-variable components
of the dilation are unchanged and only the dependent-variable component is
corrected. Such continuations are called vertical. Their exact image in the FS
classification is identified. The logarithmic BGI family is not absent from
the FS result: it occurs at an exceptional exponent that the general power formula
necessarily excludes and must therefore be solved separately.

Third, we use the classification to construct solutions. The second-order FS
system is reduced jointly by the Lorentz algebra and the approximate dilation.
The leading equation becomes algebraic, the two remaining invariance equations
are first-order linear equations, and every classified family is integrated in
closed form. The exceptional exponents in the constitutive classification are
also exactly the exceptional exponents of the reduced solution equations. The
first-order formulas recover the solutions in \cite{Fushchich1989}; retaining
the removable affine terms recovers the full table in that paper. On the family
common to the nondegenerate FS and vertical BGI constructions, the second-order expression is
the expansion of a power whose exponent changes with the perturbation
parameter.

Fourth, we examine the long-scale validity of FS expansions. A formal
coefficient hierarchy does not by itself guarantee accuracy when time or
distance grows with the inverse perturbation parameter. The first-order FS wave
solution in \cite{Tarayrah2023} provides a concrete secular-growth example, and
the thesis \cite{McAdam2025} shows, for a related family of nonlinear wave
equations, how slow variables or an equivalent renormalization of the leading
profile remove such growth. For the periodic
travelling waves of the perturbed cubic equation, Section~\ref{sec:cnoidal}
obtains the first correction by quadrature, identifies all
second-order phase terms, and compares genuine first- and second-order
phase-renormalized approximations with numerical solutions.

Second-order and higher-order FS calculations have been carried out for other
classes of perturbed nonlinear wave equations; see, for example,
\cite{Zhang2011}. A second-order FS scale-invariance classification
for \eqref{eq:introSecond} that is complete within the class of point
symmetries projectable onto the $(x,U_0)$-space with prescribed projection
equal to the unperturbed dilation, modulo the formal equivalences of
Section~\ref{sec:wave}, together with the stated vertical BGI comparison,
the explicit reduction, and the analysis of the exceptional exponents, has not,
to our knowledge, appeared in the literature.

The paper is organized as follows. Section~\ref{sec:framework} introduces the
notation and the two approximate-symmetry constructions.
Section~\ref{sec:higherorder} derives the order-$p$ FS system and proves the
correspondence theorem. Section~\ref{sec:wave} gives the first- and second-order
classifications for \eqref{eq:introSecond} and the stated BGI comparison.
Section~\ref{sec:waves} constructs the Lorentz--dilation invariant
approximate solutions and states their ordering conditions.
Section~\ref{sec:cnoidal} uses cnoidal travelling waves to examine secular
growth and phase renormalization, and validates the resulting approximations
numerically. Section~\ref{sec:discussion} presents the
conclusions and open questions.

\section{Exact and approximate local symmetries}\label{sec:framework}

Standard background on Lie group analysis of differential equations may be found in \cite{Ovsiannikov1982,Olver1993,Bluman2010}; for approximate symmetry in the BGI sense see \cite{Baikov1988,Baikov1989,Baikov1991,Ibragimov1995}.

Let $x=(x^1,\ldots,x^n)$ be independent variables and $u=(u^1,\ldots,u^m)$ dependent variables. We write $u^a_\alpha=\DD_\alpha u^a$ for jet variables, where $\alpha$ is a multi-index and $\DD_\alpha$ is a composition of total derivatives. A differential function depends smoothly on $x$, $u$, and finitely many derivatives of $u$.

An evolutionary vector field with characteristic $\varphi=(\varphi^1,\ldots,\varphi^m)$ is
\begin{equation}\label{eq:evofield}
\widehat X_\varphi=\sum_{a,\alpha}\DD_\alpha(\varphi^a)\pd{}{u^a_\alpha}.
\end{equation}
For a point field $X=\xi^i(x,u)\partial_{x^i}+\eta^a(x,u)\partial_{u^a}$, the corresponding evolutionary characteristic is given by $\varphi^a=\eta^a-\xi^iu^a_i$.

\medskip Consider a system of differential equations $F_0[u]=0$, with $F_0=(F_0^1,\ldots,F_0^r)$. We use the following local regularity condition, in the form employed in the
first-order BGI--FS correspondence result \cite{Druzhkov2024}; it is the
finite-order, local version of the regularity (``normality'') assumptions
under which every differential function vanishing on the prolonged equation
belongs to the differential ideal generated by it, see \cite[Ch.~2]{Olver1993}.

\begin{definition}\label{def:regular}
The infinite prolongation of $F_0=0$ is \emph{regular} if, locally on a
sufficiently high finite jet space, every finite-order differential function
that vanishes on it can be represented as $\Delta(F_0)$ for a finite-order
operator $\Delta$ in total derivatives.
\end{definition}

Thus, for a regular equation, $\widehat X_\varphi$ is a local symmetry precisely when
\begin{equation}\label{eq:exactoperator}
\widehat X_\varphi(F_0)=\Delta(F_0)
\end{equation}
for an operator in total derivatives $\Delta$.

Let $\eps$ be a small parameter, and consider a perturbed family of differential equations
\begin{equation}\label{eq:perturbedfamily}
F[u;\eps]=\sum_{j=0}^{p}\eps^jF_j[u]+o(\eps^p).
\end{equation}
Throughout, the $\eps$-expansions may be read as formal truncated power
series; if actual parameter-dependent differential functions are used, they
are assumed sufficiently smooth in $\eps$ at $\eps=0$ for all coefficient
maps through order $p$ below to exist, with the displayed Taylor remainders.
We write $A\approx_p B$ when $A-B=o(\eps^p)$; thus $A\approx_pB$ means that
all coefficients of $A$ and $B$ up to the order $\eps^p$ agree. The little-$o$
convention is used because it requires only the coefficient maps through order
$p$ to exist and does not assume a $(p+1)$-st derivative in $\eps$; when a full
Taylor expansion is available, the remainder may equivalently be written
$O(\eps^{p+1})$, which is the convention often associated with
approximation to order $p$. An approximate characteristic has the form
\begin{equation}\label{eq:approxchar}
\varphi[u;\eps]\approx_p\sum_{j=0}^{p}\eps^j\varphi_j[u].
\end{equation}

\begin{definition}[BGI symmetry of order $p$]\label{def:BGI}
The approximate evolutionary field $\widehat X_\varphi$ is a BGI approximate symmetry of \eqref{eq:perturbedfamily} to order $p$ if there is an approximate operator in total derivatives
\[
\Delta\approx_p\Delta_0+\eps\Delta_1+\cdots+\eps^p\Delta_p
\]
such that
\begin{equation}\label{eq:BGIoperator}
\widehat X_\varphi(F)\approx_p\Delta(F).
\end{equation}
\end{definition}

\begin{remark}[Relation to the on-solutions formulation]\label{rem:BGIonshell}
Definition~\ref{def:BGI} is the characteristic (operator) formulation of BGI
approximate symmetry, and it is the formulation used throughout this paper.
The BGI method can instead be expressed through the approximate
infinitesimal invariance condition $\widehat X_\varphi(F)\approx_p0$ imposed on
the approximate solution manifold of \eqref{eq:perturbedfamily}. When the
unperturbed equation $F_0=0$ is regular in the sense of
Definition~\ref{def:regular}, the two formulations are equivalent in the
formal setting considered here: at each order, a coefficient that vanishes on
the equation manifold can be represented as a differential operator applied
to $F_0$, yielding the identities \eqref{eq:BGItier}. The first-order case is
given in \cite{Druzhkov2024}.
\end{remark}

Equating coefficients of $\eps^j$ in \eqref{eq:BGIoperator} gives
\begin{equation}\label{eq:BGItier}
\sum_{a+b=j}\widehat X_{\varphi_a}(F_b)
=
\sum_{a+b=j}\Delta_a(F_b),
\qquad j=0,\ldots,p.
\end{equation}
In particular, $\varphi_0$ is the characteristic of an exact symmetry of $F_0=0$.

\medskip The FS construction instead expands the solution,
\begin{equation}\label{eq:FSexpGeneral}
u_\eps=U_0+\eps U_1+\cdots+\eps^pU_p,
\end{equation}
and substitutes \eqref{eq:FSexpGeneral} into \eqref{eq:perturbedfamily}. The exact equations obtained by setting each coefficient of $\eps^j$ equal to zero constitute the order-$p$ FS system. Its detailed structure is developed next.

\section{The higher-order FS system and the BGI--FS correspondence}\label{sec:higherorder}

\subsection{Coefficient maps and triangularity}

For an $\eps$-dependent differential function $Q[u;\eps]$, define the coefficient map
\begin{equation}\label{eq:Cj}
\mathscr C_j(Q)
:=\frac{1}{j!}\left.\frac{d^j}{d\eps^j}Q[u_\eps;\eps]\right|_{\eps=0},
\qquad j=0,\ldots,p,
\end{equation}
where $u_\eps$ is given by \eqref{eq:FSexpGeneral}. The derivative in \eqref{eq:Cj} is taken after all jet variables $u^a_\alpha$ have been replaced by
\[
\DD_\alpha U_0^a+\eps\DD_\alpha U_1^a+\cdots+\eps^p\DD_\alpha U_p^a.
\]
The maps $\mathscr C_j$ are linear, commute with total derivatives, and satisfy the convolution rule
\begin{equation}\label{eq:convolution}
\mathscr C_j(QR)=\sum_{a+b=j}\mathscr C_a(Q)\mathscr C_b(R).
\end{equation}

The order-$p$ FS system is
\begin{equation}\label{eq:FSsystemGeneral}
S_j:=\mathscr C_j(F)=0,
\qquad j=0,\ldots,p.
\end{equation}

\begin{proposition}[Triangular structure]\label{prop:triangular}
For each $j\geq1$,
\begin{equation}\label{eq:triangular}
S_j=\ellop_{F_0}[U_0](U_j)+R_j(U_0,\ldots,U_{j-1}),
\end{equation}
where $\ellop_{F_0}[U_0]$ is the linearization of $F_0$ at $U_0$. Consequently, every equation $S_j=0$ with $j\geq1$ is linear in its highest coefficient $U_j$ and has the same linear operator in that coefficient.
\end{proposition}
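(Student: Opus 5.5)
The plan is to compute $S_j=\mathscr C_j(F)$ directly from the definition \eqref{eq:Cj} and isolate the dependence on $U_j$. Write $F[u;\eps]=\sum_{b=0}^p\eps^bF_b[u]+o(\eps^p)$, so that, by linearity of $\mathscr C_j$ and the convolution rule \eqref{eq:convolution} applied to $\eps^b\cdot F_b$ (with $\mathscr C_a(\eps^b)=\delta_{ab}$),
\[
S_j=\sum_{b=0}^{j}\mathscr C_{j-b}(F_b).
\]
The remainder contributes nothing to coefficients of order at most $p$. The key observation concerns $u_\eps$ in \eqref{eq:FSexpGeneral}. The coefficient $U_j$ first appears at order $\eps^j$, so for any $\eps$-independent differential function $Q[u]$ the coefficient $\mathscr C_k(Q)$ depends only on $U_0,\ldots,U_k$. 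Hence every term with $b\geq1$ depends only on $U_0,\ldots,U_{j-1}$, and these terms are absorbed into $R_j$.

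It then remains to treat $\mathscr C_j(F_0)$. Set $v_\eps=U_0+\eps U_1+\cdots+\eps^{j-1}U_{j-1}+\eps^{j+1}(\cdots)$, so that $u_\eps=v_\eps+\eps^jU_j$. I will Taylor-expand $F_0$ in the jet variables around $v_\eps$, along the direction $\eps^j\DD_\alpha U_j$:
\[
F_0[u_\eps]=F_0[v_\eps]+\eps^j\sum_{a,\alpha}\pd{F_0}{u^a_\alpha}[v_\eps]\,\DD_\alpha U_j^a+O(\eps^{2j}).
\]
Taking the $\eps^j$ coefficient, the first term yields a function of $U_0,\ldots,U_{j-1}$ only, since $\eps^{j+1}$ terms cannot contribute. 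The second term yields $\sum\pd{F_0}{u^a_\alpha}[U_0]\DD_\alpha U^a_j=\ellop_{F_0}[U_0](U_j)$. The $O(\eps^{2j})$ terms vanish at order $j$ because $j\geq1$. Equivalently, one can use Faà di Bruno on $\frac{d^j}{d\eps^j}F_0[u_\eps]$: the only term containing $U_j$ is the one with a single derivative of the argument of order $j$, evaluated at $\eps=0$.

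Collecting the pieces gives \eqref{eq:triangular}, with the operator $\ellop_{F_0}[U_0]$ independent of $j$, and the consequence about linearity follows at once. The only delicate step is bookkeeping rather than difficulty. One must verify that no term of order $\eps^{2j}$ or higher, and no $F_b$ with $b\geq1$, can involve $U_j$ at order $j$. One must also check that smoothness in $\eps$ under the paper's little-$o$ convention suffices for the Taylor step. Since $F_0$ is $\eps$-independent and $u_\eps$ is polynomial in $\eps$, the expansion above is legitimate as stated.
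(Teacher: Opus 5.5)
Your proof is correct and is the paper's argument: $U_j$ enters $u_\eps$ only with the factor $\eps^j$, so at order $\eps^j$ it can appear only through the first Taylor variation of $F_0$, evaluated at $U_0$. The contributions of $F_b$ with $b\geq1$ involve only $U_0,\ldots,U_{j-b}$. Your decomposition $S_j=\sum_{b}\mathscr C_{j-b}(F_b)$ and the expansion of $F_0$ about $v_\eps$ just write out the bookkeeping that the paper's short proof states in words.
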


\begin{proof}
The variable $U_j$ enters $u_\eps$ multiplied by $\eps^j$. In the coefficient of $\eps^j$, it can therefore occur only once and only through the first Taylor variation of $F_0$; every product of $U_j$ with a positive-order contribution has order strictly larger than $j$. Terms coming from $F_k$, $k\geq1$, involve only $U_0,\ldots,U_{j-k}$. This gives \eqref{eq:triangular}.
\end{proof}

The proposition states that the same linearization operator is used at every order, whereas the source terms differ.

\begin{definition}[FS-covering]\label{def:covering}
The order-$p$ FS system defines an FS-covering if the projection
\[
(U_0,U_1,\ldots,U_p)\longmapsto U_0
\]
from its infinite prolongation onto the infinite prolongation of $F_0=0$ is surjective.
\end{definition}

For equations in an extended Kovalevskaya form, the triangular equations may be solved successively for the leading derivatives of $U_1,\ldots,U_p$ at the formal local jet level, and the FS-covering property follows. Here ``at the formal local jet level'' means local algebraic solvability in the infinite jet space: once a leading derivative has been selected for $F_0=0$ and all derivatives of the leading derivative have been expressed through the remaining (parametric) jet coordinates, the prolonged equation $S_j=0$, which by Proposition~\ref{prop:triangular} is linear in $U_j$ with the same principal part, is solved in the same way for the corresponding leading derivative of $U_j$ and its derivatives. Every point of the prolonged unperturbed equation, specified by its parametric coordinates, therefore lifts to a point of the prolonged FS system by choosing the parametric coordinates of $U_1,\ldots,U_p$ arbitrarily. No convergence of a formal jet and no global solvability for arbitrary Cauchy data is required or asserted; see \cite[Sect.~2.6]{Olver1993} for the parametric description of prolonged equations in Kovalevskaya form. For gauge or otherwise underdetermined systems, differential identities may impose compatibility conditions on the perturbations, as already occurs at first order \cite{Druzhkov2024}.

\subsection{The expansion of an approximate characteristic}

Let \eqref{eq:approxchar} be an approximate characteristic and define
\begin{equation}\label{eq:theta}
\Theta_j:=\mathscr C_j(\varphi),
\qquad j=0,\ldots,p.
\end{equation}
Then $\Theta=(\Theta_0,\ldots,\Theta_p)$ is a characteristic on the FS variables. For example,
\begin{align}
\Theta_0&=\varphi_0[U_0],\nonumber\\[1ex]
\Theta_1&=\delta_1\varphi_0[U_0]+\varphi_1[U_0],\label{eq:theta12}\\[1ex]
\Theta_2&=\left(\delta_2+\frac12\delta_1^2\right)\varphi_0[U_0]
+\delta_1\varphi_1[U_0]+\varphi_2[U_0],\nonumber
\end{align}
where
\[
\delta_j=\sum_{a,\alpha}\DD_\alpha U_j^a\pd{}{U^a_{0,\alpha}}.
\]

\begin{lemma}[Chain rule for coefficient maps]\label{lem:chain}
Let $\mathbf X_\Theta$ be the evolutionary field on $(U_0,\ldots,U_p)$ with characteristic \eqref{eq:theta}. Then for every differential function or truncated $\eps$-family $Q$,
\begin{equation}\label{eq:chain}
\mathbf X_\Theta\bigl(\mathscr C_j(Q)\bigr)
=
\mathscr C_j\bigl(\widehat X_\varphi(Q)\bigr),
\qquad j=0,\ldots,p.
\end{equation}
\end{lemma}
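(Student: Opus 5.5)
We prove \eqref{eq:chain} by packaging all coefficient maps into one substitution homomorphism. Let $\sigma$ denote the operation that replaces each jet variable $u^a_\alpha$ by $\DD_\alpha U^a_0+\eps\DD_\alpha U^a_1+\cdots+\eps^p\DD_\alpha U^a_p$, so that $\mathscr C_j(Q)$ is the $\eps^j$ Taylor coefficient of $\sigma(Q)$. On the FS side introduce the $\eps$-dependent evolutionary field
\[
\mathbf X^\eps:=\sum_{j=0}^{p}\eps^j\,\mathbf X_{(\Theta_j)},
\]
where $\mathbf X_{(\Theta_j)}$ moves only $U_j$, with characteristic $\Theta_j$. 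Its action on $\sigma(u^a_\alpha)$ is $\sum_j\eps^j\DD_\alpha\Theta^a_j$. Because $\mathbf X_\Theta$ acts on each $U_j$ separately, the $\eps^j$ coefficient of $\mathbf X^\eps(\sigma Q)$ is not yet $\mathbf X_\Theta(\mathscr C_jQ)$. The first step is therefore to fix the bookkeeping. The intended reading of $\mathbf X_\Theta$ is the evolutionary field on $(U_0,\ldots,U_p)$ with $\mathbf X_\Theta(U_j)=\Theta_j$, and it does not involve $\eps$. Then
\[
\mathbf X_\Theta(\sigma Q)=\sum_{a,\alpha}\pd{Q}{u^a_\alpha}\Big|_{\sigma}\sum_{k}\eps^k\DD_\alpha\Theta^a_k,
\]
since $\mathbf X_\Theta$ commutes with the $\eps$-expansion: $\eps$ is a parameter, so $\mathbf X_\Theta(\mathscr C_j Q)=\mathscr C_j$-coefficient of $\mathbf X_\Theta(\sigma Q)$.

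The second step is to compare this with $\sigma(\widehat X_\varphi Q)=\sum \sigma(\partial Q/\partial u^a_\alpha)\,\sigma(\DD_\alpha\varphi^a)$. Since the $\mathscr C_j$ commute with total derivatives, $\sigma(\DD_\alpha\varphi^a)\approx_p\sum_k\eps^k\DD_\alpha\mathscr C_k(\varphi^a)=\sum_k\eps^k\DD_\alpha\Theta^a_k$. This matches the previous display coefficientwise through order $p$. Applying the convolution rule \eqref{eq:convolution} to the product, and using linearity of $\mathscr C_j$ and of $\mathbf X_\Theta$, gives \eqref{eq:chain} for each $j\le p$. When $Q$ is a truncated $\eps$-family $\sum\eps^bQ_b$, $\sigma$ is applied to each $Q_b$ and the explicit $\eps^b$ factors pass through both sides. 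Again only coefficients up to $\eps^p$ are involved, so the $o(\eps^p)$ remainders are harmless.

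The step that needs care is the identity $\mathbf X_\Theta\circ\sigma=\sigma\circ(\text{chain rule})$ on the FS side. It requires that $\mathbf X_\Theta$, as an evolutionary field, commutes with the total derivatives $\DD_\alpha$ acting on the $U_j$. This is standard for evolutionary fields. It also requires that $\mathbf X_\Theta$ applied to $\sigma(Q)$ obeys the ordinary chain rule through the substituted jet variables; this holds because $\sigma(Q)$ depends on the FS jets only through the combinations $\sigma(u^a_\alpha)$ and on $x$ explicitly, which $\mathbf X_\Theta$ does not move. As a check at low order, the formulas \eqref{eq:theta12} are exactly $\mathscr C_j(\varphi)$ and are consistent with this identity.
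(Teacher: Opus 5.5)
Your argument is correct and is essentially the paper's proof: apply the $\eps$-independent field $\mathbf X_\Theta$ to $Q[u_\eps;\eps]$, use the chain rule with $\mathbf X_\Theta(\DD_\alpha U^a_k)=\DD_\alpha\Theta^a_k$, recognize $\sum_k\eps^k\DD_\alpha\Theta^a_k\approx_p\DD_\alpha\varphi^a[u_\eps;\eps]$, and take the $\eps^j$ coefficient. The opening detour through $\mathbf X^\eps=\sum_j\eps^j\mathbf X_{(\Theta_j)}$ is unnecessary, and it is also misstated: that field sends $\sigma(u^a_\alpha)$ to $\sum_j\eps^{2j}\DD_\alpha\Theta^a_j$, not $\sum_j\eps^j\DD_\alpha\Theta^a_j$. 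Simply delete it.
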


\begin{proof}
Apply $\mathbf X_\Theta$ to $Q[u_\eps;\eps]$. By the chain rule and the definition of $\Theta_j$,
\[
\mathbf X_\Theta(Q[u_\eps;\eps])
=
\sum_{a,\alpha}\pd{Q}{u^a_\alpha}[u_\eps;\eps]\,
\DD_\alpha\left(\sum_{j=0}^{p}\eps^j\Theta_j^a\right)
\approx_p
\widehat X_\varphi(Q)[u_\eps;\eps].
\]
Taking the coefficient of $\eps^j$ gives \eqref{eq:chain}.
\end{proof}

\subsection{Correspondence theorem}

When an $\eps$-dependent operator in total derivatives $\Delta$ is evaluated at $u_\eps$, its coefficient functions also have expansions. Denote by $\nabla_j$ the coefficient operator obtained at order $j$. The convolution rule then gives
\begin{equation}\label{eq:operatorconv}
\mathscr C_j(\Delta F)=\sum_{a+b=j}\nabla_a(S_b).
\end{equation}

\begin{theorem}[All-orders BGI--FS correspondence]\label{thm:correspondence}
Let $\varphi$ be an approximate characteristic \eqref{eq:approxchar} of order
$p$ and let $\Theta$ be its coefficient characteristic \eqref{eq:theta}.
\begin{enumerate}[label=\textnormal{(\roman*)}]
\item If $\varphi$ is a BGI approximate symmetry characteristic of order $p$
of \eqref{eq:perturbedfamily}, then $\Theta$ is an exact local symmetry
characteristic of the order-$p$ FS system \eqref{eq:FSsystemGeneral}. 
%No hypothesis on $F_0$ is needed.

\item Conversely, assume that the unperturbed equation $F_0=0$ is regular
(Definition~\ref{def:regular}) and that the order-$p$ FS system defines an
FS-covering (Definition~\ref{def:covering}). If $\Theta$ is an exact local
symmetry characteristic of the FS system, then $\varphi$ is a BGI approximate
symmetry characteristic of order $p$.
\end{enumerate}
\end{theorem}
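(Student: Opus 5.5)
For part (i), I apply the coefficient maps to the BGI identity \eqref{eq:BGIoperator}. By Lemma~\ref{lem:chain}, $\mathbf X_\Theta(S_j)=\mathbf X_\Theta(\mathscr C_j(F))=\mathscr C_j(\widehat X_\varphi(F))$. The BGI condition says $\widehat X_\varphi(F)-\Delta(F)=o(\eps^p)$ as a differential function. Since its coefficients through order $p$ vanish identically, its coefficient maps $\mathscr C_j$, $j\le p$, vanish after substituting $u_\eps$. Hence \eqref{eq:operatorconv} gives
\[
\mathbf X_\Theta(S_j)=\mathscr C_j(\Delta F)=\sum_{a+b=j}\nabla_a(S_b),\qquad j=0,\ldots,p .
\]
Each $\nabla_a$ is an operator in total derivatives on the FS jet space. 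The right-hand side therefore lies in the differential ideal of the FS system, which is exactly the statement that $\Theta$ is an exact local symmetry characteristic. No regularity is needed here, because the identity holds off-shell.

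For part (ii), I argue by induction on the order. Assume that $\mathbf X_\Theta(S_j)$ vanishes on the prolonged FS system for all $j\le p$. Write $E:=\widehat X_\varphi(F)\approx_p\sum_j\eps^jE_j[u]$, where $E_j=\sum_{a+b=j}\widehat X_{\varphi_a}(F_b)$. The goal is to construct $\Delta_0,\ldots,\Delta_p$ satisfying \eqref{eq:BGItier}. By Lemma~\ref{lem:chain}, $\mathscr C_j(E)$ vanishes on the FS system, and $\mathscr C_j(E)=E_j[U_0]+(\text{terms involving }U_1,\ldots,U_j)$.

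The inductive step runs as follows. Suppose $\Delta_0,\ldots,\Delta_{j-1}$ have been found with $E_k=\sum_{a+b=k}\Delta_a(F_b)$ for $k<j$. Form the remainder $\widetilde E:=E-\sum_{a<j}\eps^a\Delta_a(F)$, truncated at order $p$. Its coefficients of order $<j$ vanish identically as differential functions. Consequently, by the convolution rule, $\mathscr C_j(\widetilde E)=\widetilde E_j[U_0]$ depends on $U_0$ alone: every term involving $U_1,\ldots$ multiplies some lower coefficient $\widetilde E_k$ or its derivatives with $k<j$, and these are zero. Moreover, $\mathscr C_j(\sum_{a<j}\eps^a\Delta_aF)$ lies in the FS ideal by \eqref{eq:operatorconv}. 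Therefore $\widetilde E_j[U_0]$ vanishes on the prolonged FS system.

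Now the FS-covering hypothesis applies. Since $\widetilde E_j[U_0]$ depends only on the $U_0$-jet, and every point of the prolonged equation $F_0=0$ lifts to a point of the prolonged FS system, $\widetilde E_j$ vanishes on the prolonged $F_0=0$. Regularity (Definition~\ref{def:regular}) then gives an operator $\Delta_j$ with $\widetilde E_j=\Delta_j(F_0)$. This is exactly the order-$j$ tier of \eqref{eq:BGItier}, which completes the induction. The base case $j=0$ is the same argument with an empty sum.

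The main obstacle is the bookkeeping claim that the order-$j$ coefficient of $\widetilde E$ under $\mathscr C_j$ reduces to a function of $U_0$ alone. This requires checking carefully that the $\delta$-type terms, such as $\delta_1\widetilde E_{j-1}$ in \eqref{eq:theta12}, vanish because $\widetilde E_k\equiv0$ identically off-shell for $k<j$, not merely on-shell. This is why the inductive subtraction of $\Delta_a(F)$ as exact identities is essential. A secondary point is that the truncation of $\Delta$ interacts with the $o(\eps^p)$ remainders only beyond order $p$.
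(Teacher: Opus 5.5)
Your proposal is correct and follows essentially the same route as the paper. For (i) you apply $\mathscr C_j$ to the BGI identity using Lemma~\ref{lem:chain} and \eqref{eq:operatorconv}. For (ii) you use the same induction: subtracting $\sum_{a<j}\eps^a\Delta_a(F)$ leaves an order-$j$ coefficient depending on the $U_0$-jet alone, and the FS-covering and regularity hypotheses then produce $\Delta_j$.
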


\begin{proof}
(i) Suppose that \eqref{eq:BGIoperator} holds. Apply $\mathscr C_j$ and use Lemma~\ref{lem:chain} and \eqref{eq:operatorconv}:
\[
\mathbf X_\Theta(S_j)=\sum_{a+b=j}\nabla_a(S_b).
\]
The right-hand side vanishes on the infinite prolongation of the FS system, so $\mathbf X_\Theta$ is an FS symmetry.

\medskip\noindent
(ii) Suppose that $\mathbf X_\Theta$ is an FS symmetry. Put
\[
R_j:=\sum_{a+b=j}\widehat X_{\varphi_a}(F_b),
\qquad j=0,\ldots,p,
\]
so that, before the substitution $u=u_\eps$,
\[
\widehat X_\varphi(F)=\sum_{j=0}^{p}\eps^jR_j+o(\eps^p).
\]
We construct $\Delta_0,\ldots,\Delta_p$ recursively. For $j=0$, Lemma~\ref{lem:chain} gives
\[
R_0[U_0]=\mathbf X_\Theta(S_0)=0
\]
on the FS system. The covering property implies that $R_0$ vanishes on the unperturbed equation,
and local regularity on a sufficiently high finite jet space yields a
finite-order total-differential operator $\Delta_0$ such that
$R_0=\Delta_0(F_0)$.

Assume that $\Delta_0,\ldots,\Delta_{j-1}$ have been constructed and that
\begin{equation}\label{eq:inductiveresidual}
R_k=\sum_{a+b=k}\Delta_a(F_b),
\qquad k=0,\ldots,j-1.
\end{equation}
Let $\Delta^{<j}=\sum_{a=0}^{j-1}\eps^a\Delta_a$ and consider
\[
Q:=\widehat X_\varphi(F)-\Delta^{<j}(F).
\]
By \eqref{eq:inductiveresidual}, the coefficients of $Q$ below order $j$ vanish identically, while its order-$j$ coefficient is
\[
H_j:=R_j-\sum_{\substack{a+b=j\\a<j}}\Delta_a(F_b).
\]
On the other hand, the coefficient $\mathscr C_j(\widehat X_\varphi F)$ is $\mathbf X_\Theta(S_j)$ by Lemma~\ref{lem:chain}, and the coefficient $\mathscr C_j(\Delta^{<j}F)$ is a linear combination, with total-derivative operators, of $S_0,\ldots,S_j$. Both vanish on the FS system. Hence $\mathscr C_j(Q)=0$ there. Since all coefficients of $Q$ below order $j$ are zero, its order-$j$ coefficient after the substitution is simply $H_j[U_0]$. Therefore
\[
H_j[U_0]=0
\quad\hbox{on the FS system}.
\]
The function $H_j$ depends only on the original jet variables. Surjectivity of
the FS projection implies that $H_j$ vanishes on $F_0=0$, and the same local
regularity statement gives a finite-order operator $\Delta_j$ such that
$H_j=\Delta_j(F_0)$. This is exactly the order-$j$ identity in \eqref{eq:BGItier}. Induction over $j=0,\ldots,p$ proves \eqref{eq:BGIoperator}.
\end{proof}

\begin{corollary}\label{cor:inclusion}
At every perturbation order, the coefficient map \eqref{eq:theta} embeds BGI approximate local symmetry characteristics into the set of exact local symmetry characteristics of the FS system (part (i) of Theorem~\ref{thm:correspondence}). Under the hypotheses of part (ii), an FS symmetry characteristic is in the BGI image if and only if it is the coefficient expansion of a single approximate characteristic on the original jet space.
\end{corollary}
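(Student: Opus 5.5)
The corollary will be read off from Theorem~\ref{thm:correspondence}, so the plan is to make explicit the map
\[
\mathcal J:\ \varphi\longmapsto\Theta=(\mathscr C_0(\varphi),\ldots,\mathscr C_p(\varphi)),
\]
from approximate characteristics \eqref{eq:approxchar} (taken modulo $o(\eps^p)$) to characteristics on the FS jet space, and to check two things: that $\mathcal J$ is injective, and that it carries BGI symmetries into FS symmetries. The second is exactly part~(i) of the theorem, which needs no hypothesis on $F_0$. Injectivity is what justifies calling $\mathcal J$ an embedding.

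For injectivity, first use linearity of the coefficient maps, so $\mathcal J$ is linear and it suffices to show that $\Theta=0$ forces $\varphi_0=\cdots=\varphi_p=0$. Proceed by induction using the explicit form \eqref{eq:theta12} and its general-order analogue. We have $\Theta_j=\varphi_j[U_0]+(\text{terms built from }\varphi_0,\ldots,\varphi_{j-1}\text{ and }\delta_1,\ldots,\delta_j)$. These extra terms come from the Taylor expansion of $\varphi_a[u_\eps]$ for $a<j$. If $\varphi_0,\ldots,\varphi_{j-1}$ already vanish, they drop out, and $\Theta_j=\varphi_j[U_0]$. This is a differential function of $U_0$ alone on the free jet space, so $\Theta_j=0$ gives $\varphi_j=0$.

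A more conceptual route gives the same conclusion: evaluate $\Theta$ at the point where $U_1=\cdots=U_p=0$. Then $u_\eps=U_0$, and $\mathscr C_j(\varphi)$ reduces to $\varphi_j[U_0]$. This shows directly that $\varphi$ is recovered from $\Theta$ by restriction to the slice $U_{\ge1}=0$, which simultaneously gives injectivity and an explicit left inverse.

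For the ``if and only if'' under the hypotheses of part~(ii), argue in both directions.
\begin{enumerate}[label=\textnormal{(\alph*)}]
\item If an FS symmetry characteristic $\Theta$ lies in the BGI image, then by definition $\Theta=\mathcal J(\varphi)$ for a single approximate characteristic $\varphi$ on the original jet space.
\item Conversely, suppose $\Theta$ is an exact FS symmetry and $\Theta=\mathcal J(\varphi)$ for some approximate characteristic $\varphi$. Part~(ii) of Theorem~\ref{thm:correspondence}, which uses regularity together with the FS-covering, shows that this $\varphi$ is a BGI approximate symmetry of order $p$. Hence $\Theta$ is in the BGI image.
\end{enumerate}

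The only delicate point is the precise meaning of ``embeds'' and ``the set of characteristics''. Should symmetries be identified modulo trivial ones, i.e.\ characteristics vanishing on the equation? Then injectivity must be checked modulo those equivalences. A BGI-trivial $\varphi$ corresponds to characteristics vanishing on $F=0$ approximately, and the question is whether its image is exactly an FS-trivial $\Theta$. I would state the corollary at the level of characteristics themselves, where the slice-restriction argument above gives injectivity cleanly. I would add a remark that, modulo trivial symmetries, the same argument works under the hypotheses of part~(ii): the covering and regularity transfer vanishing on the FS system back to vanishing on $F_0=0$ order by order, as in the inductive step of the theorem's proof. This quotient-level statement is the step I expect to require the most care.
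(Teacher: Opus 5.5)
Your proposal is correct and follows the paper's route. The inclusion is part (i) of Theorem~\ref{thm:correspondence}; injectivity at the literal level of differential functions is the recursive recovery of $\varphi_j$ from \eqref{eq:theta12} recorded in Remark~\ref{rem:literal}, and your restriction to the slice where all jet coordinates of $U_1,\ldots,U_p$ vanish is just a compact form of that recovery; the ``if and only if'' is immediate from part (ii). Your optional quotient-level remark goes beyond what the paper claims, since the paper deliberately takes no quotient, but the order-by-order covering/regularity argument you sketch for it is sound.
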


\begin{remark}[Characteristics as differential functions]\label{rem:literal}
In Theorem~\ref{thm:correspondence} and
Corollary~\ref{cor:inclusion}, characteristics are regarded as differential
functions; no quotient is taken by characteristics vanishing on the
corresponding equation manifolds. Thus equality is literal, and the
coefficient map is injective because $\varphi$ can be recovered from
$\Theta$ by \eqref{eq:theta12} and its higher-order analogues.
\end{remark}

\begin{remark}
The inclusion is known to be strict at first order \cite{Druzhkov2024}. Strictness at a higher order $p=2$ is illustrated by an example (Corollary~\ref{cor:strict} below), where the second-order FS system of the perturbed cubic wave equation admits an exact dilation whose characteristic is not a coefficient expansion.
\end{remark}

\section{The nonlinear wave equation: first- and second-order scale invariance}\label{sec:wave}

As a particular example, consider a scalar PDE
\begin{equation}\label{eq:wavefull}
\Boxop U+\lambda U^3+\eps F(U)+\eps^2G(U)=0,
\end{equation}
where $\Boxop=\partial_t^2-\partial_x^2-\partial_y^2-\partial_z^2$, $\lambda\neq0$ is a constant coefficient, and $F(U)$, $G(U)$ are constitutive functions. All constitutive classifications below are local on a connected interval in the $U$-axis. Thus arbitrary real powers are interpreted, for example, on $U>0$, while the logarithmic representatives are written with $\ln|U|$ on an interval not containing $U=0$. The unperturbed equation
\begin{equation}\label{eq:cubicwave}
\Boxop U+\lambda U^3=0
\end{equation}
has the $15$-dimensional conformal point-symmetry algebra in $(1+3)$ dimensions: four translations, six Lorentz transformations, one dilation, and four special conformal transformations. The Poincar\'e subalgebra remains exact for arbitrary $F$ and $G$.

The hypotheses of Theorem~\ref{thm:correspondence}(ii) hold for this family.
Equation \eqref{eq:cubicwave} is in Kovalevskaya form with respect to $t$: it
can be solved for $U_{tt}$, so every differential function vanishing on its
prolongation is obtained by substituting $U_{tt}$ and its total derivatives,
which exhibits it as a total-derivative operator applied to
$\Boxop U+\lambda U^3$; hence \eqref{eq:cubicwave} is regular in the sense of
Definition~\ref{def:regular}. The FS coefficient equations, displayed in
\eqref{eq:FSfirst} and \eqref{eq:FSsecond} below and in general of the form
\eqref{eq:triangular}, are each solvable for $U_{j,tt}$ with a right-hand
side depending only on $U_0,\ldots,U_{j-1}$, their derivatives, and
$U_j$ itself; they are therefore in extended Kovalevskaya form and define an
FS-covering by the argument following Definition~\ref{def:covering}. The
correspondence theorem thus applies at every order to the wave-equation family
considered in this section. We focus on the dilation symmetry generated by
\begin{equation}\label{eq:D0}
D_0=-x^\mu\partial_{x^\mu}+U_0\partial_{U_0}.
\end{equation}
The opposite overall sign gives the convention used in the original 1989 paper of Fushchych and Shtelen. In \eqref{eq:D0} and below, the summation convention is used, $\mu=0,\ldots,3$,
$x^0=t$, so that
$x^\mu\partial_{x^\mu}=t\,\partial_t+x\,\partial_x+y\,\partial_y+z\,\partial_z$
is the scaling operator in space--time coordinates.

\begin{proposition}[Exact dilation]\label{prop:exactdilation}
For the full equation \eqref{eq:wavefull} to exactly admit the dilation
\[
D=-x^\mu\partial_{x^\mu}+U\partial_U
\]
for all fixed values of $\eps$, it is necessary and sufficient that
\begin{equation}\label{eq:exactcubicFG}
F(U)=a_1U^3,\qquad G(U)=a_2U^3.
\end{equation}
Thus, modulo the perturbation-dependent redefinition of the cubic coefficient in \eqref{eq:eqv2} below, the perturbed family has no nontrivial exact dilation. Under the same condition the full conformal algebra is retained.
\end{proposition}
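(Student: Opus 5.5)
I will prove the proposition by a direct Lie-invariance computation for a point field. The dilation $D$ is a point transformation, so the plan is to compute its second prolongation and impose the infinitesimal criterion on the solution manifold of \eqref{eq:wavefull}, treating $\eps$ as a fixed numerical parameter.

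The steps, in order, are as follows.

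First, compute the prolongation. The coefficients are $\xi^\mu=-x^\mu$ and $\eta=U$. The first-order prolongation coefficients are $\eta^{(\mu)}=2U_\mu$. The second-order ones are $\eta^{(\mu\nu)}=3U_{\mu\nu}$. Hence $\mathrm{pr}^{(2)}D(\Boxop U)=3\Boxop U$.

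Second, apply $\mathrm{pr}^{(2)}D$ to $\Boxop U+\lambda U^3+\eps F(U)+\eps^2G(U)$. This gives
\[
3\Boxop U+3\lambda U^3+\eps\,UF'(U)+\eps^2\,UG'(U).
\]

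Third, restrict to solutions. On the solution manifold we substitute $\Boxop U=-\lambda U^3-\eps F-\eps^2G$, which is legitimate because the equation is in Kovalevskaya form in $t$, as noted above. The criterion then reduces to
\[
\eps\bigl(UF'-3F\bigr)+\eps^2\bigl(UG'-3G\bigr)=0 .
\]

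Fourth, extract conditions on $F$ and $G$. The criterion must hold for all fixed $\eps$ in an interval, and $F$ and $G$ do not depend on $\eps$, so the coefficients of $\eps$ and $\eps^2$ vanish separately. This gives the Euler equations $UF'=3F$ and $UG'=3G$. On a connected $U$-interval these integrate to $F=a_1U^3$ and $G=a_2U^3$; if the interval contains $0$, the form still follows by smoothness. Sufficiency is immediate by reversing the computation.

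Fifth, handle the remaining claims. In the case \eqref{eq:exactcubicFG}, the equation is $\Boxop U+(\lambda+\eps a_1+\eps^2a_2)U^3=0$. This is the unperturbed cubic wave equation with an $\eps$-dependent coefficient, so it is trivial modulo the redefinition \eqref{eq:eqv2}. It therefore keeps the full $15$-dimensional conformal algebra, because that algebra is independent of the value of the nonzero cubic coefficient. There is one caveat: if $\lambda+\eps a_1+\eps^2a_2=0$ for some isolated $\eps$, the algebra is larger there. That does not affect the statement, which only asks that the conformal algebra be retained.

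The main obstacle is mild: justifying the separation of the $\eps$-coefficients. If $\eps$ were a single fixed value, only the combination $\eps(UF'-3F)+\eps^2(UG'-3G)=0$ would follow. I will read ``for all fixed values of $\eps$'' as holding on an interval of $\eps$, and use that polynomial identity in $\eps$ to split the coefficients.
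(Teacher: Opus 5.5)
Your proposal is correct and follows the paper's proof. The paper likewise reduces the invariance criterion to $UH_U-3H=0$ with $H=\lambda U^3+\eps F+\eps^2G$, splits in $\eps$ to get the two Euler equations, and obtains the converse and the conformal invariance by reading the cubic total nonlinearity as the critical wave equation with an $\eps$-dependent coefficient; your explicit prolongation coefficients $2U_\mu$, $3U_{\mu\nu}$ and your remark about splitting over an interval of $\eps$ just make explicit what the paper leaves implicit.
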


\begin{proof}
Write the equation as $\Boxop U+H(U;\eps)=0$. The prolongation condition for $D$ is $UH_U-3H=0$. Since $H=\lambda U^3+\eps F+\eps^2G$, splitting with respect to $\eps$ gives $UF'-3F=0$ and $UG'-3G=0$, whose solutions are \eqref{eq:exactcubicFG}. Conversely, a cubic total nonlinearity is the conformally invariant critical wave equation with an $\eps$-dependent cubic coefficient.
\end{proof}

\subsection{First order: the Fushchych--Shtelen theorem}

With $U=U_0+\eps U_1+o(\eps)$, the first-order FS system is given by
\begin{subequations}\label{eq:FSfirst}
\begin{align}
\Boxop U_0+\lambda U_0^3&=0,\label{eq:FSfirsta}\\[1ex]
\Boxop U_1+3\lambda U_0^2U_1+F(U_0)&=0.\label{eq:FSfirstb}
\end{align}
\end{subequations}

\begin{proposition}[Fushchych \& Shtelen, 1989]\label{prop:FSfirst}
A point symmetry of \eqref{eq:FSfirst} whose projection to $(x,U_0)$ is \eqref{eq:D0} has
\begin{equation}\label{eq:firsteta}
\eta_1=aU_0+qU_1+b
\end{equation}
with constants $a,q,b$, and $F$ satisfies
\begin{equation}\label{eq:firstODE}
U_0F'(U_0)-(q+2)F(U_0)+2\lambda aU_0^3+3\lambda bU_0^2=0.
\end{equation}
Consequently,
\begin{equation}\label{eq:firstgeneric}
F(U)=A U^{q+2}+\frac{2\lambda a}{q-1}U^3+\frac{3\lambda b}{q}U^2,
\qquad q\neq0,1,
\end{equation}
while the resonant cases are given by
\begin{align}
q=0:\quad &F(U)=A U^2-2\lambda aU^3-3\lambda bU^2\ln|U|,\label{eq:firstq0}\\[1ex]
q=1:\quad &F(U)=A U^3+3\lambda bU^2-2\lambda aU^3\ln|U|.\label{eq:firstq1}
\end{align}
\end{proposition}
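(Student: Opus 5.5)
We apply the classical Lie infinitesimal criterion to the system \eqref{eq:FSfirst}, treating $(U_0,U_1)$ as two dependent variables. The generator has the form
\[
X=-x^\mu\partial_{x^\mu}+U_0\partial_{U_0}+\eta_1(x,U_0,U_1)\partial_{U_1}.
\]
This form is exactly the statement that the projection to $(x,U_0)$ is $D_0$; in particular $\xi^\mu=-x^\mu$ and $\eta_0=U_0$ are prescribed.

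Equation \eqref{eq:FSfirsta} is then automatically invariant by the exact dilation. Indeed, $D_0$ is a symmetry of \eqref{eq:cubicwave}, and $\eta_0$ does not involve $U_1$, so there are no new conditions from the first equation. All information comes from prolonging $X$ to second order and applying it to \eqref{eq:FSfirstb}, then restricting to the solution manifold. To do so we substitute $U_{0,tt}$ from \eqref{eq:FSfirsta} and $U_{1,tt}$ from \eqref{eq:FSfirstb}, and split with respect to the remaining free jet variables.

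Since $\xi^\mu=-x^\mu$, the second prolongation gives $\eta_1^{\mu\nu}=D_\mu D_\nu\eta_1+2U_{1,\mu\nu}$, and therefore
\[
\mathrm{pr}X(\Boxop U_1)=\Box_{\rm tot}\eta_1+2\Boxop U_1 .
\]
Expanding $\Box_{\rm tot}\eta_1$ yields the following terms:
\begin{itemize}[label={}]
\item quadratic terms $\eta_{1,U_1U_1}(U_{1,\mu}U_1^{,\mu})$, $\eta_{1,U_0U_0}(\dots)$, $2\eta_{1,U_0U_1}(U_{0,\mu}U_1^{,\mu})$;
\item mixed terms $2\eta_{1,x^\mu U_a}U_a^{,\mu}$;
\item second-derivative terms $\eta_{1,U_0}\Boxop U_0+\eta_{1,U_1}\Boxop U_1$;
\item the term $\Box\eta_1$.
\end{itemize}

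Splitting in the independent first derivatives $U_{0,\mu},U_{1,\mu}$ proceeds in three steps. First, the coefficients of the quadratic forms give $\eta_{1,U_1U_1}=\eta_{1,U_0U_1}=\eta_{1,U_0U_0}=0$, so $\eta_1$ is affine in $(U_0,U_1)$ with $x$-dependent coefficients. Second, the linear terms give that these coefficients are constant, apart from the $U$-free part $b(x)$. Third, the remaining derivative-free equation, in which $\Boxop U_0$ and $\Boxop U_1$ are replaced via the system, is polynomial in $U_1$ and involves $F(U_0)$.

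The coefficient of $U_1$ gives $6\lambda U_0^2\cdot$ consistency; this should force the coefficient of $U_1$ in $\eta_1$ to be the constant $q$ with no further restriction, after using $\mathrm{pr}X(3\lambda U_0^2U_1)=6\lambda U_0^2U_1+3\lambda U_0^2\eta_1$. The $U_1$-free part then gives, after collecting, a relation $\Box b(x)+U_0F'-(q+2)F+2\lambda aU_0^3+3\lambda b(x)U_0^2=0$. Differentiating with respect to $x$, with $U_0$ independent of $x$ pointwise on the jet space, gives $b_{,\mu}U_0^2=0$, so $b$ is constant. This leaves exactly \eqref{eq:firsteta} and \eqref{eq:firstODE}.

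The main obstacle I expect is this bookkeeping. We must correctly account for the weight $2$ coming from $\xi^\mu=-x^\mu$ in the second prolongation, and for the substitution of $\Boxop U_0=-\lambda U_0^3$ and $\Boxop U_1=-3\lambda U_0^2U_1-F$ on the manifold. This substitution is what produces the combination $-(q+2)F$ and the coefficients $2\lambda a$, $3\lambda b$. I would verify these numerical coefficients by checking the cubic case, $F=a_1U^3$ with $q=1$, against Proposition~\ref{prop:exactdilation}.

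Finally, \eqref{eq:firstODE} is a first-order linear Euler-type ODE, $U F'-(q+2)F=-2\lambda aU^3-3\lambda bU^2$. Its homogeneous solution is $AU^{q+2}$. For particular solutions we try the monomials $U^3$ and $U^2$, which give the coefficients $2\lambda a/(q-1)$ and $3\lambda b/q$ respectively, and hence \eqref{eq:firstgeneric}. These fail precisely when a forcing exponent coincides with $q+2$, i.e.\ at $q=1$ or $q=0$. In those resonant cases we use $U^{q+2}\ln|U|$ instead, since $(U\partial_U-k)(U^k\ln|U|)=U^k$. This gives the logarithmic terms with coefficients $-3\lambda b$ at $q=0$ and $-2\lambda a$ at $q=1$, while the other forcing term is still handled by the monomial formula. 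This yields \eqref{eq:firstq0} and \eqref{eq:firstq1}.
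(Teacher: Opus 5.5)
Your proof is correct and follows the paper's route. The paper obtains \eqref{eq:firsteta}--\eqref{eq:firstODE} by specializing the prolongation-and-splitting argument of Lemma~\ref{lem:secondDE}: an affine $\eta_1$ with constant linear coefficients from the first-derivative splitting, and constancy of the additive part from differentiating the residual in $x$ using $\lambda\neq0$. It then solves the same Euler equation, with resonances at $q=0,1$. Two small precisions: the $U_1$-coefficient of the residual vanishes identically, $-3\lambda(q+2)+6\lambda+3\lambda q=0$; and differentiating in $x^\mu$ actually gives $\partial_\mu\Boxop b+3\lambda b_{,\mu}U_0^2=0$, so you must split in $U_0$ to get $b_{,\mu}=0$.
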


\begin{proof}
The affine form \eqref{eq:firsteta} and the ODE \eqref{eq:firstODE} follow from the more general second-order calculation in Lemma~\ref{lem:secondDE} below after omitting $U_2$ and $G$. Equations \eqref{eq:firstgeneric}--\eqref{eq:firstq1} are the elementary solutions of the Euler equation \eqref{eq:firstODE}.
\end{proof}

To compare with Theorem~1 of Fushchych and Shtelen \cite{Fushchich1989}, multiply the generator by $-1$, identify their leading field $w$ with $U_0$ and their correction $v$ with $U_1$, and set
\[
k=-q,\qquad b_{\rm FS}=-a,\qquad c_{\rm FS}=-b.
\]
Then \eqref{eq:firstgeneric} becomes
\[
F(U)=\frac{2\lambda b_{\rm FS}}{k+1}U^3
+\frac{3\lambda c_{\rm FS}}{k}U^2
+A U^{2-k},
\]
with exactly the two logarithmic cases $k=0,-1$ listed in the 1989 paper. Thus the first-order result here is not merely analogous to the original theorem; it is the same classification in different notation.

If affine mixing of the correction variable is suppressed, $a=b=0$. Writing $N=q+2$ gives the simple normal form
\begin{equation}\label{eq:firstclean}
F(U)=A U^N,
\qquad
D^{(1)}=D_0+(N-2)U_1\partial_{U_1}.
\end{equation}

\subsection{The second-order FS system}

Now set
\begin{equation}\label{eq:FSsecondexp}
U=U_0+\eps U_1+\eps^2U_2+o(\eps^2).
\end{equation}
The second-order FS system is
\begin{subequations}\label{eq:FSsecond}
\begin{align}
E_0&:=\Boxop U_0+\lambda U_0^3=0,\label{eq:E0}\\[1ex]
E_1&:=\Boxop U_1+3\lambda U_0^2U_1+F(U_0)=0,\label{eq:E1}\\[1ex]
E_2&:=\Boxop U_2+3\lambda U_0^2U_2+3\lambda U_0U_1^2
+F'(U_0)U_1+G(U_0)=0.\label{eq:E2}
\end{align}
\end{subequations}

We classify the point symmetries of \eqref{eq:FSsecond} that are \emph{projectable} onto the $(x,U_0)$-space, with projection $D_0$: that is, those whose $x$- and $U_0$-components are independent of $U_1$ and $U_2$ and coincide with the corresponding components of \eqref{eq:D0}. Projectability is the condition under which the second-order problem is a continuation of the first-order one.

\begin{lemma}[Determining equations]\label{lem:secondDE}
Let $D^{(2)}$ be a point symmetry of \eqref{eq:FSsecond}, projectable onto the $(x,U_0)$-space with projection $D_0$ \eqref{eq:D0}. Then $D^{(2)}$ has the form
\begin{equation}\label{eq:secondgen}
\begin{aligned}
D^{(2)}={}&-x^\mu\partial_{x^\mu}+U_0\partial_{U_0}
+(aU_0+qU_1+b)\partial_{U_1}\\[1ex]
&+\bigl(rU_0+hU_1+(2q-1)U_2+d\bigr)\partial_{U_2},
\end{aligned}
\end{equation}
where $a,q,b,r,h,d$ are constants. The constitutive functions satisfy
\begin{align}
U F'-(q+2)F+2\lambda aU^3+3\lambda bU^2&=0,\label{eq:FodeUser}\\[1ex]
U G'-(2q+1)G-hF+2\lambda rU^3+3\lambda dU^2+(aU+b)F'&=0.
\label{eq:GodeUser}
\end{align}
The coefficient of the term linear in $U_1$ in the invariance condition for $E_2$ is the derivative of \eqref{eq:FodeUser} and gives no independent condition.
\end{lemma}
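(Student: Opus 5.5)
The plan is to run the standard Lie infinitesimal algorithm on the system \eqref{eq:FSsecond}, imposing projectability from the start. The ansatz is
\[
X=-x^\mu\partial_{x^\mu}+U_0\partial_{U_0}+\eta_1(x,U_0,U_1,U_2)\partial_{U_1}+\eta_2(x,U_0,U_1,U_2)\partial_{U_2}.
\]
Since $\xi^\mu=-x^\mu$ is linear, the second prolongation takes a simple form. The coefficient of $U_{j,\mu\nu}$ is
\[
\DD_\mu\DD_\nu\eta_j+2U_{j,\mu\nu},
\]
and the coefficient of $U_{0,\mu\nu}$ is $3U_{0,\mu\nu}$, so that $X^{(2)}(\Boxop U_j)=\Boxop_{\rm tot}(\eta_j)+2\Boxop U_j$. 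For $j=0$ this reproduces the invariance of \eqref{eq:E0}, with $X^{(2)}E_0=3E_0$. For $E_1$ and $E_2$, I will impose $X^{(2)}E_j=0$ on the prolonged system. I will do this by substituting $U_{k,tt}$ from each $E_k=0$, which is legitimate because the system is in extended Kovalevskaya form, as noted at the start of this section. Equivalently, I will write $X^{(2)}E_j=\sum_k\mu_{jk}E_k$ with multipliers that depend only on $(x,U)$, since all equations are quasilinear of second order.

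\emph{Structural splitting.} First, expanding $\Boxop_{\rm tot}(\eta_j)$ produces $\sum_k\partial_{U_k}\eta_j\,\Boxop U_k$. Matching the second-derivative terms with the multipliers gives $\mu_{jk}=\partial_{U_k}\eta_j$ for $k\ge1$, together with $\mu_{j0}=\partial_{U_0}\eta_j$. Next I split with respect to the quadratic first-derivative terms $U_{k,\mu}U_l^{,\mu}$. These have coefficients $\partial_{U_k}\partial_{U_l}\eta_j$, apart from the $U_0$-block, which involves $\eta_0=U_0$ and so has vanishing second derivatives. Hence every $\eta_j$ is affine in $(U_0,U_1,U_2)$. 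The terms linear in $U_{k,\mu}$ have coefficients $\partial_{x^\mu}\partial_{U_k}\eta_j$, so the linear coefficients are constants. The remaining $x$-dependent part of $\eta_j$ satisfies an inhomogeneous linear equation, and this will be killed at the algebraic stage below.

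\emph{Algebraic splitting.} What remains is purely algebraic in $U_0,U_1,U_2$. For $E_1$, the multiplier $\partial_{U_2}\eta_1$ multiplies $E_2$, which is the only place where $G$ and $U_1^2$ occur; comparing with $X^{(2)}E_1$, which contains neither, forces $\partial_{U_2}\eta_1=0$. Thus $\eta_1=aU_0+qU_1+b$, after the $x$-dependent remainder $\beta(x)$ is shown to be constant. To see this, note that $\beta$ enters through $\Boxop\beta+3\lambda U_0^2\beta$ plus dilation weights, and splitting in $U_0$ gives $\beta$ equal to a constant times a weight condition. Collecting the coefficients of $U_1$ in $X^{(2)}E_1$ yields the consistency $(q+2)=\mu_{11}$. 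The $U_1$-free part then gives \eqref{eq:FodeUser}: the $2\lambda aU^3$ term arises from $3\lambda U_0^2\cdot aU_0$ minus the multiplier times $\lambda U_0^3$, and the $3\lambda bU^2$ term arises from $3\lambda U_0^2 b$.

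For $E_2$ I proceed in the same way. The coefficient of $U_1^2$ compares $3\lambda U_0\cdot 2qU_1^2+3\lambda U_1^2U_0$ from the dilation with the multiplier $\mu_{22}$. This fixes $\mu_{22}=2q+1$, hence $\partial_{U_2}\eta_2=2q-1$, and shows that $\eta_2$ has no $U_1U_0$-type freedom beyond $hU_1$. The $U_1$-free, $U_2$-free part yields \eqref{eq:GodeUser}. Here the term $(aU+b)F'$ comes from $F'(U_0)\eta_1$ evaluated at $U_1=0$, and $-hF$ comes from the multiplier $\mu_{21}=h$ acting on $E_1$. Finally, the coefficient of the term linear in $U_1$ in $X^{(2)}E_2$ contains $U_0F''+F'$ together with $qF'$ and $-(2q+1)F'$, plus the $6\lambda U_0\eta_1$ terms. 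I will check that this combination equals $\frac{d}{dU}$ of \eqref{eq:FodeUser}, so that it imposes no new condition.

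The step I expect to be the main obstacle is the bookkeeping of the multipliers $\mu_{jk}$ in the triangular system. In particular, ruling out $\partial_{U_2}\eta_1\neq0$ and the $x$-dependent parts cleanly requires care with the fact that $F$ and $G$ are arbitrary at this stage. My first attempt will be to split using the independent jet coordinates $U_{k,\mu}$ and $U_{k,\mu\nu}$ (with $\mu\nu\neq tt$) before any constitutive splitting, and only afterwards to separate the coefficients in $U_1$ and $U_2$.
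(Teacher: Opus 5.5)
Your proposal is correct and follows essentially the paper's own route. The paper also prolongs with $\xi^\mu$ fixed and replaces $\Boxop U_k$ by $-H^k$ on solutions, which is equivalent to your multiplier formulation. It then splits in the free first-order jets to make each $\eta_j$ affine with constant coefficients plus an $x$-dependent part. It kills that part by differentiating in $x^\mu$, which removes the arbitrary $F$ and leaves $\partial_\mu\Boxop\beta+3\lambda U_0^2\partial_\mu\beta=0$. Finally it reads off $\partial_{U_2}\eta_1=0$ and $\partial_{U_2}\eta_2=2q-1$ from the $U_1^2$ coefficients, with the $U_1$-free parts giving \eqref{eq:FodeUser}--\eqref{eq:GodeUser}. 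When you write it out, correct three small bookkeeping slips: the diagonal multiplier is $\mu_{jj}=\partial_{U_j}\eta_j+2$ (which is what you actually use for $\mu_{11}=q+2$ and $\mu_{22}=2q+1$); the constancy of $\beta$ should come from differentiating in $x$ rather than ``splitting in $U_0$'' (not possible while $F$ is arbitrary); and the $U_1$-linear coefficient of the $E_2$ condition contains no stand-alone $F'$, being $U_0F''+qF'-(2q+1)F'+6\lambda U_0(aU_0+b)=U_0F''-(q+1)F'+6\lambda aU_0^2+6\lambda bU_0$.
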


\begin{proof}
It is convenient to work with the opposite field $-D^{(2)}$, whose
independent-variable part is $x^\mu\partial_{x^\mu}$ and whose $U_0$-component
is $-U_0$. Write $Y=(Y^0,Y^1,Y^2)=(U_0,U_1,U_2)$, so that this field is
\[
X=x^\mu\partial_{x^\mu}+\eta^A(x,Y)\partial_{Y^A},
\qquad
\eta^0=-U_0 .
\]
Projectability prescribes $\eta^0$ and the $x$-part; the
components $\eta^1,\eta^2$ are functions of $x$ and $Y$ whose form is derived below.

\emph{Step 1: the semilinear identity.} For a semilinear system
$\Boxop Y^A+H^A(Y)=0$, second prolongation of $X$ and restriction to solutions
give
\begin{equation}\label{eq:semilinearidentity}
\Boxop_x\eta^A
+2\eta^A_{,x^\mu B}\,Y^{B\mu}
+\eta^A_{,BC}\,Y^B_\mu Y^{C\mu}
+2H^A-\eta^A_{,B}H^B+\eta^BH^A_{,B}=0,
\end{equation}
where $\Boxop_x$ differentiates the explicit $x$-dependence only. The term
$2H^A$ arises because $\xi^\mu=x^\mu$ contributes $-2Y^A_{\mu\nu}$ to the second
prolongation coefficient.

\emph{Step 2: $\eta^A$ is affine in $Y$ with constant coefficients.} The system
\eqref{eq:FSsecond} constrains second derivatives only, so on its solution set
the first-order jet coordinates $Y^B_\mu$ remain free, and the functions $1$,
$Y^{B\mu}$ and $Y^B_\mu Y^{C\mu}$ are linearly independent there. Splitting
\eqref{eq:semilinearidentity} with respect to them gives $\eta^A_{,BC}=0$ and
$\eta^A_{,x^\mu B}=0$, whence
\[
\eta^A=k^A_B\,Y^B+g^A(x),
\qquad
k^A_B=\const ,
\]
and \eqref{eq:semilinearidentity} reduces to
\begin{equation}\label{eq:reducedidentity}
\Boxop_xg^A+2H^A-k^A_BH^B+\bigl(k^B_CY^C+g^B\bigr)H^A_{,B}=0 .
\end{equation}
Differentiating \eqref{eq:reducedidentity} with respect to $x^\mu$, the only
$x$-dependence being through $g$, yields
\[
\partial_\mu\Boxop_xg^A+(\partial_\mu g^B)\,H^A_{,B}(Y)=0
\qquad\text{for all }Y .
\]
For the system \eqref{eq:FSsecond} one has $g^0=0$, since $\eta^0=-U_0$ is
prescribed. Taking $A=1$ and using $H^1_{,1}=3\lambda U_0^2$, $H^1_{,2}=0$
leaves $\partial_\mu\Boxop_xg^1+3\lambda U_0^2\,\partial_\mu g^1=0$; as $1$ and
$U_0^2$ are linearly independent and $\lambda\neq0$, this gives
$\partial_\mu g^1=0$. Taking $A=2$ and using $H^2_{,2}=3\lambda U_0^2$ then
gives $\partial_\mu g^2=0$ in the same way. Hence every $g^A$ is constant, and
\eqref{eq:reducedidentity} becomes
\begin{equation}\label{eq:constantidentity}
2H^A-\eta^A_{,B}H^B+\eta^BH^A_{,B}=0 .
\end{equation}

\emph{Step 3: splitting.} For the system \eqref{eq:FSsecond},
\begin{align*}
H^0&=\lambda U_0^3,\\[1ex]
H^1&=3\lambda U_0^2U_1+F(U_0),\\[1ex]
H^2&=3\lambda U_0^2U_2+3\lambda U_0U_1^2+F'(U_0)U_1+G(U_0),
\end{align*}
and we write the affine components as
\[
\eta^1=a'U_0+q'U_1+\rho U_2+b',
\qquad
\eta^2=r'U_0+h'U_1+\rho_2U_2+d' .
\]
Because the system \eqref{eq:FSsecond} constrains only second derivatives, the
variables $U_0,U_1,U_2$ are free coordinates on its solution manifold, so
condition \eqref{eq:constantidentity} must hold identically in
$(U_0,U_1,U_2)$ and may be split with respect to the monomials in $U_1$ and
$U_2$.

For $A=0$, using $\eta^0=-U_0$, $\eta^0_{,0}=-1$, and
$H^0_{,0}=3\lambda U_0^2$, the condition is
\[
2H^0-\eta^0_{,0}H^0+\eta^0H^0_{,0}
=2\lambda U_0^3+\lambda U_0^3-3\lambda U_0^3=0,
\]
so it holds identically and yields nothing.

For $A=1$, the derivatives of $H^1$ are $H^1_{,0}=6\lambda U_0U_1+F'(U_0)$,
$H^1_{,1}=3\lambda U_0^2$, and $H^1_{,2}=0$, so the condition reads, written out
in full,
\begin{equation}\label{eq:A1full}
\begin{aligned}
&2H^1-\bigl(a'H^0+q'H^1+\rho H^2\bigr)
-U_0\bigl(6\lambda U_0U_1+F'(U_0)\bigr)\\[1ex]
&\qquad+\bigl(a'U_0+q'U_1+\rho U_2+b'\bigr)\,3\lambda U_0^2=0 .
\end{aligned}
\end{equation}
The variable $U_2$ enters \eqref{eq:A1full} in exactly two places: the term
$-\rho H^2$ contributes $-3\lambda\rho\,U_0^2U_2$, and the last group
contributes $+3\lambda\rho\,U_0^2U_2$. These two contributions cancel, so
\eqref{eq:A1full} is in fact free of $U_2$, and the $U_2$-dependence imposes no
condition on $\rho$. The terms of \eqref{eq:A1full} that still carry $\rho$ all
come from $-\rho H^2$; they are
\[
-\rho\bigl(3\lambda U_0U_1^2+F'(U_0)\,U_1+G(U_0)\bigr).
\]
Every other term of \eqref{eq:A1full} is at most linear in $U_1$, so the full
coefficient of $U_1^2$ in \eqref{eq:A1full} is $-3\lambda\rho\,U_0$, and the
splitting forces $\rho=0$.

Now collect the coefficient of $U_1^2$ in the $A=2$ condition, taking
$\rho=0$, that is, $\eta^1=a'U_0+q'U_1+b'$. Terms quadratic in $U_1$ arise
from exactly four sources: the term $2H^2$ contributes $6\lambda U_0U_1^2$;
the term $-\rho_2H^2$ contributes $-3\lambda\rho_2\,U_0U_1^2$; the term
$\eta^0H^2_{,0}=-U_0\bigl(6\lambda U_0U_2+3\lambda U_1^2+F''(U_0)U_1+G'(U_0)\bigr)$
contributes $-3\lambda U_0U_1^2$; and the term
$\eta^1H^2_{,1}=\eta^1\bigl(6\lambda U_0U_1+F'(U_0)\bigr)$ contributes
$6\lambda q'\,U_0U_1^2$. Their sum is
\[
3\lambda U_0\,(2q'+1-\rho_2)\,U_1^2 ,
\]
whence $\rho_2=2q'+1$.

Multiplying the generator by $-1$ sends $(a',q',b',r',h',d')$ to
$(-a,-q,-b,-r,-h,-d)$ and the coefficient of $U_2$ in the second component to
$-\rho_2=2q-1$, which gives \eqref{eq:secondgen}. The part of the $A=1$
condition free of $U_1$ is then \eqref{eq:FodeUser}, and the part of the $A=2$
condition free of $U_1$ is \eqref{eq:GodeUser}. The remaining coefficient of
$U_1$ in the $A=2$ condition is
\[
U F''-(q+1)F'+6\lambda aU^2+6\lambda bU,
\]
which is the derivative of \eqref{eq:FodeUser} by $U$, and therefore imposes
no additional constraints.
\end{proof}

\subsection{Solution of the determining system}

Set
\begin{equation}\label{eq:Ndef}
N=q+2.
\end{equation}
For the nonresonant values
\begin{equation}\label{eq:nonresvalues}
N\neq2,\quad N\neq\frac52,\quad N\neq3,
\end{equation}
solving \eqref{eq:FodeUser}--\eqref{eq:GodeUser} yields the following collected form.

\begin{proposition}[Generic family]\label{prop:genericfamily}
Under \eqref{eq:nonresvalues},
\begin{equation}\label{eq:FgenericN}
F(U)=A U^N+\frac{2\lambda a}{N-3}U^3+\frac{3\lambda b}{N-2}U^2,
\end{equation}
and
\begin{align}
G(U)={}&B U^{2N-3}
+\frac{A(Na-h)}{N-3}U^N
+\frac{ANb}{N-2}U^{N-1}\nonumber\\[1ex]
&+\frac{\lambda\bigl((N-3)r-ha+3a^2\bigr)}{(N-3)^2}U^3\nonumber\\[1ex]
&+\left[
\frac{3\lambda d}{2N-5}
-\frac{3\lambda hb}{(2N-5)(N-2)}
+\frac{6\lambda ab}{(N-3)(N-2)}
\right]U^2\nonumber\\[1ex]
&+\frac{3\lambda b^2}{(N-2)^2}U.
\label{eq:GgenericN}
\end{align}
\end{proposition}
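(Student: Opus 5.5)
The plan is to solve the two Euler-type equations \eqref{eq:FodeUser}--\eqref{eq:GodeUser} of Lemma~\ref{lem:secondDE} in turn. Both are first-order linear ODEs of the form $UY'-\kappa Y=S(U)$, where $S$ is a finite linear combination of real powers of $U$. The working tool is the elementary observation that
\[
(U\partial_U-\kappa)U^k=(k-\kappa)U^k .
\]
Hence for every exponent $k\neq\kappa$ a source monomial $cU^k$ has the particular solution $\frac{c}{k-\kappa}U^k$. The homogeneous solutions on an interval of the $U$-axis are $CU^\kappa$. By linearity, the general solution is the homogeneous term plus the sum of the monomial particular solutions, provided no source exponent equals $\kappa$. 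Checking this condition is exactly what the nonresonance hypothesis \eqref{eq:nonresvalues} is for.

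\emph{Step 1 (the function $F$).} With $N=q+2$ from \eqref{eq:Ndef}, equation \eqref{eq:FodeUser} reads
\[
UF'-NF=-2\lambda aU^3-3\lambda bU^2 .
\]
The source exponents are $3$ and $2$, and $\kappa=N$. Since $N\neq3,2$, the rule above gives \eqref{eq:FgenericN}, with the free homogeneous constant denoted $A$. This is just the first-order result \eqref{eq:firstgeneric} rewritten in terms of $N$.

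\emph{Step 2 (the function $G$).} Using $2q+1=2N-3$, rewrite \eqref{eq:GodeUser} as
\[
UG'-(2N-3)G=hF-(aU+b)F'-2\lambda rU^3-3\lambda dU^2 .
\]
Next, substitute \eqref{eq:FgenericN} into the right-hand side and expand it into monomials. The term $hF$ contributes $U^N$, $U^3$ and $U^2$. The term $aUF'$ contributes $U^N$, $U^3$ and $U^2$. The term $bF'$ contributes $U^{N-1}$, $U^2$ and $U^1$. For each exponent $k\in\{N,\,N-1,\,3,\,2,\,1\}$, divide the collected coefficient by
\[
k-(2N-3)\in\{3-N,\;2-N,\;2(3-N),\;5-2N,\;2(2-N)\}.
\]
Under \eqref{eq:nonresvalues} all five divisors are nonzero. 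The homogeneous term is $BU^{2N-3}$.

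A sample check confirms the pattern. The $U^N$ coefficient of the source is $A(h-Na)$; dividing by $3-N$ gives $A(Na-h)/(N-3)$, as in \eqref{eq:GgenericN}. The $U^3$ coefficient is
\[
\frac{2\lambda ah}{N-3}-\frac{6\lambda a^2}{N-3}-2\lambda r ;
\]
dividing by $2(3-N)$ gives $\lambda\bigl((N-3)r-ha+3a^2\bigr)/(N-3)^2$. The $U^2$ and $U^1$ coefficients are handled the same way, and so is $U^{N-1}$, whose coefficient $-bNA$ divided by $2-N$ gives $ANb/(N-2)$.

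\emph{Main obstacle.} The only delicate point is verifying that no source exponent coincides with the homogeneous exponent $2N-3$ and that no denominator vanishes; otherwise a logarithm would appear. The possible coincidences are $N=2N-3$ (i.e.\ $N=3$), $N-1=2N-3$ (i.e.\ $N=2$), $3=2N-3$ (i.e.\ $N=3$), $2=2N-3$ (i.e.\ $N=5/2$) and $1=2N-3$ (i.e.\ $N=2$), all of which are excluded by \eqref{eq:nonresvalues}. Coincidences among the source exponents themselves, such as $N-1=3$ when $N=4$, merely merge terms. Because the construction is linear, such mergers do not affect the validity of the formula. Finally, one collects the $U^2$ bracket carefully. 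It receives contributions $3\lambda d$, $-3\lambda hb/(N-2)$ and a term from $a\cdot 2\cdot 3\lambda b/(N-2)$ combined with $b\cdot 3\cdot 2\lambda a/(N-3)$, all divided by $2N-5$ with the appropriate sign. This bookkeeping must be carried out to match the displayed bracket in \eqref{eq:GgenericN}.
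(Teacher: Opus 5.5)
Your proposal is correct and follows essentially the paper's own route: the paper obtains \eqref{eq:FgenericN}--\eqref{eq:GgenericN} by integrating the Euler equations \eqref{eq:FodeUser}--\eqref{eq:GodeUser} monomial by monomial, and \eqref{eq:nonresvalues} excludes exactly the coincidences of source and homogeneous exponents that you list. The $U^2$ bookkeeping you deferred closes via $\frac{1}{N-2}+\frac{1}{N-3}=\frac{2N-5}{(N-2)(N-3)}$, which cancels the divisor $2N-5$ in the $ab$ contribution and gives exactly the displayed $\frac{6\lambda ab}{(N-3)(N-2)}$; likewise, the $U$ coefficient is $-\frac{6\lambda b^2}{N-2}$ divided by $2(2-N)$, i.e.\ $\frac{3\lambda b^2}{(N-2)^2}$.
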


The admissible family is therefore a sum of six power terms, with coincident exponents at some parameter values. The constants $a,b,r,h,d$ are not parts of additional nonlinear structure; as shown below, terms involving these constants may be removed by equivalence transformations.

\subsection{Formal equivalence and normal-form classification}

\begin{definition}[Formal equivalence through second order]\label{def:formaleq}
Two pairs $(F,G)$ are called formally equivalent through second order if the
corresponding one-parameter equation families \eqref{eq:wavefull} are related, modulo
$o(\eps^2)$, by a near-identity transformation of the perturbation parameter,
the cubic coefficient, and the dependent variable of the type listed below.
\end{definition}

Systematic methods for determining equivalence groups of classes of
differential equations are described in
\cite{Bihlo2015equivalence,cheviakov2017symbolic}. In this paper, the
equivalence relation is generated by the three transformation families
below, and the normal forms in Theorem~\ref{thm:classification} are
understood with respect to this relation:
\begin{align}
\widetilde\eps&=\eps+c\eps^2,
&\widetilde F&=F,
&\widetilde G&=G-cF;\label{eq:eqv1}\\[1ex]
\widetilde\lambda&=\lambda+c_1\eps+c_2\eps^2,
&\widetilde F&=F-c_1U^3,
&\widetilde G&=G-c_2U^3;\label{eq:eqv2}\\[1ex]
U&=\widetilde U+c_1\eps+c_2\eps^2,
&\widetilde F&=F+3\lambda c_1U^2,
\nonumber\\[1ex]
&&\widetilde G&=G+c_1F'(U)+3\lambda c_2U^2+3\lambda c_1^2U.
\label{eq:eqv3}
\end{align}
The first two are immediate. For the third, $\Boxop U=\Boxop\widetilde U$
under the constant shift, while the expansion of the nonlinear terms through
order $\eps^2$ is
\begin{align}
&\lambda(\widetilde U+c_1\eps+c_2\eps^2)^3
+\eps F(\widetilde U+c_1\eps+c_2\eps^2)
+\eps^2G(\widetilde U+c_1\eps+c_2\eps^2)\nonumber\\[1ex]
&\quad=\lambda\widetilde U^3
+\eps\bigl[F(\widetilde U)+3\lambda c_1\widetilde U^2\bigr]
+\eps^2\bigl[G(\widetilde U)+c_1F'(\widetilde U)
+3\lambda c_2\widetilde U^2+3\lambda c_1^2\widetilde U\bigr]
+o(\eps^2),\label{eq:eqv3check}
\end{align}
which is \eqref{eq:eqv3}.
In particular, a term proportional to $U^3$ in $F$ is absorbed into the cubic
coefficient, a term proportional to $U^2$ in $F$ is removed by a constant shift
of $U$, and a multiple of $F$ in $G$ is removed by reparametrizing $\eps$; the
second-order parts of the same transformations remove the induced
$U^3,U^2,$ and $U$ terms in $G$. Logarithms appear when a forcing exponent of an
Euler equation coincides with its homogeneous exponent.

\begin{theorem}[Second-order FS scale-invariance classification]\label{thm:classification}
Assume $F$ is nonzero modulo \eqref{eq:eqv1}--\eqref{eq:eqv3}. Up to these equivalence transformations, the dilation \eqref{eq:D0} is inherited by the second-order FS system, as a point symmetry projectable onto the $(x,U_0)$-space with projection $D_0$, if and only if one of the following cases holds. (Constants $A\neq0$, $B$, and $B_0$ are arbitrary.)

\begin{enumerate}[label=\textnormal{(\roman*)}]
\item For $N\notin\{2,\tfrac52,3\}$,
\beq\label{eq:classgeneric}
\barr
F(U) =A U^N,\qquad G(U)=B U^{2N-3},\\[1ex]
D^{(2)}=-x^\mu\partial_{x^\mu}+U_0\partial_{U_0}
+(N-2)U_1\partial_{U_1}+(2N-5)U_2\partial_{U_2}.
\earr
\eeq

\item For the second-order resonance $N=\tfrac52$,
\beq\label{eq:classfivehalf}
\barr
F(U) =A U^{5/2},\qquad
 G(U) =B U^2\ln|U|,\\[1ex]
D^{(2)}=-x^\mu\partial_{x^\mu}+U_0\partial_{U_0}
+\dfrac12U_1\partial_{U_1}-\dfrac{B}{3\lambda}\partial_{U_2}.
\earr
\eeq

\item For the first-order cubic resonance,
\beq
F(U) =A U^3\ln|U|, \qquad
 G(U) =B U^3\ln^2|U|,\label{eq:classcubiclog}
\eeq
with
\[
\barr
D^{(2)} = -x^\mu\partial_{x^\mu}+U_0\partial_{U_0}
+\left(U_1-\dfrac{A}{2\lambda}U_0\right)\partial_{U_1}\\[1ex]
 \qquad +\left[
U_2+\dfrac{A^2}{4\lambda^2}U_0
-\dfrac{3A^2-4\lambda B}{2\lambda A}U_1
\right]\partial_{U_2}.
\earr
\]

\item For the first-order quadratic resonance,
\beq\label{eq:classquadraticlog}
F(U) =A U^2\ln|U|,\qquad G(U) =\dfrac{A^2}{3\lambda} \left(U\ln^2|U|+U\ln|U|\right)+B_0U,
\eeq
with
\[
D^{(2)}=-x^\mu\partial_{x^\mu}+U_0\partial_{U_0}
-\dfrac{A}{3\lambda}\partial_{U_1}-U_2\partial_{U_2}.
\]
\end{enumerate}
\end{theorem}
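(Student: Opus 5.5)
The proof starts from Lemma~\ref{lem:secondDE}. Every projectable symmetry with projection $D_0$ has the form \eqref{eq:secondgen}, and it exists exactly when the two linear Euler-type equations \eqref{eq:FodeUser}--\eqref{eq:GodeUser} admit constants $(a,q,b,r,h,d)$. The classification therefore reduces to three tasks: solve these ODEs in full, including the resonant cases, then remove the inessential constants with \eqref{eq:eqv1}--\eqref{eq:eqv3}, and finally confirm that each normal form is reached and actually admits the displayed generator. The last step is a direct substitution into \eqref{eq:FodeUser}--\eqref{eq:GodeUser}; it shows, for instance, that in case (ii) $h=0$, $d=-B/(3\lambda)$, and that in case (iv) $q=0$, $b=-A/(3\lambda)$, $2q-1=-1$.

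\textbf{Solving the determining system.} Equation \eqref{eq:FodeUser} has homogeneous exponent $N=q+2$ and forcing exponents $3$ and $2$. Hence $F$ is given by \eqref{eq:FgenericN} for $N\ne2,3$, and by the logarithmic forms \eqref{eq:firstq0}--\eqref{eq:firstq1} at $N=2,3$. Substituting $F$ into \eqref{eq:GodeUser} gives an Euler equation for $G$ with homogeneous exponent $2q+1=2N-3$. Its forcing exponents are $N$, $N-1$, $3$, $2$, $1$, together with logarithmic forcings in the resonant cases. A logarithm in $G$ appears exactly when $2N-3$ coincides with one of these forcing exponents. Checking the coincidences one by one: $2N-3=N$ gives $N=3$, and $2N-3=N-1$ gives $N=2$, both already first-order resonant. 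The coincidences $2N-3=2$ and $2N-3=1$ give $N=\tfrac52$ and $N=2$. The remaining coincidence $2N-3=3$ also gives $N=3$. So the only new resonance is $N=\tfrac52$, and its logarithm comes only from the $U^2$ forcing term with coefficient $3\lambda d-3\lambda hb/(N-2)+\dots$. Proposition~\ref{prop:genericfamily} covers the nonresonant case. In the other three cases, $N=\tfrac52$, $N=3$ ($q=1$) and $N=2$ ($q=0$), I would solve the ODE by variation of constants, $G=U^{2N-3}\int U^{-2N+2}(\cdots)\,dU$. This produces $\ln$ and $\ln^2$ terms where a logarithmic forcing meets a homogeneous exponent.

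\textbf{Normalization.} I would use \eqref{eq:eqv2} to kill $U^3$ terms, \eqref{eq:eqv3} to kill $U^2$ terms in $F$ and the induced $U^2$ and $U$ terms in $G$, and \eqref{eq:eqv1} to kill a multiple of $F$ in $G$. In the generic case this removes every term except $AU^N$ and $BU^{2N-3}$. Once the constants are removed, the generator becomes diagonal, as in \eqref{eq:classgeneric}. At $N=\tfrac52$ the term $BU^{2N-3}=BU^2$ is itself removable, and only the logarithm survives. At $N=3$ with a nonzero logarithm in $F$, so $a\ne0$, the forcing $-hF+(aU+b)F'$ contains $U^3\ln$ terms. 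Integrating them gives $U^3\ln^2$ with coefficient $\propto a^2$ plus a free part, so $B$ is arbitrary; $h$ is then fixed in terms of $A,B$, which matches the displayed $U_1$-coefficient. If instead $a=0$ at $N=3$, then $F=AU^3$ is removable and excluded by hypothesis. At $N=2$ we need $b\ne0$, and I expect $G$ to be forced into the $A^2$-specific shape \eqref{eq:classquadraticlog}. Here the $U$ term is not removable by the listed shifts once $F$ contains a log, because $c_1F'$ then introduces $U\ln U$ terms; this explains the $B_0U$ term.

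\textbf{Main obstacle.} The main obstacle is the resonant bookkeeping at $N=2$ and $N=3$. There one has to show precisely which terms of $G$ can be removed by the equivalences, which are forced, and which constants remain free. The subtle point is that the shift in \eqref{eq:eqv3} acts on the logarithmic $F$ through $c_1F'$. This creates new $U^2$, $U\ln$ and related terms, so the normalizations have to be performed in a definite order: first fix $F$, then use the remaining freedom (the $c_2$-parts and $c$) on $G$. I would carry this out by writing the general $G$ explicitly in each resonant case and then matching its coefficients against the transformation formulas. For the converse direction, I would substitute each normal form into \eqref{eq:FodeUser}--\eqref{eq:GodeUser} with the stated constants, which is routine.
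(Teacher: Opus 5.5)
Your proposal is correct and follows essentially the same route as the paper. Both reduce via Lemma~\ref{lem:secondDE} to the Euler equations \eqref{eq:FodeUser}--\eqref{eq:GodeUser}, identify the exceptional exponents $N\in\{2,\tfrac52,3\}$, normalize $F$ first and then use the remaining freedom on $G$, and verify the generators by substitution. The only difference is that the paper normalizes $F$ before integrating the $G$ equation, so $a=b=0$ from the start in the generic case. One small point on case (iv): the cleaner reason $B_0U$ survives is that it is the free homogeneous solution (exponent $2q+1=1$), and the normal form $F=AU^2\ln|U|$ forces $c_1=0$ in \eqref{eq:eqv3}, not that $c_1F'$ would produce $U\ln|U|$ terms.
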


\begin{proof}
The first Euler equation \eqref{eq:FodeUser} has the generic power solution and two resonances $N=2,3$, reproducing Proposition~\ref{prop:FSfirst}. After the removable $U^2$ and $U^3$ terms are factored out, the nontrivial first-order representatives are $AU^N$ with $N\neq2,3$, $AU^2\ln|U|$, and $AU^3\ln|U|$.

For $F=AU^N$ and $a=b=0$, equation \eqref{eq:GodeUser} is
\[
UG'-(2N-3)G=hAU^N-2\lambda rU^3-3\lambda dU^2.
\]
When $N\neq\tfrac52,3$, the particular solutions are removable term by term:
the $U^N$ term, proportional to $F$, is removed by the parameter
reparametrization \eqref{eq:eqv1}; the $U^3$ term by the $\eps^2$-part of the
cubic-coefficient redefinition \eqref{eq:eqv2}; the $U^2$ term by the
$\eps^2$-part of the field shift \eqref{eq:eqv3}; and the $U$ term induced by
that shift is removed simultaneously as part of the same transformation. This
leaves the homogeneous term $BU^{2N-3}$, and a logarithm survives exactly when
a forcing exponent of the Euler equation coincides with the homogeneous
exponent. At $N=\tfrac52$, the $U^2$ forcing is resonant and produces $BU^2\ln|U|$, with $d=-B/(3\lambda)$.

For $F=AU^3\ln|U|$, equation \eqref{eq:FodeUser} fixes $q=1$, $a=-A/(2\lambda)$, and $b=0$. Substitution in \eqref{eq:GodeUser} shows that the only nonremovable term is $BU^3\ln^2|U|$; matching its logarithmic coefficient fixes the displayed $U_1$ coefficient in $\eta_2$, while the constant $U^3$ term fixes the displayed $U_0$ coefficient.

For $F=AU^2\ln|U|$, one has $q=0$, $a=0$, and $b=-A/(3\lambda)$. The resonant homogeneous exponent of the $G$ equation is $1$, and direct integration gives
\[
\frac{A^2}{3\lambda}
\left(U\ln^2|U|+U\ln|U|\right)+B_0U.
\]
All remaining terms are removable by \eqref{eq:eqv1}--\eqref{eq:eqv3}. Direct substitution into \eqref{eq:FodeUser}--\eqref{eq:GodeUser} verifies each listed generator.
\end{proof}

The resonances identified in Theorem~\ref{thm:classification} are resonances
of the Euler determining equations \eqref{eq:FodeUser}--\eqref{eq:GodeUser}
for the constitutive functions. When the classified symmetries are used for
reduction in Section~\ref{sec:waves}, a second set of Euler-type equations
arises, namely the invariant-surface conditions for the correction
coefficients; their resonances are distinct in nature but occur at the same
exponents, as shown in Proposition~\ref{prop:solres}.

\begin{corollary}[Strictness in the point-symmetry category]\label{cor:strict}
Let $N\notin\{2,\tfrac52,3\}$ and let $F,G$ be as in \eqref{eq:classgeneric}.
Then $D^{(2)}$ is an exact point symmetry of the FS system
\eqref{eq:FSsecond}, but its point characteristic is not the literal
coefficient expansion \eqref{eq:theta} of any approximate point
characteristic. Consequently the inclusion of Corollary~\ref{cor:inclusion} is
strict at $p=2$ in the point-symmetry category, modulo point characteristics
vanishing on the FS solution manifold; no claim is made in the larger quotient
of all local characteristics.
\end{corollary}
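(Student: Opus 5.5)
The plan is to treat the two assertions of Corollary~\ref{cor:strict} separately. The symmetry assertion needs no new work. The non-membership assertion follows by comparing coefficients at order $\eps^1$ through the explicit formulas \eqref{eq:theta12}.

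\emph{Exactness.} That $D^{(2)}$ is a point symmetry of \eqref{eq:FSsecond} is case (i) of Theorem~\ref{thm:classification}. The check is direct: with $a=b=r=h=d=0$, $q=N-2$ and $F=AU^N$, $G=BU^{2N-3}$, equation \eqref{eq:FodeUser} reads $UF'-NF=0$ and \eqref{eq:GodeUser} reads $UG'-(2N-3)G=0$, and both hold identically. Lemma~\ref{lem:secondDE} then shows that the generator \eqref{eq:secondgen} with these constants leaves the system invariant.

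\emph{Point characteristic of $D^{(2)}$.} Using $\varphi^a=\eta^a-\xi^iu^a_i$, the characteristic of $D^{(2)}$ is
\[
\Theta^{D}=\bigl(U_0+x^\mu U_{0,\mu},\;(N-2)U_1+x^\mu U_{1,\mu},\;(2N-5)U_2+x^\mu U_{2,\mu}\bigr).
\]
Suppose $\varphi\approx_2\varphi_0+\eps\varphi_1+\eps^2\varphi_2$ with each $\varphi_j[u]=\eta_j(x,u)-\xi_j^\mu(x,u)u_\mu$ a point characteristic, and suppose its coefficient expansion $\Theta$ agrees with $\Theta^{D}$ modulo functions vanishing on the FS solution manifold.

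\emph{Order $0$.} Here $\Theta_0=\varphi_0[U_0]$ depends only on $(x,U_0,\partial U_0)$. On the solution manifold of \eqref{eq:FSsecond} the variables $x$, $U_0$ and $U_{0,\mu}$ are free coordinates, since only second derivatives are constrained (as used in Step~2 of Lemma~\ref{lem:secondDE}). Hence $\varphi_0[u]=u+x^\mu u_\mu$ identically.

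\emph{Order $1$.} By \eqref{eq:theta12}, $\Theta_1=\delta_1\varphi_0[U_0]+\varphi_1[U_0]=U_1+x^\mu U_{1,\mu}+\varphi_1[U_0]$. Matching with $\Theta^{D}_1$ requires
\[
\varphi_1[U_0]-(N-3)U_1=0
\]
on the FS solution manifold. The left side involves $U_1$ only through the term $-(N-3)U_1$, and $U_1$ is a free coordinate there, independent of the $U_0$-jet. Differentiating in $U_1$ therefore gives $N-3=0$, which contradicts $N\ne 3$. So no such $\varphi$ exists, and the inclusion of Corollary~\ref{cor:inclusion} is strict at $p=2$ in the point category.

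The only delicate point is the justification that $x$, $U_0$, $U_{0,\mu}$ and $U_1$ are genuinely independent coordinates on the prolonged FS manifold, so that identities holding there can be split. I would take this from the Kovalevskaya-form structure established before Lemma~\ref{lem:prop}: each equation is solved for $U_{j,tt}$, so all zeroth- and first-order jet coordinates are parametric. The second-order component is never needed; the obstruction already appears at order $\eps^1$, reflecting the fact that the $U_1$-weight $N-2$ differs from the weight $1$ inherited from $\varphi_0$.
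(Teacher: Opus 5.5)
Your proposal is correct and follows the paper's proof. You compute the characteristic of $D^{(2)}$, use the order-$0$ component of \eqref{eq:theta12} to force $\varphi_0=u+x^\mu u_\mu$, and then reach the impossible identity $\varphi_1[U_0]=(N-3)U_1$ at order $\eps$, where $U_1$ is an independent jet coordinate. The only difference is presentational. You impose the matching modulo the FS solution manifold and split it using the freeness of the zeroth- and first-order jet coordinates, whereas the paper argues with literal equality and then observes that a point characteristic vanishing on that manifold vanishes identically; this is the same fact. The justification you want is the Kovalevskaya-form discussion at the start of Section~\ref{sec:wave}, or Step~2 of Lemma~\ref{lem:secondDE}; the lemma labelled \texttt{lem:prop} that you cite does not exist.
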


\begin{proof}
The characteristic of $D^{(2)}$ has components
\[
\Theta_0=U_0+x^\mu U_{0,\mu},
\qquad
\Theta_1=(N-2)U_1+x^\mu U_{1,\mu},
\qquad
\Theta_2=(2N-5)U_2+x^\mu U_{2,\mu}.
\]
Suppose $\Theta=\mathscr C(\varphi)$ for an approximate characteristic
$\varphi=\varphi_0+\eps\varphi_1+\eps^2\varphi_2$ on the original jet space. The
first component of \eqref{eq:theta12} gives $\varphi_0[U_0]=\Theta_0$, hence
$\varphi_0(x,u,u_\mu)=u+x^\mu u_\mu$ as a differential function, and therefore
$\delta_1\varphi_0[U_0]=U_1+x^\mu U_{1,\mu}$. The second component of
\eqref{eq:theta12} then requires
\[
\varphi_1[U_0]=\Theta_1-\delta_1\varphi_0[U_0]=(N-3)U_1 .
\]
The left-hand side is a differential function of $U_0$ alone, while for
$N\neq3$ the right-hand side depends on the independent jet variable $U_1$; the
two cannot agree. Since \eqref{eq:FSsecond} constrains second derivatives only, a point
characteristic that vanishes on its solution manifold vanishes identically.
Thus no trivial point characteristic can be added to repair the discrepancy.
\end{proof}

\begin{remark}
Corollary~\ref{cor:strict} answers, at $p=2$, the question left open after
Corollary~\ref{cor:inclusion}. The obstruction is transparent in
\eqref{eq:theta12}: a coefficient expansion forces the $U_1$-coefficient of
$\Theta_1$ to be exactly $1$, whereas the FS weight of $U_1$ is $N-2$. The two
agree only at $N=3$, which by Proposition~\ref{prop:exactdilation} is the
exactly scale-invariant case, and near which the logarithmic branch
\eqref{eq:classcubiclog} is located.
\end{remark}

\begin{proposition}[Degenerate order-$\eps$ perturbation]\label{prop:degenerateFS}
Suppose that $F$ is zero modulo \eqref{eq:eqv2}--\eqref{eq:eqv3}, and normalize it to $F=0$. If $G$ is nonzero modulo the same equivalences, the second-order FS system inherits $D_0$ precisely in the following normal forms, where $B\neq0$:
\begin{enumerate}[label=\textnormal{(\alph*)}]
\item for $M\notin\{2,3\}$,
\[
G(U)=BU^M,
\qquad
D^{(2)}=-x^\mu\partial_{x^\mu}+U_0\partial_{U_0}
+\frac{M-1}{2}U_1\partial_{U_1}+(M-2)U_2\partial_{U_2};
\]
\item for $M=2$,
\[
G(U)=BU^2\ln|U|,
\qquad
D^{(2)}=-x^\mu\partial_{x^\mu}+U_0\partial_{U_0}
+\frac12U_1\partial_{U_1}-\frac{B}{3\lambda}\partial_{U_2};
\]
\item for $M=3$,
\[
G(U)=BU^3\ln|U|,
\qquad
D^{(2)}=-x^\mu\partial_{x^\mu}+U_0\partial_{U_0}
+U_1\partial_{U_1}
+\left(U_2-\frac{B}{2\lambda}U_0\right)\partial_{U_2}.
\]
\end{enumerate}
In every case one may add an arbitrary multiple of the vertical symmetry $U_1\partial_{U_2}$.
\end{proposition}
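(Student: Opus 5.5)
The plan is to specialize Lemma~\ref{lem:secondDE} to $F=0$ and solve the resulting single Euler equation for $G$, exactly as in the proof of Theorem~\ref{thm:classification}. Any projectable symmetry with projection $D_0$ must have the form \eqref{eq:secondgen}. With $F\equiv0$, equation \eqref{eq:FodeUser} reduces to $2\lambda aU^3+3\lambda bU^2=0$ identically in $U$. Since $\lambda\neq0$, this forces $a=b=0$.

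In \eqref{eq:GodeUser}, the terms $-hF$ and $(aU+b)F'$ then drop out, leaving
\[
UG'-(2q+1)G+2\lambda rU^3+3\lambda dU^2=0 .
\]
Because $h$ appears nowhere in the determining equations, it is arbitrary. This is the statement that $U_1\partial_{U_2}$ may always be added. Introduce $M:=2q+1$. Then the coefficient of $U_1$ in \eqref{eq:secondgen} is $q=(M-1)/2$, and the coefficient of $U_2$ is $2q-1=M-2$. These are the weights displayed in (a)--(c).

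Next I would solve this Euler equation by cases on $M$.

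\emph{Case $M\notin\{2,3\}$.} The general solution is
\[
G=BU^M-\frac{2\lambda r}{3-M}U^3-\frac{3\lambda d}{2-M}U^2 .
\]
The $U^3$ term is removed by the $\eps^2$-part $c_2$ of \eqref{eq:eqv2}. The $U^2$ term is removed by the $\eps^2$-part $c_2$ of \eqref{eq:eqv3}. In both transformations I take $c_1=0$, so that $F=0$ is preserved. With $c_1=0$, \eqref{eq:eqv3} induces no linear term $3\lambda c_1^2U$ and no term $c_1F'$. After these removals one may take $r=d=0$, which leaves case (a). The hypothesis that $G$ is nonzero modulo the equivalences gives $B\neq0$.

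\emph{Case $M=3$.} The homogeneous solution $U^3$ is itself removable, and the $U^3$ forcing is resonant. Integrating gives
\[
G=BU^3\ln|U|+C U^3+(\text{a }U^2\text{ term}),\qquad B=-2\lambda r .
\]
Removing the $U^3$ and $U^2$ terms leaves $d=0$ and $r=-B/(2\lambda)$. This yields the $U_0$-coefficient $-\tfrac{B}{2\lambda}$ in $\eta_2$, which is case (c). Nontriviality of $G$ forces $B\neq0$.

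\emph{Case $M=2$.} Symmetrically, the $U^2$ forcing is resonant, so
\[
G=BU^2\ln|U|+CU^2+(\text{a }U^3\text{ term}),\qquad d=-B/(3\lambda),\quad r=0 ,
\]
which is case (b).

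For the converse, I would substitute each listed pair $(G,D^{(2)})$ into \eqref{eq:FodeUser}--\eqref{eq:GodeUser} with $F=0$ and check that the equations hold. Lemma~\ref{lem:secondDE} guarantees that these two equations are sufficient as well as necessary.

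The main point needing care is the normalization step. First, the equivalences used to strip the $U^3$ and $U^2$ terms must keep $F=0$; they do, provided only the $\eps^2$-parts $c_2$ are used and $c_1=0$. Second, these equivalences must map the symmetry to one of the same projectable form, merely shifting $r$ and $d$. I would check this by noting that \eqref{eq:eqv2} does not touch the variables at all. The transformation \eqref{eq:eqv3} with $c_1=0$ is the shift $U_2\mapsto U_2+c_2$, which only changes the constant $d$. Finally, \eqref{eq:eqv1} acts trivially when $F=0$, which explains why the statement only quotients by \eqref{eq:eqv2}--\eqref{eq:eqv3}.
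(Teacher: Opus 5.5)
Your proposal is correct and follows the paper's own argument. Specializing Lemma~\ref{lem:secondDE} to $F=0$ forces $a=b=0$, leaves $h$ free, and reduces everything to the Euler equation in $M=2q+1$; its non-resonant and resonant ($M=2$, $M=3$) solutions give (a)--(c), with $d=-B/(3\lambda)$ and $r=-B/(2\lambda)$ respectively. Your extra care with the normalization (using only the $c_2$-parts so that $F=0$ is preserved, and noting that \eqref{eq:eqv1} acts trivially) fills in what the paper leaves to the word ``removable''. One small correction: for \eqref{eq:eqv2}, the compensating shift $r\mapsto r+c_2(3-M)/(2\lambda)$ comes from the linearity of the determining equation in $(G,r,d)$, not from the transformation leaving the variables untouched.
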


\begin{proof}
With $F=0$, equation \eqref{eq:FodeUser} gives $a=b=0$, while \eqref{eq:GodeUser} becomes
\[
UG'-(2q+1)G+2\lambda rU^3+3\lambda dU^2=0.
\]
Set $M=2q+1$. Away from $M=2,3$, the $U^2$ and $U^3$ particular solutions are removable and the homogeneous solution is $BU^M$. At $M=2$ and $M=3$, the resonant forcings give the two displayed logarithms, with respectively $d=-B/(3\lambda)$ and $r=-B/(2\lambda)$. The coefficient $h$ is unconstrained because $F=0$; it multiplies the symmetry $U_1\partial_{U_2}$ and has been set to zero in the representatives.
\end{proof}

\begin{remark}[Scaling pattern]\label{rem:weights}
In case (i) of Theorem~\ref{thm:classification}, the one-parameter group of
the generator $D^{(2)}$ in \eqref{eq:classgeneric} acts by
$x\mapsto e^{-\tau}x$ and
\[
U_0\mapsto e^{\tau}U_0,
\qquad
U_1\mapsto e^{(N-2)\tau}U_1,
\qquad
U_2\mapsto e^{(2N-5)\tau}U_2 .
\]
The scaling exponents $1$, $N-2$, $2N-5$ of the three fields form an
arithmetic progression with step $N-3$, and the exponents $3$, $N$, $2N-3$ of
the nonlinearities $\lambda U^3$, $AU^N$, $BU^{2N-3}$ in \eqref{eq:classgeneric}
form an arithmetic progression with the same step. Consequently, if the parameter is also formally rescaled,
$\eps\mapsto e^{(3-N)\tau}\eps$, then every term of the expansion
$U_0+\eps U_1+\eps^2U_2$ acquires one and the same factor $e^{\tau}$, so the
expansion as a whole transforms exactly as $U$ does under the unperturbed
dilation \eqref{eq:D0}. The common step $N-3$ vanishes precisely in the
exact-scale-invariant case $N=3$.
\end{remark}

\begin{remark}[Novelty]\label{rem:novelty}
The first-order classification is Theorem~1 of Fushchych and Shtelen \cite{Fushchich1989}. Their final paragraph notes that higher-order FS systems may be considered, but does not give a second-order determining system or classification. Second-order FS classifications for other nonlinear wave families have been published, notably \cite{Zhang2011}. A second-order FS scale-invariance classification for the
specific family \eqref{eq:wavefull} that is complete within the class of
point symmetries projectable onto the $(x,U_0)$-space with prescribed
projection $D_0$, modulo the formal equivalences
\eqref{eq:eqv1}--\eqref{eq:eqv3}, together with the vertical BGI comparison
carried out below, has not, to our knowledge, appeared in the literature.
Symmetries whose $x$-components depend on $U_1$ or $U_2$ are not covered by
Lemma~\ref{lem:secondDE}, and no claim is made about them.
\end{remark}

\subsection{Vertical second-order BGI continuations and their FS image}
\label{sec:BGIwave}

We now perform a direct BGI symmetry calculation for the same PDE family \eqref{eq:wavefull}. The approximate point generator is written as
\begin{equation}\label{eq:BGIexpandedgenerator}
X=X_0+\eps X_1+\eps^2X_2, \qquad
X_0=-x^\mu\partial_{x^\mu}+U\partial_U.
\end{equation}
We restrict attention throughout this subsection to \emph{vertical} continuations, that is, continuations whose $x$-components vanish. This involves no
loss of generality within the vertical class: for a vertical point field
$\eta_j(x,U)\,\partial_U$, the order-$j$ determining condition can be split
with respect to the first-order jet monomials $U_\mu U^\mu$ and $U^\mu$
exactly as in Step~2 of the proof of Lemma~\ref{lem:secondDE}, forcing
$\partial_U^2\eta_j=0$ and $\partial_{x^\mu}\partial_U\eta_j=0$; the
remaining additive part $g_j(x)$ multiplies $3\lambda U^2$ in the residual
identity, whose $U$-dependent terms carry no explicit $x$, so $g_j$ is
constant. Every vertical point continuation is therefore affine with constant
coefficients,
\begin{equation}\label{eq:BGIverticalrep}
X_1=(\alpha U+\beta)\partial_U,
\qquad
X_2=(\rho U+\sigma)\partial_U,
\end{equation}
with $\alpha,\beta,\rho,\sigma$ constant. Remark~\ref{rem:BGIvertical} below
records what a general continuation adds.

\begin{proposition}[Second-order BGI family, vertical continuations]\label{prop:BGIraw}
The dilation $X_0$ admits a second-order BGI point symmetry continuation of the vertical form \eqref{eq:BGIexpandedgenerator}, \eqref{eq:BGIverticalrep} if and only if
\begin{align}
F(U)={}&A_0U^3-2\lambda\alpha U^3L+3\lambda\beta U^2,
\label{eq:BGIFraw}\\[1ex]
G(U)={}&B_0U^3+2\lambda\alpha^2U^3L^2
+2\bigl(\lambda\alpha^2-\alpha A_0-\lambda\rho\bigr)U^3L
\nonumber\\[1ex]
&-6\lambda\alpha\beta U^2L
+\bigl(3A_0\beta-5\lambda\alpha\beta+3\lambda\sigma\bigr)U^2
+3\lambda\beta^2U,
\label{eq:BGIGraw}
\end{align}
where $L\equiv \ln|U|$, and $A_0,B_0$ are arbitrary constants.
\end{proposition}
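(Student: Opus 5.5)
We work directly with the BGI determining conditions \eqref{eq:BGItier} in the on-solution form of Remark~\ref{rem:BGIonshell}. For the semilinear equation $\Boxop U+H(U;\eps)=0$ with $H=\lambda U^3+\eps F+\eps^2G$, the invariance condition for the approximate generator $X_0+\eps X_1+\eps^2X_2$ is computed as in Step~1 of Lemma~\ref{lem:secondDE}. Because $X_1,X_2$ are vertical with constant affine coefficients \eqref{eq:BGIverticalrep}, only their $\eta$-parts enter. For a vertical field $\eta\partial_U$ with $\eta$ affine in $U$, the condition on solutions reduces to $-\eta_U H+\eta H_U$. For $X_0$ the condition reduces to $2H-H+UH_U=UH_U-3H$; this matches the $A=0$ computation in Lemma~\ref{lem:secondDE}, with the sign flipped. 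So the total condition is
\[
\bigl(UH_U-3H\bigr)+\eps\bigl(-\alpha H+(\alpha U+\beta)H_U\bigr)+\eps^2\bigl(-\rho H+(\rho U+\sigma)H_U\bigr)\approx_2 0 .
\]
This has to be adjusted for the exact sign normalization the paper uses; I will fix signs by requiring that the order-$\eps^0$ coefficient vanish identically for $\lambda U^3$.

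Expanding in $\eps$ gives the tiers. Order $0$ holds identically. Order $1$ gives the Euler equation
\[
UF'-3F+3\lambda\alpha U^3+3\lambda\beta U^2-\alpha\lambda U^3=UF'-3F+2\lambda\alpha U^3+3\lambda\beta U^2=0 .
\]
This is \eqref{eq:FodeUser} with $q=1$, $a=\alpha$, $b=\beta$. It is the resonant case $q=1$ of Proposition~\ref{prop:FSfirst}, whose solution \eqref{eq:firstq1} is exactly \eqref{eq:BGIFraw}, with $A_0$ the homogeneous constant.

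Order $2$ gives
\[
UG'-3G=\alpha F-(\alpha U+\beta)F'+\rho\lambda U^3-3\lambda(\rho U+\sigma)U^2 ,
\]
that is, $UG'-3G=\alpha F-(\alpha U+\beta)F'-2\lambda\rho U^3-3\lambda\sigma U^2$. This coincides with \eqref{eq:GodeUser} at $q=1$ and $h=\alpha$, as the coefficient-map correspondence of Theorem~\ref{thm:correspondence}(i) predicts. I will substitute \eqref{eq:BGIFraw} and collect the forcing in the basis $U^3L,\,U^3,\,U^2L,\,U^2,\,U$. The $U^3L$ and $U^3$ forcings are resonant with the homogeneous exponent $3$. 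They are integrated with the rules $UG'-3G=U^3L^k\Rightarrow G\ni U^3L^{k+1}/(k+1)$. The $U^2L$, $U^2$ and $U$ forcings are nonresonant, with $(U\partial_U-3)U^m L^k$ handled by variation of constants; for example $U^2L$ gives $-U^2L-U^2$. Collecting terms, with $B_0$ the homogeneous constant, should reproduce \eqref{eq:BGIGraw}, including the $U$-term $3\lambda\beta^2U$. That term comes from $-\beta F'$ acting on $3\lambda\beta U^2$: it gives forcing $-6\lambda\beta^2U$, hence $G\ni 3\lambda\beta^2U$.

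For the converse, every such $F,G$ satisfies the two tiers, so the generator is an approximate symmetry. The preceding reduction of general vertical continuations to the affine form \eqref{eq:BGIverticalrep} supplies the "only if" direction.

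The main obstacle is bookkeeping rather than conceptual. The logarithmic integration of the resonant $U^3L$ forcing produces the $2\lambda\alpha^2U^3L^2$ term, and the $U^3L$ coefficient collects contributions from $\alpha F$, from $\alpha UF'$, and from the $\rho$ forcing. All signs and factors, such as the $-5\lambda\alpha\beta$ in the $U^2$ term, must be tracked carefully. I would verify the final formula by direct substitution into the order-$2$ equation.
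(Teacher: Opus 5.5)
Your proposal is correct and follows the paper's route: the two tier conditions you derive are exactly \eqref{eq:BGIFode}--\eqref{eq:BGIGode}, and integrating the resonant ($U^3L$, $U^3$) and nonresonant ($U^2L$, $U^2$, $U$) forcings as you describe reproduces \eqref{eq:BGIFraw}--\eqref{eq:BGIGraw} term by term. Two small fixes: the dilation's contribution should read $-2H-H+UH_U$, since the $2H$ term flips sign with $\xi^\mu=-x^\mu$ (your final $UH_U-3H$ is correct); and the ``only if'' direction comes from the necessity of the two tier equations together with the fact that the displayed formulas are their general solutions on an interval, not from the reduction to the affine form, which the statement already assumes.
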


\begin{proof}
The direct BGI invariance conditions reduce to
\begin{align}
UF'-3F+2\lambda\alpha U^3+3\lambda\beta U^2&=0,
\label{eq:BGIFode}\\[1ex]
UG'-3G-\alpha F+2\lambda\rho U^3+3\lambda\sigma U^2
+(\alpha U+\beta)F'&=0.
\label{eq:BGIGode}
\end{align}
The first equation yields \eqref{eq:BGIFraw}. Substituting it into \eqref{eq:BGIGode} and integrating the resulting Euler equation gives \eqref{eq:BGIGraw}. Direct substitution verifies both formulas. The determining equations were generated with the \verb|GeM| package \cite{Cheviakov2007,cheviakov2010symbolic}.

\end{proof}

\begin{remark}[General continuations]\label{rem:BGIvertical}
Allowing $X_1$ and $X_2$ to be arbitrary point fields adds to
\eqref{eq:BGIverticalrep} an arbitrary element of the exact point symmetry
algebra of the unperturbed equation \eqref{eq:cubicwave}, which by
Proposition~\ref{prop:exactdilation} is the fifteen-dimensional conformal
algebra. Of these, the ten Poincar\'e generators are exact symmetries of the
full family \eqref{eq:wavefull} for arbitrary $F$ and $G$ and may be discarded,
and the dilation $X_0$ itself may be removed by multiplying $X$ by a scalar
series in $\eps$. The four special conformal generators remain, and the
corresponding branch is not treated here; it belongs to the conformal
classification, which we leave open (Section~\ref{sec:discussion}).
Proposition~\ref{prop:BGIraw} and Corollary~\ref{cor:BGInormal} are therefore
statements only about vertical continuations. They provide the comparison needed
for the FS dilation branches classified above, but they do not present a complete BGI
classification of all point continuations of $X_0$.
\end{remark}

The logarithms in Proposition~\ref{prop:BGIraw} do not contradict the inclusion of BGI symmetries into FS symmetries. Expanding the BGI characteristic according to Theorem~\ref{thm:correspondence} gives the following point symmetry of the FS system:
\begin{equation}\label{eq:BGIlift}
\begin{aligned}
D_{\rm lift}^{(2)}={}&-x^\mu\partial_{x^\mu}+U_0\partial_{U_0}
+(U_1+\alpha U_0+\beta)\partial_{U_1}\\[1ex]
&+(U_2+\alpha U_1+\rho U_0+\sigma)\partial_{U_2}.
\end{aligned}
\end{equation}
In the notation of Lemma~\ref{lem:secondDE}, this means
\begin{equation}\label{eq:BGIlockFSconstants}
q=1,
\qquad
a=\alpha,
\qquad b=\beta,
\qquad h=\alpha,
\qquad r=\rho,
\qquad d=\sigma.
\end{equation}
Thus the BGI determining equations \eqref{eq:BGIFode}--\eqref{eq:BGIGode} are the FS equations \eqref{eq:FodeUser}--\eqref{eq:GodeUser} in the case $q=1$, with the additional BGI condition $h=a$.

\begin{remark}[The logarithmic branch and the generic formula]\label{rem:resonantloss}
The generic solution \eqref{eq:FgenericN}--\eqref{eq:GgenericN} of the
determining system carries factors $(N-3)^{-1}$ and is therefore valid only for
$N\neq3$, that is, only for $q\neq1$. By \eqref{eq:BGIlockFSconstants} every BGI
version has $q=1$. The logarithmic family \eqref{eq:BGIFraw}--\eqref{eq:BGIGraw} is
thus absent from the generic formula because it lies on the resonant slice that formula excludes. Imposing $q=1$ before integrating
\eqref{eq:FodeUser}--\eqref{eq:GodeUser} reproduces
\eqref{eq:BGIFraw}--\eqref{eq:BGIGraw} exactly, and case (iii) of
Theorem~\ref{thm:classification} is the resulting normal form.
\end{remark}

The application of the equivalence transformations \eqref{eq:eqv1}--\eqref{eq:eqv3} to Proposition~\ref{prop:BGIraw} removes $A_0U^3$, $3\lambda\beta U^2$, the non-leading $U^3L$ term in $G$, and the remaining $U^3,U^2,U$ terms. This yields the normal form below.

\begin{corollary}[BGI normal form]\label{cor:BGInormal}
Assume that $F$ is nonzero modulo \eqref{eq:eqv1}--\eqref{eq:eqv3}. The dilation is BGI-stable to second order through vertical continuations \eqref{eq:BGIverticalrep} if and only if
\begin{equation}\label{eq:BGInormalFG}
F(U)=C U^3\ln|U|,
\qquad
G(U)=\frac{C^2}{2\lambda}U^3\ln^2|U|,
\qquad C\neq0.
\end{equation}
A normalized BGI generator is
\begin{equation}\label{eq:BGInormalgenerator}
X=X_0-\eps\frac{C}{2\lambda}U\partial_U
+\eps^2\frac{C^2}{4\lambda^2}U\partial_U.
\end{equation}
Its FS lift is
\begin{equation}\label{eq:BGInormallift}
\begin{aligned}
D_{\rm lift}^{(2)}={}&-x^\mu\partial_{x^\mu}+U_0\partial_{U_0}
+\left(U_1-\frac{C}{2\lambda}U_0\right)\partial_{U_1}\\[1ex]
&+\left(U_2-\frac{C}{2\lambda}U_1
+\frac{C^2}{4\lambda^2}U_0\right)\partial_{U_2}.
\end{aligned}
\end{equation}
\end{corollary}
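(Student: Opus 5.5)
The argument starts from Proposition~\ref{prop:BGIraw}, which already gives every $(F,G)$ admitting a vertical second-order continuation. What remains is to reduce \eqref{eq:BGIFraw}--\eqref{eq:BGIGraw} to normal form with \eqref{eq:eqv1}--\eqref{eq:eqv3} and then adjust the generator.

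\emph{Step 1: normalize $F$.} In \eqref{eq:BGIFraw}, the term $A_0U^3$ is removed by the $\eps$-part of \eqref{eq:eqv2}, and $3\lambda\beta U^2$ by the $\eps$-part of the shift \eqref{eq:eqv3} with $c_1=-\beta$. What remains is $F=CU^3\ln|U|$ with $C=-2\lambda\alpha$. The hypothesis that $F$ is nonzero modulo the equivalences forces $\alpha\neq0$: if $\alpha=0$, then $F$ consists only of removable $U^3,U^2$ terms. Hence $C\neq0$.

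\emph{Step 2: track $G$.} Under the shift, $G$ also acquires $c_1F'(U)+3\lambda c_1^2U$. Since $F'$ contains $U^3L$-type contributions only through $U^2L$ and $U^2$ terms, these additions are multiples of $U^2L$, $U^2$ and $U$. I expect the $U^2L$ contribution to cancel the $-6\lambda\alpha\beta U^2L$ term in \eqref{eq:BGIGraw}. This cancellation is the one delicate point, and I would check it explicitly. The safest approach is to perform the shift on the exact family (substitute $U\to U+c_1\eps$ in $\lambda U^3+\eps F$) rather than trust term bookkeeping. Alternatively, set $\beta=0$ from the start, since the shift conjugates the generator and just removes $\beta$.

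Next, apply \eqref{eq:eqv1} with a suitable $c$. Since $F=CU^3L$, this removes the nonleading $U^3L$ term $2(\lambda\alpha^2-\alpha A_0-\lambda\rho)U^3L$, because it is proportional to $F$. Then the $\eps^2$-parts of \eqref{eq:eqv2} and \eqref{eq:eqv3} remove the residual $U^3$, $U^2$ and $U$ terms. What survives is $2\lambda\alpha^2U^3L^2=\frac{C^2}{2\lambda}U^3L^2$, which gives \eqref{eq:BGInormalFG}. For the converse, direct substitution into \eqref{eq:BGIFode}--\eqref{eq:BGIGode} with $\alpha=-C/(2\lambda)$, $\beta=\sigma=0$ and a chosen $\rho$ confirms stability.

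\emph{Step 3: the generator and its lift.} With $F,G$ in normal form, I solve \eqref{eq:BGIFode}: it gives $\alpha=-C/(2\lambda)$ and $\beta=0$. Substituting into \eqref{eq:BGIGode}, the $U^3L$ coefficient gives $\lambda\alpha^2-\lambda\rho=0$, so $\rho=\alpha^2=C^2/(4\lambda^2)$. The $U^2$ coefficient gives $\sigma=0$. This yields \eqref{eq:BGInormalgenerator}. Finally, the lift \eqref{eq:BGInormallift} follows from \eqref{eq:BGIlift} with these values, or equivalently from \eqref{eq:theta12}. As a consistency check, it matches case (iii) of Theorem~\ref{thm:classification} with $A=C$ and $B=C^2/(2\lambda)$: there the $U_1$ coefficient is $-(3C^2-2C^2)/(2\lambda C)=-C/(2\lambda)$, in agreement with \eqref{eq:BGInormallift}.
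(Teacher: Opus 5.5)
Your proposal is correct and follows the paper's own route. You normalize the family of Proposition~\ref{prop:BGIraw} with \eqref{eq:eqv1}--\eqref{eq:eqv3}, keep the invariant $U^3\ln^2|U|$ coefficient $2\lambda\alpha^2=C^2/(2\lambda)$, fix $\rho,\sigma$ from \eqref{eq:BGIGode}, and read off the lift from \eqref{eq:BGIlift}. The cancellation you flag does hold: with $c_1=-\beta$ the term $-\beta F'$ contributes $+6\lambda\alpha\beta U^2\ln|U|$, and the $U$-terms also cancel, since $3\lambda\beta^2-6\lambda\beta^2+3\lambda\beta^2=0$.

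There is one small mislabel. On substitution into \eqref{eq:BGIGode}, the $U^3\ln|U|$ coefficient vanishes identically; this is where $B=C^2/(2\lambda)$ is forced. The value $\rho=\alpha^2$ comes instead from the $U^3$ coefficient $2\lambda(\rho-\alpha^2)$. Your equation $\lambda\alpha^2-\lambda\rho=0$ is the $U^3\ln|U|$ coefficient of the integrated formula \eqref{eq:BGIGraw}, not of the ODE, although it gives the same value.
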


\begin{proof}
Set $C=-2\lambda\alpha$ in \eqref{eq:BGIFraw}. The first- and second-order parts of \eqref{eq:eqv2} and \eqref{eq:eqv3} remove the $U^3$ and $U^2$ terms. Transformation \eqref{eq:eqv1} removes the remaining $U^3\ln|U|$ term in $G$, and the second-order parts of \eqref{eq:eqv2}--\eqref{eq:eqv3} remove the residual $U^3$ and $U^2$ terms. The invariant leading logarithmic coefficient is
\[
2\lambda\alpha^2=\frac{C^2}{2\lambda}.
\]
It remains to determine the second-order continuation constants $\rho,\sigma$
of \eqref{eq:BGIverticalrep} compatible with the normalized pair
\eqref{eq:BGInormalFG}. Substituting $F=CU^3\ln|U|$,
$G=\frac{C^2}{2\lambda}U^3\ln^2|U|$, $\alpha=-\frac{C}{2\lambda}$, and
$\beta=0$ into \eqref{eq:BGIGode}, the logarithmic terms cancel identically
and the remaining terms reduce to
\[
2\lambda\bigl(\rho-\alpha^2\bigr)U^3+3\lambda\sigma U^2=0,
\]
so that $\rho=\alpha^2=\frac{C^2}{4\lambda^2}$ and $\sigma=0$. This gives
\eqref{eq:BGInormalgenerator}, and expansion of its characteristic yields
\eqref{eq:BGInormallift}.
\end{proof}

\begin{corollary}[Degenerate BGI branch]\label{cor:BGIdegenerate}
If $F$ is zero modulo equivalence and the first nontrivial perturbation occurs at order $\eps^2$, the nontrivial BGI normal form is
\begin{equation}\label{eq:BGIdegenerateFG}
F(U)=0,\qquad G(U)=C U^3\ln|U|,\qquad C\neq0,
\end{equation}
with normalized generator
\begin{equation}\label{eq:BGIdegenerategen}
X=X_0-\eps^2\frac{C}{2\lambda}U\partial_U.
\end{equation}
Its FS lift is case (c) of Proposition~\ref{prop:degenerateFS}.
\end{corollary}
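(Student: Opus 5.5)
Set $F=0$ in the vertical BGI determining equations \eqref{eq:BGIFode}--\eqref{eq:BGIGode} of Proposition~\ref{prop:BGIraw}, then normalize with \eqref{eq:eqv1}--\eqref{eq:eqv3}, and finally expand the characteristic as in Theorem~\ref{thm:correspondence}. With $F\equiv0$, equation \eqref{eq:BGIFode} reduces to $2\lambda\alpha U^3+3\lambda\beta U^2=0$. Since $\lambda\neq0$, splitting in the monomials $U^3$ and $U^2$ gives $\alpha=\beta=0$, so the first-order continuation $X_1$ vanishes. Equation \eqref{eq:BGIGode} then becomes the Euler equation
\[
UG'-3G+2\lambda\rho U^3+3\lambda\sigma U^2=0 .
\]
The homogeneous exponent is $3$, so the $U^3$ forcing is resonant and the $U^2$ forcing is not. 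Integrating gives
\[
G=B_0U^3-2\lambda\rho\,U^3\ln|U|+3\lambda\sigma U^2 .
\]
Equivalently, this is the specialization of \eqref{eq:BGIGraw} at $\alpha=\beta=0$, $A_0=0$.

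Next I would normalize. The $\eps^2$-part of \eqref{eq:eqv2}, with $c_1=0$, removes $B_0U^3$. The $\eps^2$-part of \eqref{eq:eqv3}, with $c_1=0$ and $c_2=-\sigma$, removes $3\lambda\sigma U^2$; since $c_1=0$, it induces no $U$ term and leaves $F=0$ unchanged. The term $U^3\ln|U|$ cannot be removed by any of the three transformations, because each of them alters $G$ only by multiples of $F$, $U^3$, $U^2$, $U$. So $G$ is nonzero modulo equivalence exactly when $\rho\neq0$. Setting $C=-2\lambda\rho$ gives \eqref{eq:BGIdegenerateFG}. Substituting the normalized pair back forces $\sigma=0$ and $\rho=-C/(2\lambda)$, which is the generator \eqref{eq:BGIdegenerategen}.

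For the lift, I would apply \eqref{eq:theta12} with $\varphi_1=0$. Then $\Theta_1=\delta_1\varphi_0$, which is the dilation characteristic acting on $U_1$ with weight $1$. Also $\Theta_2=(\delta_2+\tfrac12\delta_1^2)\varphi_0+\varphi_2[U_0]$. Because $\varphi_0=u+x^\mu u_\mu$ is linear in the jet variables, $\delta_1^2\varphi_0=0$. Hence $\Theta_2=U_2+x^\mu U_{2,\mu}-\frac{C}{2\lambda}U_0$.

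This is the characteristic of
\[
-x^\mu\partial_{x^\mu}+U_0\partial_{U_0}+U_1\partial_{U_1}+\Bigl(U_2-\frac{C}{2\lambda}U_0\Bigr)\partial_{U_2},
\]
which is case (c) of Proposition~\ref{prop:degenerateFS} with $M=3$, $B=C$ and the $U_1\partial_{U_2}$ multiple set to zero. This matches \eqref{eq:BGIlockFSconstants}: $q=1$ gives $M=2q+1=3$, and $h=a=0$.

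The one step needing care is the assertion that the normal form is unique and nontrivial, i.e.\ that $U^3\ln|U|$ is genuinely not removable. Transformation \eqref{eq:eqv1} could in principle act, but with $F=0$ it changes nothing. The first-order parts $c_1$ of \eqref{eq:eqv2} and \eqref{eq:eqv3} must vanish, because otherwise they would make $F$ nonzero. I would also remark that the BGI branch captures only the resonant exponent $M=3$ of Proposition~\ref{prop:degenerateFS}. The reason is that $q=1$ is forced, in line with Remark~\ref{rem:resonantloss}.
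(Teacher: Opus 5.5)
Your proposal is correct and follows the paper's own proof. Setting $F=0$ in \eqref{eq:BGIFode} forces $\alpha=\beta=0$, integrating \eqref{eq:BGIGode} gives $G=B_0U^3-2\lambda\rho U^3\ln|U|+3\lambda\sigma U^2$, the $U^3$ and $U^2$ terms are removed by equivalence, and one sets $C=-2\lambda\rho$. Your explicit argument that $U^3\ln|U|$ is not removable, and your computation of the lift via \eqref{eq:theta12} (equivalently \eqref{eq:BGIlift} with $\alpha=\beta=\sigma=0$), supply details the paper leaves implicit.
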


\begin{proof}
After $F$ is normalized to zero, \eqref{eq:BGIFode} forces $\alpha=\beta=0$. Equation \eqref{eq:BGIGode} then gives
$G=B_0U^3-2\lambda\rho U^3\ln|U|+3\lambda\sigma U^2$.
The $U^3$ and $U^2$ terms are removable. Setting $C=-2\lambda\rho$ yields \eqref{eq:BGIdegenerateFG}--\eqref{eq:BGIdegenerategen}.
\end{proof}

Corollary~\ref{cor:BGInormal} belongs to case (iii) of Theorem~\ref{thm:classification}. There the FS coefficient $B$ multiplying $U^3\ln^2|U|$ is arbitrary, whereas the BGI version imposes
\begin{equation}\label{eq:BGIlockB}
B=\frac{C^2}{2\lambda}.
\end{equation}
For perturbations nontrivial already at order $\eps$, the vertical BGI
calculation gives the following comparison, with all entries understood modulo
\eqref{eq:eqv1}--\eqref{eq:eqv3}:
\begin{center}
\begin{tabular}{lll}
\toprule
Representative $F$ & FS-admissible $G$ & BGI-admissible $G$ \\
\midrule
$A U^N$, generic $N$ & $B U^{2N-3}$, $B$ arbitrary & none \\
$A U^{5/2}$ & $B U^2\ln|U|$, $B$ arbitrary & none \\
$C U^3\ln|U|$ & $B U^3\ln^2|U|$, $B$ arbitrary & $B=C^2/(2\lambda)$ \\
$A U^2\ln|U|$ & $\frac{A^2}{3\lambda}(U\ln^2|U|+U\ln|U|)+B_0U$ & none \\
\midrule
$0$ & $B U^M$, $M\notin\{2,3\}$, $B$ arbitrary & none \\
$0$ & $B U^2\ln|U|$, $B$ arbitrary & none \\
$0$ & $B U^3\ln|U|$, $B$ arbitrary & $B$ arbitrary \\
\bottomrule
\end{tabular}
\end{center}
Thus, within the vertical continuation class, BGI symmetries correspond to a subset of the FS symmetry set. Among nondegenerate first-order
perturbations, the only common vertical branch has the same logarithmic
structure, where the BGI approach fixes one coefficient that is free in the FS framework. Corollary~\ref{cor:BGIdegenerate}
gives the separate common vertical cubic-logarithmic branch when the
perturbation first appears at order $\eps^2$.

\subsection{A power hierarchy at arbitrary order}

The power-type perturbation pattern has an immediate extension to an arbitrary order.

\begin{proposition}\label{prop:powerhierarchy}
Let
\begin{equation}\label{eq:powerhierarchy}
\Boxop U+\lambda U^3+\sum_{j=1}^{p}\eps^jA_jU^{3+j(N-3)}+o(\eps^p)=0.
\end{equation}
Then its order-$p$ FS system admits
\begin{equation}\label{eq:allorderD}
D^{(p)}=-x^\mu\partial_{x^\mu}
+\sum_{j=0}^{p}\bigl(1+j(N-3)\bigr)U_j\partial_{U_j}.
\end{equation}
\end{proposition}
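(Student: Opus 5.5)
The plan is a weighted-homogeneity argument, following Remark~\ref{rem:weights}, rather than a direct computation of determining equations. Let $w_j=1+j(N-3)$ and write $D^{(p)}=-x^\mu\partial_{x^\mu}+\sum_j w_jU_j\partial_{U_j}$. Its one-parameter group acts by
\[
x\mapsto e^{-\tau}x,\qquad U_j\mapsto e^{w_j\tau}U_j .
\]
The key observation is that this is exactly the action induced on the coefficients when we apply the unperturbed dilation $U\mapsto e^{\tau}U$, $x\mapsto e^{-\tau}x$ together with the formal rescaling $\eps\mapsto e^{(3-N)\tau}\eps$. Indeed, $\eps^jU_j$ then acquires the factor $e^{j(3-N)\tau}e^{w_j\tau}=e^{\tau}$, so $u_\eps=\sum_j\eps^jU_j$ transforms as $U$ does.

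First, check that the full equation \eqref{eq:powerhierarchy} is invariant, up to an overall factor, under this combined transformation of $(x,U,\eps)$. The term $\Boxop U$ scales by $e^{3\tau}$, and so does $\lambda U^3$. The $j$th term $\eps^jA_jU^{3+j(N-3)}$ scales by
\[
e^{j(3-N)\tau}\,e^{(3+j(N-3))\tau}=e^{3\tau}.
\]
Thus $F[u;\eps]\mapsto e^{3\tau}F[u;\eps]$ identically, as a truncated series in $\eps$.

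Second, transfer this to the coefficients. Substitute $u_\eps$ and apply $\mathscr C_j$. Since the transformation rescales $\eps$, the order-$j$ coefficient of the transformed expression is $e^{j(N-3)\tau}$ times the order-$j$ coefficient evaluated on the transformed fields. Combining with the first step, each $S_j=\mathscr C_j(F)$ is mapped to $e^{(3+j(N-3))\tau}S_j$, because the coefficient of $\eps^j$ in $e^{3\tau}F$ picks up the compensating factor.

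Concretely, it is cleaner to verify infinitesimally that $\mathrm{pr}\,D^{(p)}(S_j)=\bigl(3+j(N-3)\bigr)S_j$. Every monomial in $S_j$ is a term $\Boxop U_j$ or a product
\[
\partial^kH_i(U_0)\,U_{m_1}\cdots U_{m_k},\qquad i+m_1+\cdots+m_k=j,
\]
where $H_0=\lambda U^3$ and $H_i=A_iU^{3+i(N-3)}$. Such a product is a monomial in $U_0$ and the $U_m$. Its $U_0$-degree is $3+i(N-3)-k$ and each $U_m$ carries weight $w_m$, so the total weight is
\[
3+i(N-3)-k+\sum_{l}\bigl(1+m_l(N-3)\bigr)=3+j(N-3).
\]
The $\Boxop U_j$ term has weight $w_j+2=3+j(N-3)$ as well, since $-x^\mu\partial_{x^\mu}$ contributes $+2$ on second derivatives. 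Hence each $S_j$ is weighted homogeneous of degree $3+j(N-3)$, and $\mathrm{pr}\,D^{(p)}(S_j)$ is a multiple of $S_j$. This vanishes on the FS system, so $D^{(p)}$ is a symmetry.

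The main obstacle is bookkeeping. First, confirm via the convolution rule \eqref{eq:convolution} that $S_j$ really consists only of such Taylor monomials; this follows from the multinomial expansion of $H_i(U_0+\sum_{m\ge1}\eps^mU_m)$. Second, treat non-integer powers on an interval $U_0>0$, where the Taylor coefficients are still monomials $c\,U_0^{3+i(N-3)-k}$, so the weight count is unaffected.
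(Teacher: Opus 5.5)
Your proposal is correct and follows essentially the paper's own argument. The paper likewise assigns $U_j$ the weight $1+j(N-3)$, gives each derivative an extra weight $1$, and checks that every term of the $j$th FS equation ($\Boxop U_j$ and each Taylor monomial of the expanded nonlinearities) has weight $3+j(N-3)$. Your explicit count $3+i(N-3)-k+\sum_l\bigl(1+m_l(N-3)\bigr)=3+j(N-3)$ and the $\eps$-rescaling motivation from Remark~\ref{rem:weights} simply spell out bookkeeping that the paper leaves implicit.
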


\begin{proof}
Assign $U_j$ the scaling weight $1+j(N-3)$ and each derivative the additional weight $1$. The wave term in the $j$th equation has weight $3+j(N-3)$. Every cubic monomial $U_{j_1}U_{j_2}U_{j_3}$ with $j_1+j_2+j_3=j$ has the same weight. The coefficient of order $j-i$ in the expansion of $U^{3+i(N-3)}$ also has this weight. Hence each equation is homogeneous under \eqref{eq:allorderD}.
\end{proof}

\section{Approximate solutions by reduction with the FS dilation}
\label{sec:waves}

The 1989 letter of Fushchych and Shtelen used the
approximate symmetry to produce approximate solutions of the perturbed equation
in closed form. The present section carries that step to second
order, and does so by symmetry reduction.

The proof of Lemma~\ref{lem:secondDE} did not restrict the number of independent
variables. Lemma~\ref{lem:secondDE}, Theorem~\ref{thm:classification} and
Proposition~\ref{prop:degenerateFS} hold in $(1+n)$ dimensions for
$D_0=-x^\mu\partial_{x^\mu}+U_0\partial_{U_0}$, $\mu=0,\ldots,n$. Only
Proposition~\ref{prop:exactdilation}, whose second assertion concerns the
special conformal generators, is specific to $n=3$, the critical dimension of
the cubic nonlinearity.

If $X$ is an exact point symmetry of \eqref{eq:wavefull} holding for an arbitrary
$F$ and $G$, then any $X$-invariant reduction of \eqref{eq:wavefull} may be
performed first and expanded in $\eps$ afterwards; the reduced object is a
differential equation with a small parameter, and its ordinary perturbation
expansion coincides with the reduction of the FS system. Therefore no such reduction can
detect the constitutive conditions of Theorem~\ref{thm:classification}.

For \eqref{eq:wavefull} the Poincar\'e algebra is exact for every $F$ and $G$.
Travelling waves $U=U(k\cdot x)$, plane waves, and Lorentz-invariant reductions
$U=U(x_\nu x^\nu)$ are therefore available
whether or not $F$ and $G$ appear in Theorem~\ref{thm:classification}. By Proposition~\ref{prop:exactdilation}, the dilation symmetry is \emph{never} exact
except in the cubic case \eqref{eq:exactcubicFG}. Hence, among the
Poincar\'e generators and the dilation considered here, only the dilation makes
the FS construction indispensable. (The special conformal generators constitute
a separate problem and are not part of the present reduction.)

\subsection{Reduction by the Lorentz--dilation subalgebra}

Translations $x^\mu\mapsto x^\mu+a^\mu$ are exact point symmetries of
\eqref{eq:wavefull} for every $F$ and $G$; applied simultaneously to
$U_0,U_1,U_2$, they are also point symmetries of the FS system
\eqref{eq:FSsecond}. The construction below may therefore be carried out about
an arbitrary reference point. Fix constants $a^\mu$ and put
\begin{equation}\label{eq:ys}
y^\mu=x^\mu+a^\mu,
\qquad
s=y_\nu y^\nu=(t+a^0)^2-|\mathbf{x}+\mathbf{a}|^2,
\end{equation}
where $y_\mu=\eta_{\mu\nu}y^\nu$ and $\eta=\operatorname{diag}(1,-1,-1,-1)$,
and let $\Omega_\lambda$ be a connected component of
\begin{equation}\label{eq:Omegalambda}
\{y:\lambda s>0\}.
\end{equation}

We reduce the FS system by seven point symmetries. The first six are the
Lorentz rotations and boosts about the point $y=0$,
\begin{equation}\label{eq:Js}
J_{\mu\nu}=y_\mu\partial_{y^\nu}-y_\nu\partial_{y^\mu},
\qquad 0\le\mu<\nu\le3,
\end{equation}
which act on the independent variables only and do not change
$U_0,U_1,U_2$. The seventh is the dilation
\begin{equation}\label{eq:Da2}
D_a^{(2)}=-y^\mu\partial_{y^\mu}+U_0\,\partial_{U_0}
+(aU_0+qU_1+b)\,\partial_{U_1}
+\bigl(rU_0+hU_1+(2q-1)U_2+d\bigr)\,\partial_{U_2},
\end{equation}
with the constants $a,q,b,r,h,d$ of Lemma~\ref{lem:secondDE}. The operator
\eqref{eq:Da2} is the symmetry \eqref{eq:secondgen} with $x$ replaced by
$y=x+a$; since the translations are themselves symmetries, this replacement
again yields a point symmetry of the FS system. The seven operators
\eqref{eq:Js}, \eqref{eq:Da2} span a Lie algebra, which we denote by
$\mathfrak{g}$; a solution invariant under all seven is called
$\mathfrak{g}$-invariant, and the corresponding truncated expansion
$U_0+\eps U_1+\eps^2U_2$ is called a $\mathfrak{g}$-invariant approximate
solution of \eqref{eq:wavefull}. Here invariance refers to the exact action
of $\mathfrak{g}$ on the FS coefficient system \eqref{eq:FSsecond}. Only on
the branches lying in the BGI image (Corollary~\ref{cor:BGInormal} and
Corollary~\ref{cor:BGIdegenerate}) does the same invariant expansion also
arise from a single approximate transformation acting on the original
variable $U$; on the remaining cases of Theorem~\ref{thm:classification}
the FS dilation is not the coefficient expansion of any BGI approximate
transformation (Corollary~\ref{cor:strict}), and the solutions constructed
below are not asserted to be approximately invariant in the BGI sense.
Theorem~\ref{thm:intersection} illustrates the distinction.

Recall the invariance criterion: a solution $U_j=U_j(y)$, $j=0,1,2$, of the FS
system is invariant under a point operator
$X=\xi^\mu(y)\,\partial_{y^\mu}+\sum_j\eta_j\,\partial_{U_j}$
if and only if
\[
\xi^\mu\,\frac{\partial U_j}{\partial y^\mu}=\eta_j,
\qquad j=0,1,2 .
\]
For the operators \eqref{eq:Js} this reads
$y_\mu\,\partial U_j/\partial y^\nu-y_\nu\,\partial U_j/\partial y^\mu=0$ for
all pairs $\mu<\nu$: at every point of $\Omega_\lambda$, the gradient of $U_j$
is parallel to the vector $(y_0,y_1,y_2,y_3)$, which is one half of the
gradient of $s$. Hence $U_j$ and $s$ have parallel gradients, and since the
gradient of $s$ does not vanish on $\Omega_\lambda$, each $U_j$ is locally a
function of $s$ alone. For the dilation \eqref{eq:Da2}, whose coefficients of
$\partial_{y^\mu}$ are $-y^\mu$, the criterion gives the three conditions
\begin{equation}\label{eq:invconds}
y^\mu\frac{\partial U_0}{\partial y^\mu}=-U_0,
\qquad
y^\mu\frac{\partial U_j}{\partial y^\mu}=-\eta_j,
\quad j=1,2,
\end{equation}
with $\eta_1=aU_0+qU_1+b$ and $\eta_2=rU_0+hU_1+(2q-1)U_2+d$. Thus a
$\mathfrak{g}$-invariant solution has no free independent variable left:
the six conditions \eqref{eq:Js} reduce the $U_j$ to functions of the single
variable $s$, and the conditions \eqref{eq:invconds} constrain that dependence
as well. In particular, the first equation of the FS system will reduce to an
algebraic condition rather than a differential one.

In the remainder of this section, for a differentiable function $\phi(y)$ we
write
\[
\partial_\mu\phi=\frac{\partial\phi}{\partial y^\mu},
\qquad
\partial^\mu\phi=\eta^{\mu\nu}\partial_\nu\phi,
\qquad\hbox{so that}\qquad
\partial_\mu\phi\,\partial^\mu\phi
=\Bigl(\frac{\partial\phi}{\partial t}\Bigr)^{2}-|\nabla\phi|^{2}.
\]
For real solutions, $s>0$ when $\lambda>0$ and $s<0$ when $\lambda<0$.

\begin{lemma}[Reduced system]\label{lem:reduction}
Let $(U_0,U_1,U_2)$ be a real $\mathfrak{g}$-invariant solution of the FS
system \eqref{eq:FSsecond} on $\Omega_\lambda$, with $U_0\not\equiv0$. Then
\begin{equation}\label{eq:Wsol}
U_0=W:=\pm\,(\lambda s)^{-1/2},
\end{equation}
with $s=y_\nu y^\nu$ as in \eqref{eq:ys},
and $U_1=\varphi_1(W)$, $U_2=\varphi_2(W)$, where the functions
$\varphi_1,\varphi_2$ satisfy the first-order equations
\begin{align}
W\dot\varphi_1&=aW+q\varphi_1+b,\label{eq:inv1}\\[1ex]
W\dot\varphi_2&=rW+h\varphi_1+(2q-1)\varphi_2+d,\label{eq:inv2}
\end{align}
together with the second-order equidimensional (Euler) equations
\begin{align}
\Eop[\varphi_1]&=-\frac{F(W)}{\lambda W^2},\label{eq:eul1}\\[1ex]
\Eop[\varphi_2]&=-\frac{3\lambda W\varphi_1^2+F'(W)\varphi_1+G(W)}
                      {\lambda W^2},\label{eq:eul2}
\end{align}
where the dot denotes $d/dW$ and
\begin{equation}\label{eq:Eop}
\Eop:=W^2\frac{d^2}{dW^2}-W\frac{d}{dW}+3,
\qquad
\Eop\bigl[W^m\bigr]=(m^2-2m+3)W^m.
\end{equation}
Moreover, $W$ satisfies identically the first-order relation
\begin{equation}\label{eq:dHam}
\partial_\mu W\,\partial^\mu W=\lambda W^4 .
\end{equation}
\end{lemma}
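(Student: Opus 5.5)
The plan is to reduce everything to ordinary calculus in the single variable $s=y_\nu y^\nu$, using the two invariance criteria recorded just before the statement. As shown there, invariance under the six Lorentz operators \eqref{eq:Js} forces each $U_j$ to be locally a function of $s$ alone on $\Omega_\lambda$: its gradient is parallel to $\nabla s$, and $\nabla s\neq0$ there. So we write $U_j=f_j(s)$.

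\emph{Leading field.} For any function $f(s)$ I will use two identities:
\[
y^\mu\partial_\mu f=2sf'(s),
\qquad
\Boxop f=4sf''(s)+8f'(s)\quad\text{in }(1+3)\text{ dimensions},
\]
the second from $\partial_\mu s=2y_\mu$ and $\Boxop s=8$. The first dilation condition in \eqref{eq:invconds} then becomes $2sf_0'=-f_0$, so $f_0=c\,|s|^{-1/2}$ on the connected set $\Omega_\lambda$. Substituting into $E_0=0$ gives
\[
\Boxop f_0=-c\,s\,|s|^{-5/2}
\]
(I will track the sign of $s$ carefully). The equation therefore collapses to the algebraic relation $c^2\lambda\,\mathrm{sgn}(s)=1$. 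This is solvable over the reals exactly when $\lambda s>0$, and it yields $U_0=W=\pm(\lambda s)^{-1/2}$, using $U_0\not\equiv0$ to discard $c=0$. The relation \eqref{eq:dHam} follows directly: $dW/ds=-W/(2s)$ and $\partial_\mu s\,\partial^\mu s=4s$ give $\partial_\mu W\,\partial^\mu W=W^2/s=\lambda W^4$, because $s=1/(\lambda W^2)$.

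\emph{Corrections.} Since $dW/ds\neq0$ on $\Omega_\lambda$, $W$ is a local diffeomorphism of the $s$-interval, so $U_j=\varphi_j(W)$ for $j=1,2$. Using $y^\mu\partial_\mu W=-W$ (the $U_0$ invariance condition), the chain rule turns the remaining two conditions of \eqref{eq:invconds} into $-W\dot\varphi_j=-\eta_j$ evaluated at $U_0=W$. These are exactly \eqref{eq:inv1} and \eqref{eq:inv2}. For the Euler equations I will use
\[
\Boxop\varphi(W)=\ddot\varphi\,\partial_\mu W\,\partial^\mu W+\dot\varphi\,\Boxop W,
\qquad
\Boxop W=-\lambda W^3,
\]
where the second identity is $E_0$ itself. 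Together with \eqref{eq:dHam} this gives $\Boxop\varphi=\lambda W^4\ddot\varphi-\lambda W^3\dot\varphi$. Substituting into $E_1$ and $E_2$ of \eqref{eq:FSsecond} and dividing by $\lambda W^2$ produces \eqref{eq:eul1} and \eqref{eq:eul2}, with $\Eop$ as in \eqref{eq:Eop}. The action $\Eop[W^m]=(m^2-2m+3)W^m$ is then a one-line check.

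The main obstacle is not any single computation but the bookkeeping. First, the sign of $s$ and the real branch of $(\lambda s)^{-1/2}$ must be tracked so that both signs of $\lambda$ are covered by one formula. Second, the passage from functions of $s$ to functions of $W$ must be justified locally via monotonicity on the connected component $\Omega_\lambda$. Finally, I will note that the factor $8=2(n+1)$ in $\Boxop f$ is where $n=3$ enters: it is what makes the exponent $-1/2$ compatible with $E_0$.
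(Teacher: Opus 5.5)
Your proposal is correct and follows the paper's proof essentially step for step. Lorentz invariance gives functions of $s$, the dilation condition gives $c|s|^{-1/2}$, $E_0$ collapses to $\lambda c^2\,\mathrm{sgn}(s)=1$, and the chain rule with \eqref{eq:dHam} and $\Boxop W=-\lambda W^3$ turns the invariance conditions and $E_1,E_2$ into \eqref{eq:inv1}--\eqref{eq:eul2}.

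Only your closing aside needs adjusting. In $(1+n)$ dimensions the same computation gives $\lambda c^2\,\mathrm{sgn}(s)=n-2$, so $n=3$ fixes the normalization of $W$ and the constant in \eqref{eq:dHam}; the exponent $-1/2$ is incompatible with $E_0$ only when $n=2$.
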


\begin{proof}
As shown above, invariance under \eqref{eq:Js} gives $U_j=\phi_j(s)$,
$j=0,1,2$, for some functions $\phi_j$ of one variable.

Consider first $U_0=\phi_0(s)$. Since $\partial_\mu s=2y_\mu$, one has
$y^\mu\partial_\mu s=2s$, and the first condition of \eqref{eq:invconds}
becomes $2s\,\phi_0'(s)=-\phi_0(s)$, whose nonzero solutions are
$\phi_0=C_0|s|^{-1/2}$. To fix the constant $C_0$, substitute into
$\Boxop U_0+\lambda U_0^3=0$. For any function $\phi(s)$,
\[
\Boxop\phi(s)=4s\,\phi''(s)+8\phi'(s),
\]
because $\partial_\mu\phi=2\phi'y_\mu$ and $\Boxop s=8$. A short computation
then shows that $\Boxop\phi_0=-\lambda\phi_0^3$ holds precisely when
$\lambda C_0^2\,|s|/s=1$, that is, for
$U_0=W=\pm(\lambda s)^{-1/2}$ on $\Omega_\lambda$, which is \eqref{eq:Wsol}.

Next we verify \eqref{eq:dHam}. From $W^2=1/(\lambda s)$ one gets
$dW/ds=-W/(2s)$, hence
\[
\partial_\mu W=\frac{dW}{ds}\,\partial_\mu s=-\frac{W}{s}\,y_\mu,
\qquad
\partial_\mu W\,\partial^\mu W=\frac{W^2}{s^2}\,y_\mu y^\mu
=\frac{W^2}{s}=\lambda W^4,
\qquad
y^\mu\partial_\mu W=-W,
\]
where the last equality of the middle formula uses $1/s=\lambda W^2$. Note
that no separate real square roots of $\lambda$ and $s$ are needed.

On each connected component $\Omega_\lambda$, the map $s\mapsto W$ is smooth
and strictly monotone, hence invertible. A function of $s$ may therefore be
rewritten as a function of $W$, and we do so:
$U_1=\varphi_1(W)$, $U_2=\varphi_2(W)$. This is the form stated in the lemma.
For any function $\phi(W)$, the chain rule and $y^\mu\partial_\mu W=-W$ give
$y^\mu\partial_\mu\phi(W)=-W\dot\phi$, so the two remaining conditions of
\eqref{eq:invconds} become
$-W\dot\varphi_j=-\eta_j$, that is, \eqref{eq:inv1}--\eqref{eq:inv2}.

Finally, for any function $\phi(W)$,
\[
\Boxop\phi(W)=\ddot\phi\;\partial_\mu W\,\partial^\mu W+\dot\phi\;\Boxop W
=\lambda W^4\ddot\phi-\lambda W^3\dot\phi,
\]
using \eqref{eq:dHam} and $\Boxop W=-\lambda W^3$. Substituting
$U_1=\varphi_1(W)$ into \eqref{eq:E1} and $U_2=\varphi_2(W)$ into
\eqref{eq:E2}, and dividing both equations by $\lambda W^2$, gives
\eqref{eq:eul1}--\eqref{eq:eul2}.
\end{proof}

The constitutive normal forms of Theorem~\ref{thm:classification} were defined
locally on a connected interval of the $U$-axis. Accordingly, when a real
noninteger power of $W$ appears below, the sign in \eqref{eq:Wsol} is chosen
so that $W$ lies in that interval; in particular, for representatives
interpreted on $U>0$ we take $W=(\lambda s)^{-1/2}>0$. The negative branch may
be retained when the relevant constitutive functions and the powers of
$W$ in the solution formulas are real on $U<0$, as is the case for integer
exponents and for the logarithmic representatives written with $\ln|U|$.

Three comments are in order. First, \eqref{eq:dHam} is not an assumption but a \emph{consequence} of the reduction. In \cite{Fushchich1989} the first correction is sought in the form $v=f(w)$,
which turns the equation for the correction into an ordinary differential
equation only if $\partial_\mu w\,\partial^\mu w$ is a function of $w$; the
required relation is imposed there as a compatibility condition, and
$\{\Boxop w+\lambda w^3=0,\ \partial_\mu w\,\partial^\mu w=\lambda w^4\}$
is the d'Alembert--Hamilton system. For the explicit Lorentz-radial family,
Lemma~\ref{lem:reduction} explains why that substitution closes and why the
relation $\partial_\mu w\,\partial^\mu w=\lambda w^4$ appears: it is produced
by the joint Lorentz--dilation reduction. The relation is also consistent with
the scaling: under the dilation group, $w$ is multiplied by $e^{\tau}$ and each
differentiation contributes a further factor $e^{\tau}$, so both sides of
\eqref{eq:dHam} are multiplied by $e^{4\tau}$. This establishes a
group-theoretic derivation of the displayed Fushchych--Shtelen family, but it
does not assert that every solution of their d'Alembert--Hamilton system is
Lorentz--dilation invariant.

Second, the same operator $\Eop$ appears in both \eqref{eq:eul1} and
\eqref{eq:eul2}. This reflects Proposition~\ref{prop:triangular}: every
equation of the FS system after the first contains the same linearization of
the unperturbed equation, here reduced to $\Eop$. Moreover, $\Eop$ never
resonates: its indicial polynomial $m^2-2m+3$ has discriminant $-8$, so its
roots are $1\pm i\sqrt2$ and its homogeneous solutions are the oscillatory pair
\begin{equation}\label{eq:oscmodes}
W\cos\bigl(\sqrt2\ln|W|\bigr),
\qquad
W\sin\bigl(\sqrt2\ln|W|\bigr).
\end{equation}
These are exactly the directions ruled out by
\eqref{eq:inv1}--\eqref{eq:inv2}: they are not $\mathfrak{g}$-invariant, and
the invariant solution is the particular solution obtained by discarding them.
No such terms appear in the formulas of \cite{Fushchich1989}, which is
consistent with, and explained by, their invariance.

Third, and most importantly, the exceptional exponents of the classification
appear in the conditions \eqref{eq:inv1}--\eqref{eq:inv2}.

\begin{proposition}[Possible resonances of the reduction]\label{prop:solres}
The homogeneous solution of \eqref{eq:inv1} is $\varphi_1\propto W^q$, and that
of \eqref{eq:inv2} is $\varphi_2\propto W^{2q-1}$. The degree of an
inhomogeneous term of \eqref{eq:inv1} can coincide with the homogeneous
exponent $q$ only for $q\in\{0,1\}$, and in \eqref{eq:inv2} with $2q-1$ only
for $q\in\{\tfrac12,1\}$. A logarithmic term is generated at such a value
whenever the coefficient of the corresponding inhomogeneous term is nonzero.
In terms of \eqref{eq:Ndef}, the union of these values is
\[
N\in\{2,\tfrac52,3\},
\]
which is the exceptional set \eqref{eq:nonresvalues} of
Theorem~\ref{thm:classification}.
\end{proposition}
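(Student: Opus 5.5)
The plan is to treat \eqref{eq:inv1} and \eqref{eq:inv2} as first-order linear Euler-type ODEs in $W$ and to read off resonances from the monomial structure of their right-hand sides.

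First, the homogeneous parts. Write \eqref{eq:inv1} as $W\dot\varphi_1-q\varphi_1=aW+b$. Its homogeneous solution is $\varphi_1=cW^q$, since $W\frac{d}{dW}W^m=mW^m$. The operator $W\frac{d}{dW}-q$ maps $W^m$ to $(m-q)W^m$. Hence the forcing monomials $W^1$ (coefficient $a$) and $W^0$ (coefficient $b$) have the particular solutions $aW/(1-q)$ and $-b/q$. These exist precisely when $q\neq1$ and $q\neq0$, respectively. When $m=q$, the operator kills $W^q$, and one instead uses $W\frac{d}{dW}(W^q\ln|W|)-qW^q\ln|W|=W^q$. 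So a nonzero coefficient at a coinciding exponent forces a term $W^q\ln|W|$. This gives the resonant set $\{0,1\}$ for \eqref{eq:inv1}, with a logarithm at $q=1$ if $a\neq0$ and at $q=0$ if $b\neq0$.

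Second, \eqref{eq:inv2}. Here the operator is $W\frac{d}{dW}-(2q-1)$, so the homogeneous solution is $W^{2q-1}$. The forcing is $rW+d+h\varphi_1$, and the key step is to list the exponents present in $\varphi_1$. From the first step, in the nonresonant case these are $\{q,1,0\}$. In the resonant cases there are additionally logarithmic terms $W^q\ln|W|$, still at exponent $q$. Thus the forcing exponents of \eqref{eq:inv2} lie in $\{1,0,q\}$, possibly multiplied by powers of $\ln|W|$. Coincidence with $2q-1$ occurs only when $2q-1=1$, $2q-1=0$, or $2q-1=q$, that is, when $q\in\{1,\tfrac12,1\}$.

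The one step I expect to need care is that a log-multiplied forcing $W^{q}\ln|W|$ at the resonant value $q=1$ does not introduce a new exceptional value. It lies on the same exponent, and I will check that it only raises the power of the logarithm (to $\ln^2$), via $(W\tfrac{d}{dW}-m)(W^m\ln^k|W|)=kW^m\ln^{k-1}|W|$. The generation clause then follows from this same identity. At a coinciding exponent with nonzero coefficient, no pure power particular solution exists, and the log term is obligatory.

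Finally, I convert the resonant values via $N=q+2$ from \eqref{eq:Ndef}. The values $q\in\{0,1\}\cup\{\tfrac12,1\}$ give $N\in\{2,3,\tfrac52\}$, which is the exceptional set \eqref{eq:nonresvalues} of Theorem~\ref{thm:classification}.
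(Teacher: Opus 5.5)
Your proposal is correct and follows essentially the same route as the paper. Both read off the forcing degrees $1,0$ in \eqref{eq:inv1} and $1,0,q$ (the last from $h\varphi_1$) in \eqref{eq:inv2}, compare them with the homogeneous exponents $q$ and $2q-1$, and translate via $N=q+2$; your identity $(W\tfrac{d}{dW}-m)(W^m\ln^k|W|)=kW^m\ln^{k-1}|W|$ and your check of the $W\ln|W|$ forcing at $q=1$ simply make explicit the logarithm-generation clause that the paper's short proof leaves implicit.
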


\begin{proof}
The inhomogeneous terms of \eqref{eq:inv1} are $aW$ and $b$, of degrees $1$
and $0$ in $W$; coincidence with the degree $q$ of the homogeneous solution
occurs iff $q=1$ or $q=0$. In \eqref{eq:inv2} the inhomogeneous terms are
$rW$, the terms of $h\varphi_1$ --- whose degrees are $q$, $1$, and $0$ ---
and $d$; coincidence with $2q-1$ occurs iff $2q-1\in\{1,q,0\}$, that is, iff
$q=1$ or $q=\tfrac12$. The correspondence $N=q+2$ of \eqref{eq:Ndef} gives the
stated values.
\end{proof}

\subsection{The generic branch}

\begin{lemma}\label{lem:nomixing}
In case (i) of Theorem~\ref{thm:classification} the generator constants satisfy
$a=b=r=h=d=0$.
\end{lemma}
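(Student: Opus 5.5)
The plan is to substitute the normal form $F=AU^N$, $G=BU^{2N-3}$ of case (i), with $A\neq0$ and $N\notin\{2,\tfrac52,3\}$, directly into the determining equations \eqref{eq:FodeUser}--\eqref{eq:GodeUser} of Lemma~\ref{lem:secondDE}. We then split with respect to the linearly independent power functions on the interval where the representatives are defined. Lemma~\ref{lem:secondDE} guarantees that every projectable symmetry has the form \eqref{eq:secondgen}. It therefore suffices to show that, for this particular pair $(F,G)$, the affine constants are forced to vanish and $q=N-2$.

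First I will treat \eqref{eq:FodeUser}. Substituting $F=AU^N$ gives
\[
(N-q-2)AU^N+2\lambda aU^3+3\lambda bU^2=0 .
\]
Since $N\neq2,3$, the three exponents $N,3,2$ are distinct. The corresponding powers are linearly independent on any interval, so $q=N-2$ (using $A\neq0$), $a=0$ (using $\lambda\neq0$), and $b=0$.

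Next I will substitute these values into \eqref{eq:GodeUser}. With $2q+1=2N-3$, the term $UG'-(2N-3)G$ vanishes identically on $BU^{2N-3}$, and $(aU+b)F'=0$. What remains is
\[
-hAU^N+2\lambda rU^3+3\lambda dU^2=0 .
\]
Again $N\neq2,3$ separates $U^N$ from $U^3$ and $U^2$, so $h=0$, $r=0$, and $d=0$.

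The only point needing care, and the main potential obstacle, is the exponent bookkeeping. No accidental coincidence should occur among the powers that are split. In the $G$-equation the homogeneous power $U^{2N-3}$ cancels exactly, so its possible coincidence with $U^3$ (at $N=3$) or $U^2$ (at $N=\tfrac52$) never enters the splitting; this is precisely why the excluded value $N=\tfrac52$ matters only for the existence of the normal form, not for this lemma. I will also note that the lemma refers to the normal form of case (i). The raw family of Proposition~\ref{prop:genericfamily} does admit nonzero $a,b,r,h,d$, but those correspond to the removable terms eliminated by \eqref{eq:eqv1}--\eqref{eq:eqv3}. Thus the statement is read for the representative \eqref{eq:classgeneric}, in agreement with the generator displayed there.
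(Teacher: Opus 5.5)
Your proposal is correct and follows the paper's proof: substituting $F=AU^N$ into \eqref{eq:FodeUser} forces $q=N-2$ and $a=b=0$, and then \eqref{eq:GodeUser} forces $h=r=d=0$. The only difference is in the second step. The paper integrates the $G$-equation and notes that the particular terms in $U^N,U^3,U^2$, proportional to $h,r,d$, must vanish for $G$ to be $BU^{2N-3}$. You instead substitute $G=BU^{2N-3}$ directly and split, which, as you note, needs only $N\neq2,3$ and never uses $N\neq\tfrac52$.
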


\begin{proof}
For $F=AU^N$ with $N\notin\{2,3\}$, equation \eqref{eq:FodeUser} reads
$A(N-q-2)U^N+2\lambda aU^3+3\lambda bU^2=0$, forcing $q=N-2$ and $a=b=0$. With
$a=b=0$, \eqref{eq:GodeUser} becomes
$UG'-(2N-3)G=hAU^N-2\lambda rU^3-3\lambda dU^2$, whose solution is $BU^{2N-3}$
plus multiples of $U^N$, $U^3$ and $U^2$ with coefficients proportional to $h$,
$r$ and $d$ respectively; the normal form \eqref{eq:classgeneric} requires all
three to vanish.
\end{proof}

\begin{theorem}[Generic invariant approximate solution]\label{thm:generictower}
Let $N\notin\{2,\tfrac52,3\}$ and let $F,G$ be as in \eqref{eq:classgeneric}.
Then the $\mathfrak{g}$-invariant approximate solution of \eqref{eq:wavefull} is
\begin{equation}\label{eq:generictower}
U=W+\eps C_1W^{N-2}+\eps^2C_2W^{2N-5}+o(\eps^2),
\end{equation}
with $W$ given by \eqref{eq:Wsol} and
\begin{equation}\label{eq:genericconsts}
C_1=\frac{-A}{\lambda\,\Qpoly(N)},
\qquad
C_2=\frac{-\bigl(3\lambda C_1^2+ANC_1+B\bigr)}{2\lambda\,\Rpoly(N)},
\end{equation}
where
\begin{equation}\label{eq:QR}
\Qpoly(N)=N^2-6N+11,
\qquad
\Rpoly(N)=2N^2-12N+19 .
\end{equation}
Both $\Qpoly$ and $\Rpoly$ have discriminant $-8$ and are therefore never zero
for real $N$; the construction imposes no restriction beyond
\eqref{eq:nonresvalues}.
\end{theorem}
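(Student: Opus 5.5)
The plan is to combine Lemma~\ref{lem:reduction} with Lemma~\ref{lem:nomixing}. By Lemma~\ref{lem:nomixing}, in case (i) the generator constants satisfy $a=b=r=h=d=0$ and $q=N-2$. Then the invariance conditions \eqref{eq:inv1}--\eqref{eq:inv2} reduce to the homogeneous Euler equations
\[
W\dot\varphi_1=(N-2)\varphi_1,
\qquad
W\dot\varphi_2=(2N-5)\varphi_2 .
\]
On the chosen connected interval, with $W>0$ for noninteger powers, these give $\varphi_1=C_1W^{N-2}$ and $\varphi_2=C_2W^{2N-5}$ for constants $C_1,C_2$. No logarithms arise, since all inhomogeneous terms vanish. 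This is consistent with Proposition~\ref{prop:solres}, which only predicts logarithms at $N\in\{2,\tfrac52,3\}$ when the corresponding coefficients are nonzero. The leading term $U_0=W$ is already fixed by \eqref{eq:Wsol}.

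Next, the constants are determined by the remaining equations \eqref{eq:eul1}--\eqref{eq:eul2}, using $\Eop[W^m]=(m^2-2m+3)W^m$ from \eqref{eq:Eop}.

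\emph{First order.} Take $m=N-2$. Then $m^2-2m+3=(N-2)^2-2(N-2)+3=N^2-6N+11=\Qpoly(N)$. The right-hand side of \eqref{eq:eul1} is $-AW^N/(\lambda W^2)=-(A/\lambda)W^{N-2}$. Matching coefficients gives $C_1=-A/(\lambda\Qpoly(N))$.

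\emph{Second order.} The right-hand side of \eqref{eq:eul2} is the numerator
\[
3\lambda W\cdot C_1^2W^{2N-4}+ANW^{N-1}\cdot C_1W^{N-2}+BW^{2N-3},
\]
which is $(3\lambda C_1^2+ANC_1+B)W^{2N-3}$, divided by $\lambda W^2$. So it equals $-(3\lambda C_1^2+ANC_1+B)W^{2N-5}/\lambda$. With $m=2N-5$,
\[
m^2-2m+3=4N^2-20N+25-4N+10+3=4N^2-24N+38=2\Rpoly(N).
\]
This yields the stated $C_2$.

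Finally, I would check that the two discriminants are $36-44=-8$ and $144-152=-8$. Hence $\Qpoly$ and $\Rpoly$ have no real roots, so $C_1,C_2$ are always uniquely determined and no restriction beyond \eqref{eq:nonresvalues} arises.

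The only point needing care, rather than calculation, is that the Euler equations alone would allow the oscillatory modes \eqref{eq:oscmodes}. These must be excluded, and it is the first-order invariance conditions that fix the pure power form before the Euler equations are applied. So uniqueness of the $\mathfrak g$-invariant solution comes from \eqref{eq:inv1}--\eqref{eq:inv2}, and \eqref{eq:eul1}--\eqref{eq:eul2} serve only to fix the amplitudes consistently. I would also verify the overdetermined consistency: the power ansatz forced by invariance must actually satisfy \eqref{eq:eul1}--\eqref{eq:eul2}. This holds because both sides are single monomials of equal degree, which the scaling weights of Remark~\ref{rem:weights} guarantee, so the matching above succeeds.
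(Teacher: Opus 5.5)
Your proposal is correct and takes essentially the same route as the paper. Lemma~\ref{lem:nomixing} makes the invariant-surface conditions homogeneous and forces pure powers, and the reduced Euler equations with $\Eop[W^m]=(m^2-2m+3)W^m$ then fix $C_1$ and $C_2$ exactly as you computed: the second-order source collapses to a single power $W^{2N-3}$ and $\Eop[W^{2N-5}]=2\Rpoly(N)W^{2N-5}$. The discriminant check also matches the paper.
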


\begin{proof}
By Lemma~\ref{lem:nomixing} the conditions \eqref{eq:inv1}--\eqref{eq:inv2} are
homogeneous, $W\dot\varphi_1=(N-2)\varphi_1$ and
$W\dot\varphi_2=(2N-5)\varphi_2$, so $\varphi_1=C_1W^{N-2}$ and
$\varphi_2=C_2W^{2N-5}$. By \eqref{eq:Eop}, $\Eop[W^{N-2}]=\Qpoly(N)W^{N-2}$ and
$\Eop[W^{2N-5}]=2\Rpoly(N)W^{2N-5}$. Equation \eqref{eq:eul1} then gives
$\lambda\Qpoly(N)C_1=-A$. Substituting $\varphi_1$ into the right-hand side of
\eqref{eq:eul2} collapses it to a single power,
$-\bigl(3\lambda C_1^2+ANC_1+B\bigr)W^{2N-5}/\lambda$, and \eqref{eq:eul2}
gives the stated $C_2$. The discriminants are $36-44=-8$ and $144-152=-8$.
\end{proof}

The exponents in \eqref{eq:generictower} match the scaling pattern of
Remark~\ref{rem:weights}. Under the dilation group $y\mapsto e^{-\tau}y$ one
has $s\mapsto e^{-2\tau}s$, hence $W=\pm(\lambda s)^{-1/2}\mapsto e^{\tau}W$,
and therefore
\[
W^{N-2}\mapsto e^{(N-2)\tau}W^{N-2},
\qquad
W^{2N-5}\mapsto e^{(2N-5)\tau}W^{2N-5}:
\]
the three terms of the invariant solution scale with exactly the factors that
Remark~\ref{rem:weights} assigns to $U_0$, $U_1$, $U_2$.

\subsection{The resonant branches}

At the three exceptional exponents the invariance conditions
\eqref{eq:inv1}--\eqref{eq:inv2} are resonant and logarithms are forced. Write
\begin{equation}\label{eq:Ldef}
L:=\ln|W| .
\end{equation}

\begin{theorem}[Resonant invariant approximate solutions]\label{thm:resonanttower}
For the resonant cases (ii), (iii), (iv) of Theorem~\ref{thm:classification},
the $\mathfrak{g}$-invariant approximate solution of \eqref{eq:wavefull} is
$U=W+\eps\varphi_1+\eps^2\varphi_2+o(\eps^2)$ with $W$ given by \eqref{eq:Wsol}
and
\begin{enumerate}[label=\textnormal{(\alph*)}]
\item for $N=\tfrac52$, with $F,G$ as in \eqref{eq:classfivehalf},
\begin{equation}\label{eq:towerfivehalf}
\varphi_1=-\frac{4A}{9\lambda}W^{1/2},
\qquad
\varphi_2=-\frac{B}{3\lambda}L+\frac{14A^2-18\lambda B}{81\lambda^2};
\end{equation}

\item for $N=3$, with $F,G$ as in \eqref{eq:classcubiclog},
\begin{equation}\label{eq:towercubiclog}
\varphi_1=-\frac{A}{2\lambda}WL,
\qquad
\varphi_2=p\,W\bigl(L^2-1\bigr)+\frac{A^2}{4\lambda^2}WL,
\qquad
p=\frac{3A^2}{8\lambda^2}-\frac{B}{2\lambda};
\end{equation}

\item for $N=2$, with $F,G$ as in \eqref{eq:classquadraticlog},
\begin{equation}\label{eq:towerquadraticlog}
\varphi_1=-\frac{A}{3\lambda}L-\frac{2A}{9\lambda},
\qquad
\varphi_2=\frac{2A^2-27\lambda B_0}{162\lambda^2}\,W^{-1}.
\end{equation}
\end{enumerate}
\end{theorem}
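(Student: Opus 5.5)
The plan is to apply Lemma~\ref{lem:reduction} separately to each resonant case. Every $\mathfrak g$-invariant solution has $U_0=W$ and $U_j=\varphi_j(W)$, where $\varphi_1,\varphi_2$ must satisfy both the first-order invariance equations \eqref{eq:inv1}--\eqref{eq:inv2} and the Euler equations \eqref{eq:eul1}--\eqref{eq:eul2}. The invariance equations are linear first-order ODEs with the generator constants $(a,q,b,r,h,d)$ read off from the generators in Theorem~\ref{thm:classification}. Their general solutions contain one integration constant each, and the resonant forcing produces the logarithms predicted by Proposition~\ref{prop:solres}. The Euler equations are then used as algebraic conditions fixing these integration constants. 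This is consistent with the observation after \eqref{eq:oscmodes} that $\Eop$ has no real indicial roots, so no free homogeneous solution survives.

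For this I need $\Eop$ on the relevant basis, all immediate from \eqref{eq:Eop} and $\dot L=1/W$:
\[
\Eop[1]=3,\quad \Eop[L]=3L-2,\quad \Eop[W]=2W,\quad \Eop[WL]=2WL,\quad \Eop[WL^2]=2WL^2+2W,
\]
\[
\Eop[W^{1/2}]=\tfrac94W^{1/2},\qquad \Eop[W^{-1}]=6W^{-1}.
\]

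\emph{Case (a), $N=\tfrac52$.} Here $q=\tfrac12$, $a=b=r=h=0$ and $d=-B/(3\lambda)$. Then \eqref{eq:inv1} gives $\varphi_1=C_1W^{1/2}$, and \eqref{eq:eul1} gives $\tfrac94\lambda C_1=-A$. Next, \eqref{eq:inv2} reads $W\dot\varphi_2=d$, so $\varphi_2=dL+K$. In \eqref{eq:eul2} the right-hand side becomes $-(3\lambda C_1^2+\tfrac52AC_1)/\lambda-BL/\lambda$. Matching the $L$ coefficients reproduces $3d=-B/\lambda$, which is a consistency check. Matching constants fixes $K$, and this should give the constant $(14A^2-18\lambda B)/(81\lambda^2)$.

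\emph{Case (b), $N=3$.} Here $q=1$, $a=-A/(2\lambda)$, $b=0$, $r=A^2/(4\lambda^2)$ and $h=-(3A^2-4\lambda B)/(2\lambda A)$. Then \eqref{eq:inv1} gives $\varphi_1=aWL+cW$. In \eqref{eq:eul1} the $WL$ terms agree automatically, and the $W$ terms force $c=0$. Next, \eqref{eq:inv2} gives $\varphi_2=rWL+\tfrac12haWL^2+kW$. I will check that $\tfrac12ha=p$ and fix $k$ from the $W$-coefficient of \eqref{eq:eul2}; the expected value is $k=-p$. The $WL^2$ and $WL$ coefficients of \eqref{eq:eul2} must then agree identically.

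\emph{Case (c), $N=2$.} Here $q=0$, $a=0$, $b=-A/(3\lambda)$ and $r=h=d=0$, so $\varphi_1=bL+c$ and $\varphi_2=kW^{-1}$. In \eqref{eq:eul1} the $L$ terms agree, and the constant terms give $c=-2A/(9\lambda)$. In \eqref{eq:eul2} the right-hand side involves $3\lambda W\varphi_1^2$, $F'(W)\varphi_1=A(2WL+W)\varphi_1$ and $G$. Divided by $\lambda W^2$, its $W^{-1}L^2$ and $W^{-1}L$ parts must cancel, using the specific coefficients $\tfrac{A^2}{3\lambda}$ in \eqref{eq:classquadraticlog}. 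The surviving $W^{-1}$ coefficient equals $6k$, which should give the stated $k$.

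The main obstacle is that \eqref{eq:eul2} is overdetermined once $\varphi_2$ has been fixed up to one constant by \eqref{eq:inv2}. The nontrivial content therefore lies in the cancellation of the logarithmic terms, which relies on the exact constants of Theorem~\ref{thm:classification}, particularly $h$ in case (b) and the $G$ in case (c). I would carry out these coefficient comparisons carefully, power of $L$ by power of $L$, and compare against direct substitution into \eqref{eq:E1}--\eqref{eq:E2} as a cross-check.
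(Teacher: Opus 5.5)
Your proposal is correct and follows the paper's proof essentially step for step. You read the generator constants off Theorem~\ref{thm:classification}, solve the invariant-surface equations \eqref{eq:inv1}--\eqref{eq:inv2} including their resonant logarithms, and then use \eqref{eq:eul1}--\eqref{eq:eul2}, with the same values of $\Eop$ on $1,L,W,WL,WL^2,W^{1/2},W^{-1}$, to fix the integration constants and confirm that the logarithmic terms cancel. The values you anticipate all check out: $K=(14A^2-18\lambda B)/(81\lambda^2)$, $\tfrac12 ha=p$, $k=-p$, $c=-2A/(9\lambda)$, and $6k=(2A^2-27\lambda B_0)/(27\lambda^2)$.
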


\begin{proof}
In each case the generator constants are read off
Theorem~\ref{thm:classification}: for (a), $q=\tfrac12$, $a=b=r=h=0$ and
$d=-B/(3\lambda)$; for (b), $q=1$, $a=-A/(2\lambda)$, $b=d=0$,
$r=A^2/(4\lambda^2)$ and $h=(4\lambda B-3A^2)/(2\lambda A)$; for (c), $q=0$,
$b=-A/(3\lambda)$ and $a=r=h=d=0$.

For (a), \eqref{eq:inv1} is $W\dot\varphi_1=\tfrac12\varphi_1$, so
$\varphi_1=C_1W^{1/2}$, and $\Eop[W^{1/2}]=\tfrac94W^{1/2}$ gives
$C_1=-4A/(9\lambda)$ from \eqref{eq:eul1}. Since $2q-1=0$, equation
\eqref{eq:inv2} degenerates to the quadrature $W\dot\varphi_2=d$, whence
$\varphi_2=-\tfrac{B}{3\lambda}L+C_2$: the logarithmic form is already
dictated by the invariant-surface equation; the reduced wave equation, whose
source also contains $\ln|W|$, confirms its coefficient and fixes the
additive constant. Using $\Eop[L]=3L-2$ and $\Eop[1]=3$ in
\eqref{eq:eul2}, the terms in $L$ cancel and the constant terms give
$C_2=(14A^2-18\lambda B)/(81\lambda^2)$.

For (b), $q=1$ and \eqref{eq:inv1} reads $W\dot\varphi_1-\varphi_1=aW$, which is
resonant; its solutions are $\varphi_1=aWL+\kappa W$, and \eqref{eq:eul1} with
$\Eop[WL]=2WL$ and $\Eop[W]=2W$ forces $\kappa=0$. Then $2q-1=1$ and
\eqref{eq:inv2} reads $W\dot\varphi_2-\varphi_2=rW+h\varphi_1$, whose forcing
contains both $W$ and $WL$ against the homogeneous solution $W$; writing
$\varphi_2=pWL^2+p_1WL+mW$ gives $p=-hA/(4\lambda)$ and $p_1=A^2/(4\lambda^2)$.
Substituting into \eqref{eq:eul2}, with $\Eop[WL^2]=2WL^2+2W$, reproduces the
same $p$ and $p_1$ and additionally fixes $m=-p$. Eliminating $h$ gives the
stated $p$.

For (c), $q=0$ and \eqref{eq:inv1} degenerates to $W\dot\varphi_1=b$, giving
$\varphi_1=-\tfrac{A}{3\lambda}L+\kappa$, and \eqref{eq:eul1} fixes
$\kappa=-2A/(9\lambda)$. Since $2q-1=-1$ and $r=h=d=0$, equation
\eqref{eq:inv2} is the \emph{homogeneous} equation $W\dot\varphi_2+\varphi_2=0$,
so $\varphi_2=C_2W^{-1}$, and $\Eop[W^{-1}]=6W^{-1}$ in \eqref{eq:eul2} gives
the stated constant.
\end{proof}

Two features of Theorem~\ref{thm:resonanttower} are worth isolating.

It is interesting that in Case (c), while both $F$ and $G$ in \eqref{eq:classquadraticlog}
are logarithmic, and so is $\varphi_1$, nevertheless $\varphi_2$ is a pure power.
The reason is a cancellation in the source of \eqref{eq:eul2}: with
$\varphi_1=-\tfrac{A}{3\lambda}\bigl(L+\tfrac23\bigr)$,
\[
3\lambda W\varphi_1^2+F'(W)\varphi_1+G(W)
=\frac{A^2}{3\lambda}W\Bigl[
\bigl(L+\tfrac23\bigr)^2-(2L+1)\bigl(L+\tfrac23\bigr)+L^2+L
\Bigr]+B_0W,
\]
and the bracket collapses identically to the constant $-\tfrac29$, every term in
$L^2$ and $L$ cancelling. The logarithmic hierarchy of the constitutive
functions therefore does not propagate to the solution: the second-order
correction of the invariant solution is rigid.

\subsection{Recovery of the Fushchych--Shtelen solutions}

The first-order rows of Theorems~\ref{thm:generictower} and
\ref{thm:resonanttower} are the normal-form representatives of the solutions on
the last page of \cite{Fushchich1989}. The full formulas are recovered by
retaining the affine mixing constants in Proposition~\ref{prop:FSfirst}.

\begin{proposition}[Full first-order table]\label{prop:FSrecovery}
Use the sign convention of \cite{Fushchich1989} and put
\[
k=-q,
\qquad b_{\rm FS}=-a,
\qquad c_{\rm FS}=-b.
\]
For the Lorentz-radial leading field \eqref{eq:Wsol}, the joint
Lorentz--dilation invariant correction is
\begin{align*}
\varphi_1={}&-\frac{A}{\lambda(k^2+2k+3)}W^{-k}
-\frac{b_{\rm FS}}{k+1}W-\frac{c_{\rm FS}}{k},
&&k\neq0,-1,\\[1ex]
\varphi_1={}&-c_{\rm FS}L-b_{\rm FS}W
-\frac13\left(2c_{\rm FS}+\frac{A}{\lambda}\right),
&&k=0,\\[1ex]
\varphi_1={}&-W\left(\frac{A}{2\lambda}+b_{\rm FS}L\right)+c_{\rm FS},
&&k=-1.
\end{align*}
These are exactly the three correction formulas displayed in
\cite{Fushchich1989}. Setting $b_{\rm FS}=c_{\rm FS}=0$ gives the normalized
generic row of Theorem~\ref{thm:generictower}. In the two resonant rows the
normal forms of Theorem~\ref{thm:classification} retain only the logarithmic
term of $F$, which is carried by $c_{\rm FS}$ at $k=0$ and by $b_{\rm FS}$ at
$k=-1$; the normalized rows of Theorem~\ref{thm:resonanttower} therefore
correspond to $A=b_{\rm FS}=0$ with $3\lambda c_{\rm FS}$ in the role of the
constant of \eqref{eq:classquadraticlog}, and to $A=c_{\rm FS}=0$ with
$2\lambda b_{\rm FS}$ in the role of the constant of \eqref{eq:classcubiclog},
respectively.
\end{proposition}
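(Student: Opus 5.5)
The approach is to run the reduction of Lemma~\ref{lem:reduction} at first order only. I keep the full first-order generator of Proposition~\ref{prop:FSfirst}, with $F$ given by \eqref{eq:firstgeneric}--\eqref{eq:firstq1}, and do not pass to normal forms. The leading field is $W$ from \eqref{eq:Wsol}. The correction $\varphi_1$ must then satisfy both the invariant-surface equation \eqref{eq:inv1}, $W\dot\varphi_1=aW+q\varphi_1+b$, and the reduced wave equation \eqref{eq:eul1}, $\Eop[\varphi_1]=-F(W)/(\lambda W^2)$.

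The plan is to solve \eqref{eq:inv1} first, since it is a linear first-order Euler equation. Its general solution carries one free homogeneous constant $\kappa W^q$. That constant is then fixed by \eqref{eq:eul1}, using $\Eop[W^m]=(m^2-2m+3)W^m$ together with $\Eop[L]=3L-2$, $\Eop[WL]=2WL$ and $\Eop[1]=3$.

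The three cases split as follows.

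For $q\neq0,1$, \eqref{eq:inv1} gives
\[
\varphi_1=\kappa W^q-\frac{a}{q-1}W-\frac{b}{q}.
\]
Substituting \eqref{eq:firstgeneric} into \eqref{eq:eul1}, the right-hand side is
\[
-\frac{A}{\lambda}W^{q}-\frac{2a}{q-1}W-\frac{3b}{q}.
\]
The $W$ and constant terms already match: $\Eop[W]=2W$ and $\Eop[1]=3$. The $W^q$ term gives $\kappa(q^2-2q+3)=-A/\lambda$.

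For $q=0$, one has $\varphi_1=aW\cdot(-1)+bL+\kappa$, i.e.\ $\varphi_1=-aW+bL+\kappa$. With $F$ from \eqref{eq:firstq0}, comparing constant terms fixes $\kappa=(A-3\lambda b\cdot\tfrac23\cdots)$; I will let $\Eop[L]=3L-2$ determine it, with the expected result $\kappa=-\tfrac13(A/\lambda-2b)$.

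For $q=1$, one has $\varphi_1=aWL-b+\kappa W$. With $F$ from \eqref{eq:firstq1}, the relation $\Eop[WL]=2WL$ fixes $\kappa=-A/(2\lambda)$.

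Finally I translate via $k=-q$, $b_{\rm FS}=-a$, $c_{\rm FS}=-b$. This uses $q^2-2q+3=k^2+2k+3$, $-a/(q-1)=-b_{\rm FS}/(k+1)$ and $-b/q=-c_{\rm FS}/k$. I then compare with the three displayed formulas of \cite{Fushchich1989}.

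The normalization statements are read off directly. Setting $a=b=0$ recovers the row $C_1=-A/(\lambda\Qpoly(N))$ of Theorem~\ref{thm:generictower}, since $\Qpoly(N)=q^2-2q+3$ with $N=q+2$. In the resonant rows, the only surviving nonremovable term of $F$ is the logarithm. Its coefficient is $-3\lambda b=3\lambda c_{\rm FS}$ at $k=0$ and $-2\lambda a=2\lambda b_{\rm FS}$ at $k=-1$, while the $AU^2$ and $AU^3$ terms are removable by \eqref{eq:eqv2}--\eqref{eq:eqv3}. This matches (c) and (b) of Theorem~\ref{thm:resonanttower} with $A=0$.

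The main obstacle is bookkeeping rather than conceptual. It lies in fixing the additive constants in the resonant cases, where the sign of $L=\ln|W|$ versus $\ln|U|$ and the action $\Eop[L]=3L-2$ must be tracked carefully. It also lies in confirming the match with the 1989 formulas up to the removable affine terms. I would check the $k=0$ constant first, by direct substitution.
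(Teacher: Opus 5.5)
Your route is the same as the paper's: integrate \eqref{eq:inv1}, fix the homogeneous constant from \eqref{eq:eul1}, then substitute $k=-q$, $b_{\rm FS}=-a$, $c_{\rm FS}=-b$. Your generic and $q=1$ computations are correct, and so are the normalization remarks, including $\Qpoly(N)=q^2-2q+3$ and the identification of the logarithmic coefficients $3\lambda c_{\rm FS}$ and $2\lambda b_{\rm FS}$.

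The one step that fails as written is the $q=0$ integration.
\begin{itemize}
\item At $q=0$, \eqref{eq:inv1} reads $W\dot\varphi_1=aW+b$. This gives $\varphi_1=aW+bL+\kappa$, not $-aW+bL+\kappa$. The correct sign also agrees with the generic coefficient $a/(1-q)$ evaluated at $q=0$.
\item With your sign, the $W$-terms of \eqref{eq:eul1} do not balance: you get $\Eop[-aW]=-2aW$, while $-F(W)/(\lambda W^2)=-A/\lambda+2aW+3bL$ requires $+2aW$.
\item The translation would also then produce $+b_{\rm FS}W$ instead of the required $-b_{\rm FS}W$ in the $k=0$ row.
\end{itemize}

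With the correct sign, the $W$ and $L$ terms balance through $\Eop[W]=2W$ and $\Eop[L]=3L-2$. The constant terms then give $3\kappa-2b=-A/\lambda$, that is,
\[
\kappa=-\tfrac13(A/\lambda-2b)=-\tfrac13(2c_{\rm FS}+A/\lambda).
\]
This is the value you expected, and it should replace your unfinished expression for $\kappa$.

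A minor correction for $q=1$: the constant $\kappa$ is fixed by the $W$-coefficient through $\Eop[W]=2W$, giving $2\kappa=-A/\lambda$. The identity $\Eop[WL]=2WL$ only guarantees that the logarithmic term balances without leaving a stray multiple of $W$.
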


\begin{proof}
The invariant-surface equation \eqref{eq:inv1} gives the affine terms directly.
For $q\neq0,1$ its general invariant solution is
\[
\varphi_1=CW^q+\frac{a}{1-q}W-\frac{b}{q},
\]
and \eqref{eq:eul1} fixes
$C=-A/[\lambda(q^2-2q+3)]$. The equations at $q=0$ and $q=1$ are integrated
before division and give the two logarithmic formulas. Substitution of
$k=-q$, $b_{\rm FS}=-a$, and $c_{\rm FS}=-b$ yields the displayed table.
\end{proof}

The second-order rows $\varphi_2$ are therefore the continuation of the
normalized first-order table one order up. The reduction is uniform in the
order: by \eqref{eq:dHam} the identity
$\Boxop\phi(W)=\lambda W^4\ddot\phi-\lambda W^3\dot\phi$ holds for every
function $\phi$, so no new differential constraint arises at later orders and
the same operator $\Eop$ governs every correction.

\subsection{Degenerate first-order perturbation branches}

The normal forms of Proposition~\ref{prop:degenerateFS} also reduce explicitly.
In these cases the first correction vanishes: the invariant-surface equation
allows a power of $W$, but the homogeneous Euler equation
$\Eop[\varphi_1]=0$ removes it for real scaling weights.

Theorem~\ref{thm:degeneratetower} below does not appear in
\cite{Fushchich1989}, which treats first-order perturbations only. Upon
renaming $\delta=\eps^2$, the perturbed equation becomes first-order in
$\delta$, and the resulting formulas for $\varphi_2$ reduce to the first-order
table of Proposition~\ref{prop:FSrecovery}; the genuinely second-order content
of the theorem is the proof that the order-$\eps$ correction vanishes,
together with the underlying classification of
Proposition~\ref{prop:degenerateFS}.

\begin{theorem}[Degenerate invariant approximate solutions]\label{thm:degeneratetower}
Let $F=0$ and let $G$ be one of the normal forms in
Proposition~\ref{prop:degenerateFS}. Then the corresponding
$\mathfrak g$-invariant approximate solution is
\[
U=W+\eps^2\varphi_2(W)+o(\eps^2),
\]
where
\begin{enumerate}[label=\textnormal{(\alph*)}]
\item for $M\notin\{2,3\}$ and $G(U)=BU^M$,
\[
\varphi_2=-\frac{B}{\lambda\,(M^2-6M+11)}W^{M-2};
\]
\item for $G(U)=BU^2\ln|U|$,
\[
\varphi_2=-\frac{B}{3\lambda}\left(L+\frac23\right);
\]
\item for $G(U)=BU^3\ln|U|$,
\[
\varphi_2=-\frac{B}{2\lambda}WL.
\]
\end{enumerate}
\end{theorem}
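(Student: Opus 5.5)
Throughout, I will apply Lemma~\ref{lem:reduction} with $F=0$, reading off the generator constants from Proposition~\ref{prop:degenerateFS}. By the proof of that proposition, $F=0$ forces $a=b=0$ and $M=2q+1$. In each of the three cases, $(r,d)$ takes the value displayed there: $r=d=0$ in (a), $d=-B/(3\lambda)$ with $r=0$ in (b), and $r=-B/(2\lambda)$ with $d=0$ in (c). The constant $h$ stays free, but it will turn out to be irrelevant.

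\emph{Step 1: the first correction vanishes.} With $a=b=0$, the invariant-surface equation \eqref{eq:inv1} becomes $W\dot\varphi_1=q\varphi_1$, so $\varphi_1=\kappa W^q$. Since $F=0$, equation \eqref{eq:eul1} is the homogeneous equation $\Eop[\varphi_1]=0$. By \eqref{eq:Eop} this gives $\kappa(q^2-2q+3)W^q=0$. The polynomial $q^2-2q+3$ has discriminant $-8$, so it has no real root, and therefore $\kappa=0$ for every real weight $q$. This is the one genuinely second-order point of the argument, and the only place where reality of the scaling weight is used. It is also the step I would check most carefully, since it is exactly the non-resonance of $\Eop$ observed after Lemma~\ref{lem:reduction}.

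\emph{Step 2: reduced equations for $\varphi_2$.} Substituting $\varphi_1=0$ removes the $h\varphi_1$ term from \eqref{eq:inv2}, which becomes
\[
W\dot\varphi_2=rW+(M-2)\varphi_2+d.
\]
Using $2q-1=M-2$, equation \eqref{eq:eul2} becomes $\Eop[\varphi_2]=-G(W)/(\lambda W^2)$. I then solve these case by case.

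In case (a), $r=d=0$, so $\varphi_2=CW^{M-2}$. Then $\Eop[W^{M-2}]=\bigl((M-2)^2-2(M-2)+3\bigr)W^{M-2}=(M^2-6M+11)W^{M-2}$, and matching with the source $-BW^{M-2}/\lambda$ gives the stated constant.

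In case (b), $M-2=0$, so the invariance condition is the quadrature $W\dot\varphi_2=d$, whence $\varphi_2=dL+C$. Using $\Eop[L]=3L-2$ and $\Eop[1]=3$ (already used in Theorem~\ref{thm:resonanttower}) against the source $-BL/\lambda$, the $L$-terms confirm $d=-B/(3\lambda)$. The constant terms then give $C=2d/3=-2B/(9\lambda)$, which is the stated formula.

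In case (c), $M-2=1$, so $W\dot\varphi_2-\varphi_2=rW$ is resonant and gives $\varphi_2=rWL+\kappa W$. Using $\Eop[WL]=2WL$ and $\Eop[W]=2W$ against the source $-BWL/\lambda$, the $WL$-terms reproduce $r=-B/(2\lambda)$ and the $W$-terms force $\kappa=0$.

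Finally, I would note that because $\varphi_1=0$, the added symmetry $U_1\partial_{U_2}$ (that is, the constant $h$) acts trivially on the invariant solution. The result is therefore independent of $h$, which justifies the uniqueness asserted in the statement.
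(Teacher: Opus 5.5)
Your proposal is correct and follows the paper's proof essentially step for step. You first kill $\varphi_1$: the invariant-surface equation gives $\kappa W^q$, and $\Eop$ has no real indicial root, so $\kappa=0$. You then solve \eqref{eq:inv2} case by case and fix the remaining constant from \eqref{eq:eul2} via $\Eop[L]=3L-2$, $\Eop[WL]=2WL$, and $\Eop[W]=2W$; your remark that the free constant $h$ drops out because $\varphi_1=0$ is a useful detail the paper leaves implicit.
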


\begin{proof}
In all three cases \eqref{eq:eul1} is homogeneous. Compatibility with the
invariant-surface equation forces $\varphi_1=0$. In case (a),
\eqref{eq:inv2} gives $\varphi_2=CW^{M-2}$ and
$\Eop[W^{M-2}]=(M^2-6M+11)W^{M-2}$, which fixes $C$. In case (b),
\eqref{eq:inv2} gives
$\varphi_2=-B(3\lambda)^{-1}L+C$; using
$\Eop[L]=3L-2$ in \eqref{eq:eul2} fixes
$C=-2B/(9\lambda)$. In case (c), the resonant invariance equation gives
$\varphi_2=-B(2\lambda)^{-1}WL+C W$, and
$\Eop[WL]=2WL$, $\Eop[W]=2W$ force $C=0$.
\end{proof}

\subsection{The intersection with the BGI construction}

One solution deserves separate mention. By Corollary~\ref{cor:BGInormal},
within the vertical BGI continuation class the dilation is BGI-stable to second
order exactly on \eqref{eq:BGInormalFG}; this is case (iii) of
Theorem~\ref{thm:classification} at the locked value
$B=C^2/(2\lambda)$. At that value the generator constants of
Theorem~\ref{thm:resonanttower}(b) become
\[
a=-\frac{C}{2\lambda},
\qquad
r=\frac{C^2}{4\lambda^2},
\qquad
h=\frac{4\lambda B-3C^2}{2\lambda C}=-\frac{C}{2\lambda}=a,
\qquad
b=d=0,
\]
so that $D^{(2)}$ is precisely the BGI lift \eqref{eq:BGInormallift} and the
locking condition $h=a$ of \eqref{eq:BGIlockFSconstants} is met. The
corresponding invariant solution is therefore approximately invariant in both
frameworks simultaneously.

\begin{theorem}[The intersection solution]\label{thm:intersection}
Let $F$ and $G$ be as in \eqref{eq:BGInormalFG} and put $\mu:=\eps C/(2\lambda)$.
Then the $\mathfrak{g}$-invariant approximate solution of \eqref{eq:wavefull} is
\begin{equation}\label{eq:intersectionsol}
U=W-\mu\,W\ln|W|
+\frac{\mu^2}{2}W\Bigl[\bigl(\ln|W|+1\bigr)^2-2\Bigr]+o(\eps^2),
\end{equation}
and this expression is the second-order Taylor polynomial in $\mu$ of the
anomalous power law
\begin{equation}\label{eq:anomalous}
U=\Bigl(1-\frac{\mu^2}{2}\Bigr)\operatorname{sgn}(W)\,|W|^{\,1-\mu+\mu^2}+O(\mu^3),
\end{equation}
which is real-valued on both branches of \eqref{eq:Wsol}.
\end{theorem}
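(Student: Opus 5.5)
The plan is to obtain \eqref{eq:intersectionsol} by specializing Theorem~\ref{thm:resonanttower}(b), and then to check \eqref{eq:anomalous} by a direct Taylor expansion in $\mu$.

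\emph{Step 1 (specialization).} By Corollary~\ref{cor:BGInormal}, the pair \eqref{eq:BGInormalFG} is case (iii) of Theorem~\ref{thm:classification} with $A=C$ and $B=C^2/(2\lambda)$. As computed just before the statement, the generator constants are $a=h=-C/(2\lambda)$, $r=C^2/(4\lambda^2)$ and $b=d=0$. The hypotheses of Theorem~\ref{thm:resonanttower}(b) therefore hold, and I read off $\varphi_1=-\frac{C}{2\lambda}WL$. The constant $p$ becomes
\[
p=\frac{3C^2}{8\lambda^2}-\frac{C^2}{4\lambda^2}=\frac{C^2}{8\lambda^2},
\]
and $p_1=C^2/(4\lambda^2)$. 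Multiplying by the powers of $\eps$ and writing $\mu=\eps C/(2\lambda)$ gives
\[
\eps\varphi_1=-\mu WL,
\qquad
\eps^2\varphi_2=\frac{\mu^2}{2}W(L^2-1)+\mu^2WL=\frac{\mu^2}{2}W\bigl[(L+1)^2-2\bigr].
\]
This is \eqref{eq:intersectionsol}, with $L=\ln|W|$ as in \eqref{eq:Ldef}. Since $\mu$ is a fixed multiple of $\eps$, the remainder $o(\eps^2)$ may equivalently be written in terms of $\mu$.

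\emph{Step 2 (anomalous power).} On $\Omega_\lambda$ we have $W\neq0$, so I write
\[
\operatorname{sgn}(W)\,|W|^{1-\mu+\mu^2}=W\exp\bigl((-\mu+\mu^2)L\bigr).
\]
This expression is manifestly real on both sign branches of \eqref{eq:Wsol}, which settles the last claim of the theorem. Expanding the exponential to second order gives
\[
W\bigl[1-\mu L+\mu^2\bigl(L+\tfrac12L^2\bigr)+O(\mu^3)\bigr].
\]
I then multiply by $1-\mu^2/2$ and collect the $\mu^2$ coefficient, which is $\tfrac12(L^2+2L-1)=\tfrac12\bigl[(L+1)^2-2\bigr]$. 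This agrees term by term with \eqref{eq:intersectionsol}.

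\emph{Main obstacle.} There is no real analytic obstacle; the work is sign and factor bookkeeping. The key point is the cancellation of the $-B/(2\lambda)$ contribution in $p$ at the locked value \eqref{eq:BGIlockB}. The other item to check is that the $\mu^2$ coefficient of the expansion absorbs the $-\tfrac12\mu^2$ prefactor correctly. I would double-check the first of these by substituting $\varphi_1,\varphi_2$ directly into \eqref{eq:eul2}, using $\Eop[WL^2]=2WL^2+2W$, $\Eop[WL]=2WL$ and $\Eop[W]=2W$, rather than relying only on Theorem~\ref{thm:resonanttower}.
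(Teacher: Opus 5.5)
Your proposal is correct and follows the paper's proof: you specialize Theorem~\ref{thm:resonanttower}(b) at $A=C$, $B=C^2/(2\lambda)$ to get $p=C^2/(8\lambda^2)$ and $\eps^2\varphi_2=\tfrac{\mu^2}{2}W\bigl[(L+1)^2-2\bigr]$. You then expand $W\exp\bigl((-\mu+\mu^2)L\bigr)$ and multiply by $1-\mu^2/2$. Your optional check via \eqref{eq:eul2} with the stated values of $\Eop[WL^2]$, $\Eop[WL]$, $\Eop[W]$ also goes through, and it is a sensible safeguard the paper leaves implicit.
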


\begin{proof}
Set $A=C$ and $B=C^2/(2\lambda)$ in \eqref{eq:towercubiclog}. Then
$p=3C^2/(8\lambda^2)-C^2/(4\lambda^2)=C^2/(8\lambda^2)$, so that
\[
\varphi_2=\frac{C^2}{8\lambda^2}W\bigl(L^2-1\bigr)+\frac{C^2}{4\lambda^2}WL
=\frac{C^2}{8\lambda^2}W\bigl(L^2+2L-1\bigr),
\]
which gives \eqref{eq:intersectionsol} on substituting $\eps\varphi_1=-\mu WL$,
$\eps^2\varphi_2=\tfrac{\mu^2}{2}W(L^2+2L-1)$ and $L^2+2L-1=(L+1)^2-2$. For
\eqref{eq:anomalous}, note that
$\operatorname{sgn}(W)|W|^{1-\mu+\mu^2}=W\exp\bigl[(-\mu+\mu^2)L\bigr]
=W\bigl[1-\mu L+\mu^2L+\tfrac{\mu^2}{2}L^2\bigr]+O(\mu^3)$,
with $L=\ln|W|$ as in \eqref{eq:Ldef},
and multiply by $1-\tfrac{\mu^2}{2}$.
\end{proof}

Equation \eqref{eq:anomalous} is the natural reading of the logarithmic branch.
The logarithms in \eqref{eq:classcubiclog} and \eqref{eq:BGInormalFG} are not a
pathology of the classification: through second order they are the perturbative
expansion of a shifted scaling exponent, $1\mapsto1-\mu+\mu^2$, carried by the
leading invariant solution. The dilation is broken by the perturbation, and what
survives is the same dilation with an anomalous weight. This is exactly the branch seen by the vertical BGI continuation, and
Theorem~\ref{thm:correspondence} identifies it as the image of a single
approximate characteristic; the FS system, by contrast, detects the whole of
Theorem~\ref{thm:classification}, of which \eqref{eq:BGInormalFG} is the
resonant slice $q=1$.

\begin{remark}[Verification]\label{rem:sec5verify}
Lemma~\ref{lem:reduction} and Theorems~\ref{thm:generictower},
\ref{thm:resonanttower}, \ref{thm:degeneratetower}, and
\ref{thm:intersection} were verified symbolically:
for each branch, the three residuals of the FS system
\eqref{eq:E0}--\eqref{eq:E2} and both invariant-surface conditions
\eqref{eq:inv1}--\eqref{eq:inv2} were evaluated on \eqref{eq:Wsol} in
$(1+3)$-dimensional Minkowski coordinates and found to vanish identically.
\end{remark}

\subsection{Validity of the Lorentz--radial expansion}\label{sec:radialvalidity}

The FS equations determine the coefficients of a formal expansion
\begin{equation}\label{eq:formalvalidity}
U=U_0+\eps U_1+\eps^2U_2+o(\eps^2).
\end{equation}
Here, \emph{formal} means that substitution of \eqref{eq:formalvalidity} into the
original equation makes the residual smaller than $\eps^2$ as $\eps\to0$ while
the independent variables are held fixed. It does not by itself imply uniform
accuracy on domains that grow as $\eps\to0$.

The Lorentz-radial solutions found above contain no explicit polynomial factor
in $t$. Their correction terms are functions of
$W=\pm(\lambda s)^{-1/2}$ and $\ln|W|$, where
$s=t^2-x^2-y^2-z^2$. Their use is nevertheless restricted to regions in which
the perturbation ordering is preserved. In particular, one must require
\begin{equation}\label{eq:orderingconditions}
|\eps\varphi_1(W)|\ll|W|,
\qquad
|\eps^2\varphi_2(W)|\ll|W|.
\end{equation}
When $\varphi_1\not\equiv0$, one may additionally impose the
hierarchy $|\eps^2\varphi_2(W)|\ll|\eps\varphi_1(W)|$ uniformly on a chosen
subdomain on which $|\varphi_1|$ stays bounded away from zero. This comparison
is not imposed on the degenerate branches of
Theorem~\ref{thm:degeneratetower}, where $\varphi_1\equiv0$, and it
necessarily fails near the isolated zeros of a nontrivial $\varphi_1$ even
when the two-term approximation remains useful there; near such zeros a
separate local ordering, in which $\eps^2\varphi_2$ is compared with $W$ only,
is the appropriate requirement. Theorem~\ref{thm:resonanttower}(c) gives a
concrete instance: $\varphi_1$ is logarithmic in $W$ and vanishes at
$|W|=e^{-2/3}$, while $\varphi_2\propto W^{-1}$, so the hierarchy fails both
near that zero and as $W\to0$. One must in
addition remain away from the singular hypersurface $s=0$. These conditions
state the validity domain of the displayed second-order Lorentz-radial
solutions. The next section turns to a periodic reduction in which the loss of
uniformity can be seen explicitly and removed by phase renormalization.

\section{Long-scale validity of the FS expansion: cnoidal waves and phase renormalization}\label{sec:cnoidal}

Travelling waves reduce \eqref{eq:wavefull} by the exact translation symmetry
$\partial_t+c\,\partial_x$. The reduction exists for arbitrary $F$ and $G$ and
therefore does not detect the scale-invariance conditions of
Theorem~\ref{thm:classification}. Its role here is different: it provides a
tractable periodic orbit on which the coefficient equations of the FS system
can be followed over many wavelengths. The resulting calculation shows
explicitly how a formally correct FS expansion loses uniformity and how its
secular terms are resummed into an amplitude-dependent wavenumber.

\subsection{Secular growth and the travelling-wave test}\label{sec:secular}

A correction is called \emph{secular} when it grows with time or distance until
it is no longer smaller than the preceding term of the perturbation expansion.
The first-order FS solution given in Eq.~(5.11) of \cite{Tarayrah2023} for
$U_{tt}=(1+\eps U_x^2)U_{xx}$ gives a direct example. Its correction contains a
term proportional to
\begin{equation}\label{eq:secularexample}
\eps t(x-t)^3\exp\!\left[-3(x-t)^2\right].
\end{equation}
For fixed $x$ this term is bounded as $t\to\infty$, because the exponential
factor decays. Along $x=t+\xi$, however, it is proportional to
$\eps t\,\xi^3e^{-3\xi^2}$ and grows linearly in $t$ whenever $\xi\ne0$.
Thus the regular expansion loses its ordering on the slow scale
$t=O(\eps^{-1})$. Boundedness at each fixed spatial point is therefore not a
uniform validity test for a travelling wave.

The standard remedy is to absorb the accumulated change into a leading profile
or parameter that varies on a slow scale. For the nonlinear wave family
$U_{tt}=(1+\eps U_x^2)U_{xx}$ and related equations studied in
\cite{Tarayrah2023,McAdam2025}, the slow time $T=\eps t$ gives
evolution equations for the leading left- and right-moving profiles. For the
periodic orbit below, the same idea takes the Poincar\'e--Lindstedt form
\cite{Nayfeh1973}: the growing phase terms are absorbed into a corrected
wavenumber.

\subsection{The reduced equation and its periodic orbit}

Let $U=U(\xi)$, $\xi=x-ct$, and $\kappa=c^2-1\ne0$. We restrict attention to
$\kappa\lambda>0$, the condition for the periodic orbits used below, and
consider the orbit through the initial data $U(0)=\mathcal A$, $U'(0)=0$.
Positivity of the amplitude $\mathcal A$ is a normalization of the initial
data, while multiplying the reduced equation by $-1$ if necessary, which
replaces $(\kappa,\lambda,F,G)$ by $(-\kappa,-\lambda,-F,-G)$, permits
positive $\kappa$ and $\lambda$. We therefore assume
\begin{equation}\label{eq:twhyp}
\mathcal A>0,\qquad \kappa>0,\qquad \lambda>0,
\end{equation}
and assume that $F$ and $G$ are sufficiently smooth on a neighbourhood of
$[-\mathcal A,\mathcal A]$. In this subsection primes denote $d/d\xi$.
Equation \eqref{eq:wavefull} becomes
\begin{equation}\label{eq:twode}
\kappa U''+\lambda U^3+\eps F(U)+\eps^2G(U)=0,
\end{equation}
with first integral
\begin{equation}\label{eq:twfirstint}
\frac{\kappa}{2}U'^2+V_\eps(U)=\mathcal H,
\qquad
V_\eps(u)=\frac{\lambda}{4}u^4+\eps f(u)+\eps^2g(u),
\qquad F=f',\quad G=g'.
\end{equation}
With the initial data $U(0)=\mathcal A$, $U'(0)=0$, one has
$\mathcal H=V_\eps(\mathcal A)$. For sufficiently small $|\eps|$, the lower
turning point adjacent to $\mathcal A$ persists; denote it by
$U_-(\eps)<\mathcal A$. The periodic orbit then has wavelength
\begin{equation}\label{eq:twwavelength}
X(\mathcal A,\eps)
=\sqrt{2\kappa}\int_{U_-(\eps)}^{\mathcal A}
\frac{du}{\sqrt{V_\eps(\mathcal A)-V_\eps(u)}}.
\end{equation}
This quadrature defines the orbit locally for general smooth perturbations. If
the total potential $V_\eps$ is a polynomial of degree at most four, the
quadrature is elementary or elliptic; for more general potentials it is
generically hyperelliptic or nonclassical, although special further reductions
may occur.

Substitution of
$U=U_0+\eps U_1+\eps^2U_2+o(\eps^2)$ into \eqref{eq:twode} gives
\begin{subequations}\label{eq:twsys}
\begin{align}
&\kappa U_0''+\lambda U_0^3=0,\label{eq:twsys0}\\
&\kappa U_1''+3\lambda U_0^2U_1=-F(U_0),\label{eq:twsys1}\\
&\kappa U_2''+3\lambda U_0^2U_2
=-3\lambda U_0U_1^2-F'(U_0)U_1-G(U_0).\label{eq:twsys2}
\end{align}
\end{subequations}
These equations are precisely the travelling-wave reduction of the
second-order FS system \eqref{eq:E0}--\eqref{eq:E2}; in particular, the same
linearized operator acts on each newly introduced coefficient. The example
therefore tests not the formal correctness of that hierarchy, but the
long-scale validity of its regular partial sums.

The solution of \eqref{eq:twsys0} with
$U_0(0)=\mathcal A$, $U_0'(0)=0$ is
\begin{equation}\label{eq:twU0}
U_0=\mathcal A\operatorname{cn}\!\left(\omega_0\xi,\tfrac1{\sqrt2}\right),
\qquad
\omega_0=\mathcal A\sqrt{\lambda/\kappa}.
\end{equation}
Set
\begin{equation}\label{eq:twtheta}
\theta=\omega_0\xi,
\qquad K=K\!\left(\tfrac1{\sqrt2}\right),
\qquad
\langle h\rangle=\frac1{4K}\int_0^{4K}h(\theta)\,d\theta.
\end{equation}
From here on primes denote $d/d\theta$. The period is $4K$, and
\begin{equation}\label{eq:twU0derivs}
\begin{aligned}
U_0(\theta)&=\mathcal A\operatorname{cn}\!\left(\theta,\tfrac1{\sqrt2}\right),\\
U_0'(\theta)&=-\mathcal A\operatorname{sn}\!\left(\theta,\tfrac1{\sqrt2}\right)
\operatorname{dn}\!\left(\theta,\tfrac1{\sqrt2}\right),\\
U_0''(\theta)&=-\frac{U_0(\theta)^3}{\mathcal A^2}.
\end{aligned}
\end{equation}
Using $\kappa\omega_0^2=\lambda\mathcal A^2$, the correction equations become
\begin{equation}\label{eq:twcorr}
\begin{aligned}
U_1''+\frac{3U_0^2}{\mathcal A^2}U_1
&=-\frac{F(U_0)}{\lambda\mathcal A^2},\\
U_2''+\frac{3U_0^2}{\mathcal A^2}U_2
&=-\frac{3\lambda U_0U_1^2+F'(U_0)U_1+G(U_0)}
{\lambda\mathcal A^2}.
\end{aligned}
\end{equation}
The homogeneous equation has the fundamental solutions
\begin{equation}\label{eq:twhom}
y_1(\theta)=U_0'(\theta),
\qquad
y_2(\theta)=U_0(\theta)+\theta U_0'(\theta),
\qquad
y_1y_2'-y_1'y_2=\mathcal A^2.
\end{equation}
The second solution is the characteristic of the dilation \eqref{eq:D0}
evaluated on the travelling wave.

\subsection{Secular corrections and phase renormalization}

\begin{proposition}[First correction]\label{prop:twU1}
The solution of the first equation in \eqref{eq:twcorr} with
$U_1(0)=U_1'(0)=0$ is
\begin{equation}\label{eq:twU1}
U_1(\theta)=\frac1{\lambda\mathcal A^4}
\Bigl[U_0'(\theta)\bigl(\Phi(\theta)+f(\mathcal A)\theta\bigr)
-U_0(\theta)\bigl(f(U_0(\theta))-f(\mathcal A)\bigr)\Bigr],
\end{equation}
where
\begin{equation}\label{eq:twPhi}
\Phi(\theta)=\int_0^\theta
\left[U_0(\tau)F(U_0(\tau))-f(U_0(\tau))\right]d\tau.
\end{equation}
It decomposes as
\begin{equation}\label{eq:twS1}
U_1(\theta)=S_1\theta U_0'(\theta)+P_1(\theta),
\qquad
S_1=\frac1{\lambda\mathcal A^4}
\left[\frac{\Phi(4K)}{4K}+f(\mathcal A)\right],
\end{equation}
where
\begin{equation}\label{eq:twP1}
\begin{aligned}
P_1(\theta)&=\frac1{\lambda\mathcal A^4}
\Bigl[U_0'(\theta)\widetilde\Phi(\theta)
-U_0(\theta)\bigl(f(U_0(\theta))-f(\mathcal A)\bigr)\Bigr],\\
\widetilde\Phi(\theta)&=\Phi(\theta)-\frac{\Phi(4K)}{4K}\theta.
\end{aligned}
\end{equation}
The function $P_1$ is $4K$-periodic and satisfies
$P_1(0)=P_1'(0)=0$.
\end{proposition}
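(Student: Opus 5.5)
The plan is to solve the first equation of \eqref{eq:twcorr} by variation of parameters with the fundamental pair \eqref{eq:twhom}, and then remove the secular-looking integrals by two integrations by parts. Write the right-hand side as $R(\theta)=-F(U_0(\theta))/(\lambda\mathcal A^2)$. Since $y_1y_2'-y_1'y_2=\mathcal A^2$, the solution with $U_1(0)=U_1'(0)=0$ is
\[
U_1(\theta)=\frac{1}{\mathcal A^2}\Bigl[y_2(\theta)\int_0^\theta y_1R\,d\tau-y_1(\theta)\int_0^\theta y_2R\,d\tau\Bigr].
\]
The orientation of the Wronskian fixes the overall sign. I will confirm it by checking $U_1''+3U_0^2U_1/\mathcal A^2=R$ directly at the end, which is routine.

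The two integrals then collapse.

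\emph{First integral.} Since $F=f'$, we have $y_1R=-U_0'F(U_0)/(\lambda\mathcal A^2)=-\tfrac{d}{d\tau}f(U_0)/(\lambda\mathcal A^2)$. Hence
\[
\int_0^\theta y_1R=-\frac{f(U_0(\theta))-f(\mathcal A)}{\lambda\mathcal A^2},
\]
using $U_0(0)=\mathcal A$.

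\emph{Second integral.} Write $y_2R=-(U_0F(U_0)+\tau U_0'F(U_0))/(\lambda\mathcal A^2)$. Integrating $\tau\,\tfrac{d}{d\tau}f(U_0)$ by parts gives $\theta f(U_0(\theta))-\int_0^\theta f(U_0)$. Therefore
\[
\int_0^\theta y_2R=-\frac{\Phi(\theta)+\theta f(U_0(\theta))}{\lambda\mathcal A^2},
\]
with $\Phi$ as in \eqref{eq:twPhi}.

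Substituting both integrals, the two terms $\theta U_0'f(U_0)$ cancel. The one coming from $y_2=U_0+\theta U_0'$ cancels against the one coming from $y_1=U_0'$. What remains is exactly \eqref{eq:twU1}, with $f(\mathcal A)\theta$ multiplying $U_0'$.

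For the decomposition, split $\Phi(\theta)+f(\mathcal A)\theta=\widetilde\Phi(\theta)+\bigl(\Phi(4K)/4K+f(\mathcal A)\bigr)\theta$. This gives \eqref{eq:twS1} and \eqref{eq:twP1} immediately.

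Periodicity of $P_1$ goes as follows. The integrand $U_0F(U_0)-f(U_0)$ of $\Phi$ is $4K$-periodic, because $U_0$ is. Hence $\Phi(\theta+4K)=\Phi(\theta)+\Phi(4K)$, so $\widetilde\Phi$ is $4K$-periodic. The remaining factors $U_0$, $U_0'$ and $f(U_0)$ are periodic as well.

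For the initial values, $P_1(0)=0$ follows from $U_0'(0)=0$ and $f(U_0(0))-f(\mathcal A)=0$. For $P_1'(0)$, every term contains one of the factors $\widetilde\Phi(0)=0$, $U_0'(0)=0$ or $f(U_0(0))-f(\mathcal A)=0$, so it vanishes. The same check, together with $(\theta U_0')'(0)=0$, confirms $U_1(0)=U_1'(0)=0$.

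The main obstacle is only bookkeeping: getting the sign and orientation of the variation-of-parameters formula consistent with the stated Wronskian, and spotting that the $\theta f(U_0)$ terms cancel. Without that cancellation, the secular coefficient would not reduce to the constant $S_1$. I would settle the sign first by direct substitution, and then carry out the integrations by parts as above.
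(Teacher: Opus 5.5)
Your proposal is correct and follows essentially the same route as the paper: variation of parameters with the pair \eqref{eq:twhom}, the identity $U_0'F(U_0)=(f(U_0))'$ with one integration by parts giving $\Phi(\theta)+\theta f(U_0(\theta))$, and the splitting of $\Phi$ into a linear part and a $4K$-periodic part. The Green's-function formula you wrote already has the correct sign for $y_1y_2'-y_1'y_2=\mathcal A^2$, so the promised sign check will pass. Your explicit verification of $P_1(0)=P_1'(0)=0$ spells out a step that the paper leaves implicit.
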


\begin{proof}
Since $U_0'F(U_0)=(f(U_0))'$ and
\[
\int_0^\theta\bigl(U_0+\tau U_0'\bigr)F(U_0)\,d\tau
=\Phi(\theta)+\theta f(U_0(\theta)),
\]
variation of parameters with \eqref{eq:twhom} gives \eqref{eq:twU1}. The
integrand in \eqref{eq:twPhi} is periodic, so
$\Phi(\theta)=(\Phi(4K)/(4K))\theta+\widetilde\Phi(\theta)$ with
$\widetilde\Phi$ periodic. This yields \eqref{eq:twS1}--\eqref{eq:twP1}.
\end{proof}

\begin{proposition}[Second correction and corrected wavenumber]
\label{prop:twU2}
For the periodic orbit described above, define
\begin{equation}\label{eq:twomega}
\omega(\mathcal A,\eps):=\frac{4K}{X(\mathcal A,\eps)}
=\omega_0\bigl(1+\eps S_1+\eps^2S_2+o(\eps^2)\bigr).
\end{equation}
The first coefficient agrees with \eqref{eq:twS1}. The solution of the second
equation in \eqref{eq:twcorr} with $U_2(0)=U_2'(0)=0$ has the decomposition
\begin{equation}\label{eq:twU2}
U_2(\theta)=\frac{S_1^2}{2}\theta^2U_0''(\theta)
+S_2\theta U_0'(\theta)
+S_1\theta P_1'(\theta)+P_2(\theta),
\end{equation}
where $P_2$ is $4K$-periodic and
$P_2(0)=P_2'(0)=0$. Besides differentiation of the period
\eqref{eq:twwavelength}, $S_2$ may be obtained from the one-period monodromy
condition
\begin{equation}\label{eq:twS2mono}
S_2=\frac{U_2'(4K)}{4K\,U_0''(0)}-S_1^2
-\frac{S_1P_1''(0)}{U_0''(0)}.
\end{equation}
Thus $P_2$ is computed from one linear initial-value problem and the periodicity
condition, even when no elementary expression for the exact orbit is available.
\end{proposition}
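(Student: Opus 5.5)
The plan is a Poincar\'e--Lindstedt comparison. The exact periodic solution of \eqref{eq:twode} through $U(0)=\mathcal A$, $U'(0)=0$ is even in $\xi$ and has period $X(\mathcal A,\eps)$ in $\xi$. It can therefore be written as $U=\mathcal U(\omega\xi/\omega_0\cdot\omega_0;\eps)$ with the phase $\psi=\omega\xi=\theta(1+\eps S_1+\eps^2S_2)+o(\eps^2)$, where $\mathcal U(\cdot;\eps)$ is $4K$-periodic in $\psi$ and $\mathcal U(\psi;\eps)=U_0(\psi)+\eps V_1(\psi)+\eps^2V_2(\psi)+o(\eps^2)$ with $V_j$ periodic and $V_j(0)=V_j'(0)=0$. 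Smoothness in $\eps$ of $X$, and hence of $\omega$, follows from \eqref{eq:twwavelength}, since $U_-(\eps)$ is a simple turning point ($V_\eps'(U_-)\neq0$ for small $\eps$, because $\lambda>0$ and $\mathcal A>0$). This defines $S_1,S_2$ by \eqref{eq:twomega}.

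First, expand $U_0(\theta+\eps S_1\theta+\eps^2S_2\theta)+\eps V_1(\theta+\eps S_1\theta)+\eps^2V_2(\theta)$ in $\eps$ at fixed $\theta$. At order $\eps$ this gives $S_1\theta U_0'+V_1$. The regular FS coefficient $U_1$ solves the same linear initial-value problem, by uniqueness of the Taylor coefficients of the solution of an ODE depending smoothly on $\eps$. Comparing with Proposition~\ref{prop:twU1} and using the fact that the decomposition of a function into ``$\theta\times$periodic $+$ periodic'' is unique shows that the secular coefficient of $U_1$ equals the $S_1$ of \eqref{eq:twomega}, with $V_1=P_1$.

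At order $\eps^2$ the expansion produces exactly $\tfrac{S_1^2}{2}\theta^2U_0''+S_2\theta U_0'+S_1\theta P_1'+V_2$. Setting $P_2:=V_2$ yields \eqref{eq:twU2}, with $P_2$ periodic and $P_2(0)=P_2'(0)=0$. The initial conditions hold because the $\theta$-multiplied terms vanish at $0$ together with their first derivatives: at $\theta=0$ the derivative of $\theta U_0'$ is $U_0'(0)=0$, and that of $\theta P_1'$ is $P_1'(0)=0$.

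For the monodromy formula \eqref{eq:twS2mono}, differentiate \eqref{eq:twU2} and evaluate at $\theta=4K$, using periodicity together with $U_0'(0)=P_1'(0)=P_2'(0)=0$. The term $\tfrac{S_1^2}{2}\theta^2U_0''$ contributes $S_1^2\,4K\,U_0''(0)$, since $U_0'''(0)=0$ by evenness. The term $S_2\theta U_0'$ contributes $S_2\,4K\,U_0''(0)$. The term $S_1\theta P_1'$ contributes $S_1\,4K\,P_1''(0)$. Solving for $S_2$, with $U_0''(0)=-\mathcal A\neq0$, gives \eqref{eq:twS2mono}. The remaining claim, that $P_2$ follows from one linear initial-value problem for $U_2$ plus this periodicity condition, is then immediate.

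The main obstacle is the justification of the first step. The exact orbit must be representable as a smooth periodic profile in the renormalized phase with $\eps$-smooth Taylor coefficients that are themselves periodic. I would establish this by reparametrizing the exact solution as $\mathcal U(\psi;\eps):=U(\psi/\omega(\mathcal A,\eps))$ (in $\xi$, scaled by $\omega_0$). This function is exactly $4K$-periodic in $\psi$ for every $\eps$ and smooth in $(\psi,\eps)$, because it solves a smooth ODE in $\psi$ with coefficient $\omega^2$. Its $\eps$-derivatives at $\eps=0$ are therefore periodic, which supplies the uniqueness-of-decomposition argument used above.
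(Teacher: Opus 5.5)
Your proposal is correct and follows the same route as the paper's proof. It expands the exact orbit at fixed $\xi$ and at the renormalized phase $(\omega/\omega_0)\theta$ with $4K$-periodic coefficients, matches orders to obtain \eqref{eq:twS1} and \eqref{eq:twU2}, and differentiates \eqref{eq:twU2} at $\theta=4K$ to get \eqref{eq:twS2mono}. Your extra steps make explicit what the paper leaves implicit: the rescaled ODE for $\mathcal U(\psi;\epsilon)$, which gives joint smoothness and exact periodicity of the fixed-phase profile, and the uniqueness of the ``$\theta\times$periodic $+$ periodic'' splitting, which identifies $S_1$ and $V_1=P_1$.
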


\begin{proof}
The periodic orbit depends smoothly on $\eps$ while the two turning points
remain simple. Expanding it at fixed $\xi$ gives
$U_0+\eps U_1+\eps^2U_2$. Expanding it instead at the fixed phase, write the
orbit as $Q(\phi,\eps)=U_0(\phi)+\eps P_1(\phi)+\eps^2P_2(\phi)+o(\eps^2)$
with $\phi=\bigl(\omega(\mathcal A,\eps)/\omega_0\bigr)\theta$ and all
coefficient functions $4K$-periodic; then, at fixed $\theta$,
\[
U_0(\phi)=U_0(\theta)+(\eps S_1+\eps^2S_2)\,\theta U_0'(\theta)
+\tfrac{\eps^2S_1^2}{2}\,\theta^2U_0''(\theta)+o(\eps^2),
\qquad
P_1(\phi)=P_1(\theta)+\eps S_1\theta P_1'(\theta)+o(\eps),
\]
and comparing the two expansions of the same orbit yields
\eqref{eq:twS1} and \eqref{eq:twU2}. Differentiating \eqref{eq:twU2} and
setting $\theta=4K$, where
$U_0'=U_0'''=P_1'=P_2'=0$ while $U_0''(4K)=U_0''(0)$ and
$P_1''(4K)=P_1''(0)$ by periodicity, gives \eqref{eq:twS2mono}.
\end{proof}

The order-consistent phase approximations use different truncations of the
wavenumber:
\begin{equation}\label{eq:twomegatrunc}
\omega_{[1]}=\omega_0(1+\eps S_1),
\qquad
\omega_{[2]}=\omega_0(1+\eps S_1+\eps^2S_2).
\end{equation}
They are
\begin{equation}\label{eq:twren}
\begin{aligned}
U^{[1]}(\xi)&=U_0(\omega_{[1]}\xi)
+\eps P_1(\omega_{[1]}\xi),\\
U^{[2]}(\xi)&=U_0(\omega_{[2]}\xi)
+\eps P_1(\omega_{[2]}\xi)
+\eps^2P_2(\omega_{[2]}\xi).
\end{aligned}
\end{equation}
Expansion at fixed $\xi$ reproduces the regular FS series through first and
second order, respectively. Under the stated smoothness assumptions, their
residuals are $O(\eps^2)$ and $O(\eps^3)$, and all displayed profile functions
remain bounded in $\xi$. An omitted higher-order wavenumber correction can
still accumulate over a sufficiently long interval; the purpose of
\eqref{eq:twomegatrunc} is to compare the two asymptotic orders without placing
the second-order phase information into the first-order approximation.

\subsection{Integer-power family and numerical validation}

Let $F(U)=AU^N$ and $G(U)=BU^M$, with integers $N,M\ge2$. Then
$f(u)=Au^{N+1}/(N+1)$, $g(u)=Bu^{M+1}/(M+1)$, and
\begin{equation}\label{eq:twS1pow}
S_1=\frac{A\,(NI_{N+1}+1)}{\lambda(N+1)}\mathcal A^{N-3},
\qquad
I_m:=\left\langle\operatorname{cn}^m\!\left(\theta,\tfrac1{\sqrt2}\right)
\right\rangle.
\end{equation}
Here $I_m=0$ for odd $m$, while differentiation of the product
$\operatorname{sn}\!\left(\theta,\tfrac1{\sqrt2}\right)
\operatorname{dn}\!\left(\theta,\tfrac1{\sqrt2}\right)
\operatorname{cn}^{m+1}\!\left(\theta,\tfrac1{\sqrt2}\right)$ gives
\begin{equation}\label{eq:twrecur}
\begin{aligned}
\int_0^\theta\operatorname{cn}^{m+4}\!\left(\tau,\tfrac1{\sqrt2}\right)d\tau
&=\frac{m+1}{m+3}
\int_0^\theta\operatorname{cn}^{m}\!\left(\tau,\tfrac1{\sqrt2}\right)d\tau\\
&\quad+\frac{2}{m+3}\,
\operatorname{sn}\!\left(\theta,\tfrac1{\sqrt2}\right)
\operatorname{dn}\!\left(\theta,\tfrac1{\sqrt2}\right)
\operatorname{cn}^{m+1}\!\left(\theta,\tfrac1{\sqrt2}\right),
\qquad
I_{m+4}=\frac{m+1}{m+3}I_m.
\end{aligned}
\end{equation}
The base integrals are
\begin{equation}\label{eq:twbases}
\begin{aligned}
&\int_0^\theta d\tau=\theta,
\qquad
\int_0^\theta\operatorname{cn}\!\left(\tau,\tfrac1{\sqrt2}\right)d\tau
=\sqrt2\,\arcsin\!\left[
\tfrac1{\sqrt2}\operatorname{sn}\!\left(\theta,\tfrac1{\sqrt2}\right)
\right],\\
&\int_0^\theta\operatorname{cn}^2\!\left(\tau,\tfrac1{\sqrt2}\right)d\tau
=2\cE(\theta)-\theta,
\qquad
\cE(\theta)=\int_0^\theta\operatorname{dn}^2
\!\left(\tau,\tfrac1{\sqrt2}\right)d\tau,\\
&\int_0^\theta\operatorname{cn}^3\!\left(\tau,\tfrac1{\sqrt2}\right)d\tau
=\operatorname{sn}\!\left(\theta,\tfrac1{\sqrt2}\right)
\operatorname{dn}\!\left(\theta,\tfrac1{\sqrt2}\right).
\end{aligned}
\end{equation}
Thus $I_4=1/3$, $I_8=5/21$, and
\[
I_2=\frac{2E(1/\sqrt2)}{K}-1
=\frac{8\pi^2}{\Gamma(1/4)^4}.
\]
The periodic part of the first correction is
\begin{equation}\label{eq:twP1pow}
P_1(\theta)=\frac{A\mathcal A^{N-2}}{\lambda(N+1)}
\left[\operatorname{cn}\!\left(\theta,\tfrac1{\sqrt2}\right)
-\operatorname{cn}^{N+2}\!\left(\theta,\tfrac1{\sqrt2}\right)
-N\operatorname{sn}\!\left(\theta,\tfrac1{\sqrt2}\right)
\operatorname{dn}\!\left(\theta,\tfrac1{\sqrt2}\right)
\widetilde C_N(\theta)\right],
\end{equation}
where
\begin{equation}\label{eq:twCtilde}
\widetilde C_N(\theta)
=\int_0^\theta\operatorname{cn}^{N+1}\!\left(\tau,\tfrac1{\sqrt2}\right)d\tau
-I_{N+1}\theta.
\end{equation}
In $S_2$, the contribution of $g$ is linear and is obtained from
\eqref{eq:twS1pow} by replacing $(A,N)$ with $(B,M)$; the remaining contribution
is quadratic in $A$. For the integer-power members of the generic classified
branch, one has $N\ge4$ and $M=2N-3$. Both contributions to $S_2$ then scale as
$\mathcal A^{2(N-3)}$, so the expansion of $\omega/\omega_0$ depends on
$(\eps,\mathcal A)$ through the combination
$\eps\mathcal A^{N-3}$ of Remark~\ref{rem:weights}.

For the numerical comparison, take
$\lambda=1$, $c=\sqrt2$, $\kappa=1$, the wave amplitude $\mathcal A=0.9$,
the perturbation coefficients $A=B=1$, and the exponents $N=5$ and
$M=7$. (Throughout this section, calligraphic $\mathcal A$ denotes the wave
amplitude and roman $A$, $B$ the coefficients of $F$ and $G$, as in
Theorem~\ref{thm:classification}.) Then
\begin{equation}\label{eq:twnumbers}
S_1=0.3200633653,
\qquad
S_2=0.1638741121.
\end{equation}
The second value is obtained from \eqref{eq:twS2mono}. Independently, since
$N$ and $M$ are odd, the potential $V_\eps$ is even, both turning points equal
$\pm\mathcal A$, and the expansion of the period integral
\eqref{eq:twwavelength} in $\eps$ can be evaluated to machine precision by a
quadrature with the turning-point singularities absorbed; this gives
$S_2=0.1638741121$, in agreement with \eqref{eq:twS2mono} to better
than $10^{-10}$. For $\eps=0.1$, $0.25$, and $0.5$, the comparison window is
\[
0\le\xi\le\Xi=\frac{4.5}{\eps S_1\omega_0},
\]
so $\Xi=156.2$, $62.5$, and $31.2$. Using the exact perturbed wavelength, these
windows contain approximately $19.59$, $8.26$, and $4.53$ wavelengths. The
case $\eps=0.5$ is included as an additional comparison at a moderately large
perturbation value. It shows how the phase-renormalized approximations behave
beyond the small-parameter range and is not used to infer an asymptotic error
law as $\eps\to0$.

Table~\ref{tab:twerrors} reports the maximal absolute errors on the initial
and final $10\%$ of each comparison window. Since $\Xi$ scales as
$\eps^{-1}$, these windows have different lengths; their initial $10\%$
contains approximately $2.0$, $0.8$, and $0.45$ wavelengths, respectively.
The windows are chosen to give the same accumulated first-order phase
correction, so comparisons between the rows are not intended as a convergence
study in $\eps$. Because both $\eps$ and the interval length vary, the error
of the regular partial sum on the initial segments is not expected to be
monotone in $\eps$.

Figures~\ref{fig:cn010}--\ref{fig:cn050} show the last three wavelengths and the error over
the full window. The regular partial sum becomes dominated by its secular
terms. The genuine first-order approximation remains bounded, but the omitted
$\eps^2S_2$ phase correction accumulates over the $O(\eps^{-1})$ window. The
second-order approximation corrects this drift: over the final $10\%$ of the
three windows, its maximal error is smaller than that of $U^{[1]}$ by factors
of approximately $27$, $11$, and $5$.

\begin{table}[H]
\centering
\caption{Maximal absolute errors over the initial and final $10\%$ of the
comparison window. The first data column in each group is the regular partial
sum $U_0+\eps U_1+\eps^2U_2$; the table is generated by the script
described at the end of this section.}
\label{tab:twerrors}
\small

% Generated by tw_compare.m; do not edit by hand.
\begin{tabular}{lcccccc}
\toprule
& \multicolumn{3}{c}{$0\leq\xi\leq\Xi/10$} & \multicolumn{3}{c}{$0.9\,\Xi\leq\xi\leq\Xi$}\\
\cmidrule(lr){2-4}\cmidrule(lr){5-7}
$\eps$ & $U_0{+}\eps U_1{+}\eps^2U_2$ & $U^{[1]}$ & $U^{[2]}$ & $U_0{+}\eps U_1{+}\eps^2U_2$ & $U^{[1]}$ & $U^{[2]}$\\
\midrule
$0.10$ & $1.94\cdot10^{-2}$ & $1.29\cdot10^{-2}$ & $4.63\cdot10^{-4}$ & $7.67$ & $1.38\cdot10^{-1}$ & $5.06\cdot10^{-3}$\\
$0.25$ & $1.18\cdot10^{-2}$ & $3.25\cdot10^{-2}$ & $2.90\cdot10^{-3}$ & $8.26$ & $3.31\cdot10^{-1}$ & $3.11\cdot10^{-2}$\\
$0.50$ & $3.79\cdot10^{-2}$ & $4.58\cdot10^{-2}$ & $7.24\cdot10^{-3}$ & $7.43$ & $5.71\cdot10^{-1}$ & $1.10\cdot10^{-1}$\\
\bottomrule
\end{tabular}

\end{table}

\begin{figure}[t]
\centering
\includegraphics[width=0.92\textwidth]{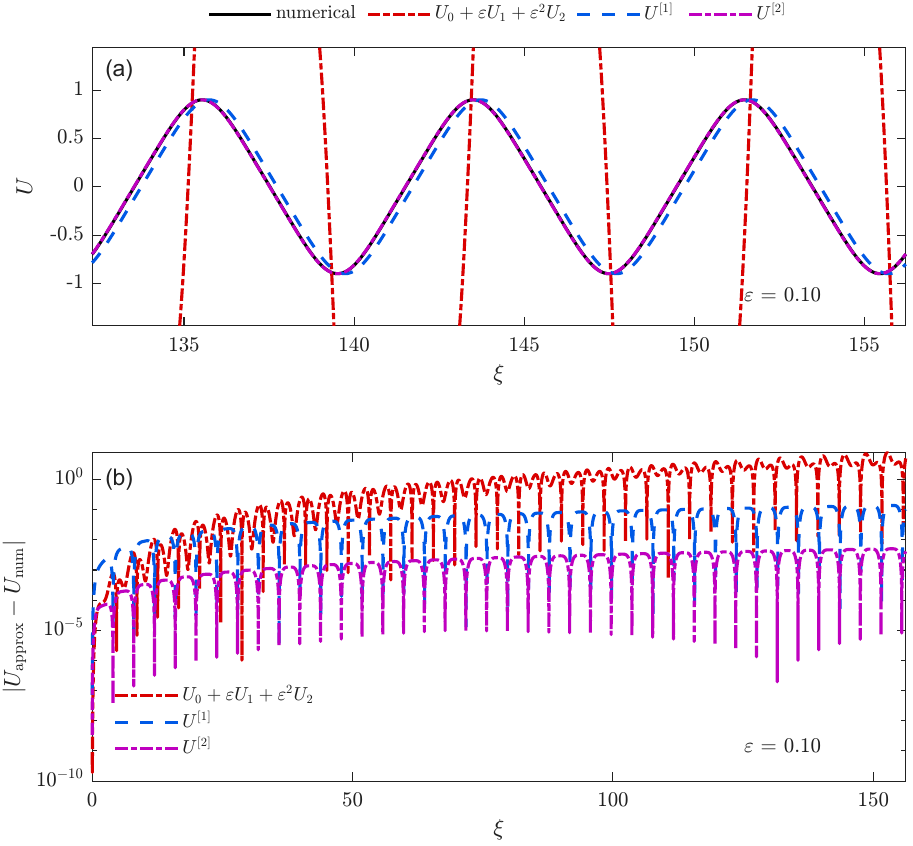}
\caption{$\eps=0.1$, window $0\leq\xi\leq156.2$. (a)~The last three exact
wavelengths: the numerical solution of \eqref{eq:twode}, the regular FS
partial sum $U_0+\eps U_1+\eps^2U_2$, and the order-consistent approximations
$U^{[1]}$ and $U^{[2]}$ of \eqref{eq:twren}. The regular partial sum leaves
the displayed vertical range over part of the interval; its error remains
fully shown in panel~(b). (b)~Absolute errors of the three approximations over
the full window. The second-order phase correction removes the $O(\eps^2)$
component of the phase drift and substantially reduces the long-scale error.}
\label{fig:cn010}
\end{figure}

\begin{figure}[t]
\centering
\includegraphics[width=0.92\textwidth]{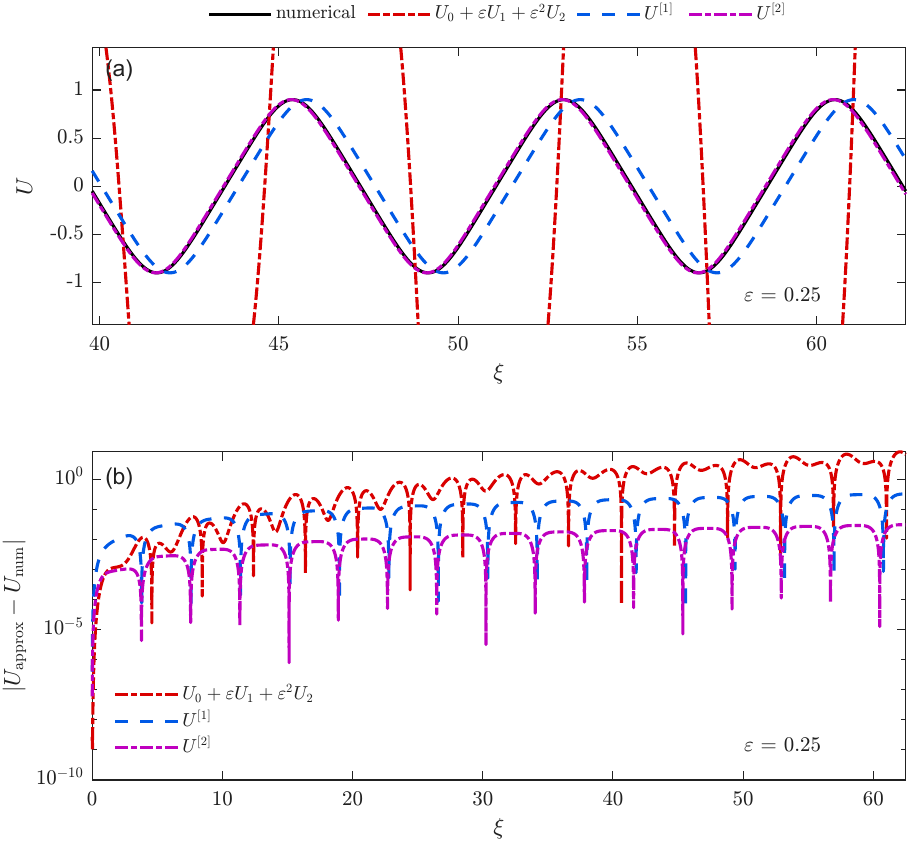}
\caption{$\eps=0.25$, window $0\leq\xi\leq62.5$. (a)~The last three exact
wavelengths: the numerical solution of \eqref{eq:twode}, the regular FS
partial sum $U_0+\eps U_1+\eps^2U_2$, and the order-consistent approximations
$U^{[1]}$ and $U^{[2]}$ of \eqref{eq:twren}. The regular partial sum leaves
the displayed vertical range over part of the interval; its error remains
fully shown in panel~(b). (b)~Absolute errors of the three approximations over
the full window. The second-order phase correction removes the $O(\eps^2)$
component of the phase drift and substantially reduces the long-scale error.}
\label{fig:cn025}
\end{figure}

\begin{figure}[t]
\centering
\includegraphics[width=0.92\textwidth]{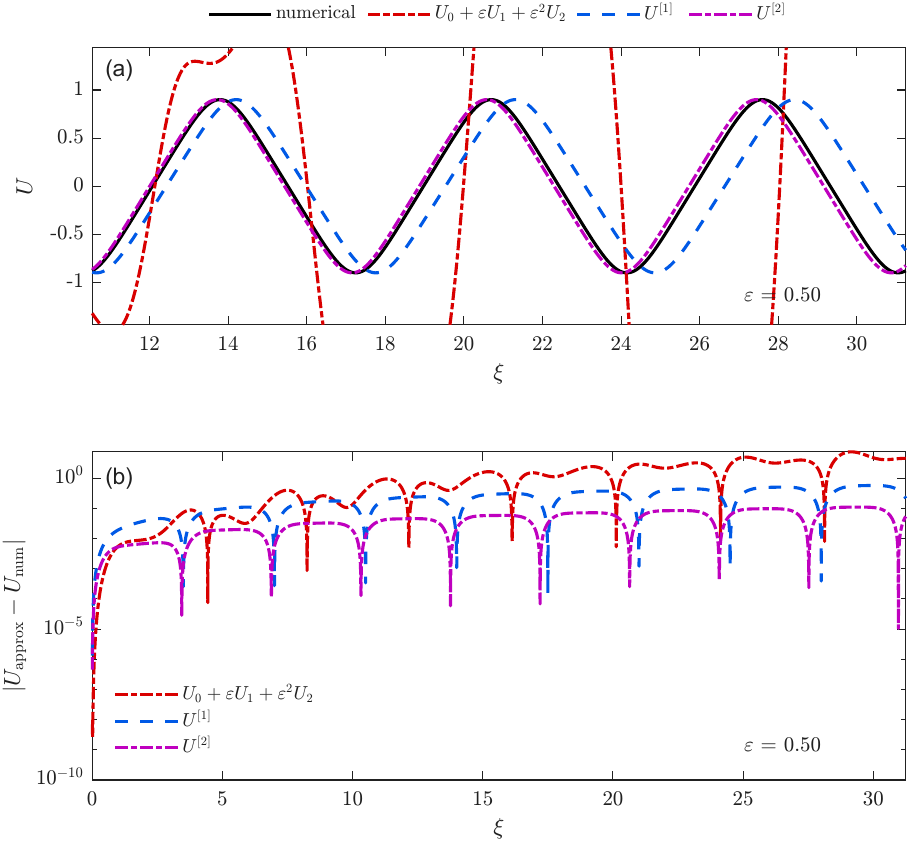}
\caption{$\eps=0.5$, window $0\leq\xi\leq31.2$. (a)~The last three exact
wavelengths: the numerical solution of \eqref{eq:twode}, the regular FS
partial sum $U_0+\eps U_1+\eps^2U_2$, and the order-consistent approximations
$U^{[1]}$ and $U^{[2]}$ of \eqref{eq:twren}. The regular partial sum leaves
the displayed vertical range over part of the interval; its error remains
fully shown in panel~(b). (b)~Absolute errors of the three approximations over
the full window. The second-order phase correction removes the $O(\eps^2)$
component of the phase drift and substantially reduces the long-scale error.}
\label{fig:cn050}
\end{figure}

\FloatBarrier
\begin{remark}[Fourier structure of the error]\label{rem:twfourier}
The comparison above suggests that the error of the renormalized
approximations \eqref{eq:twren} is dominated by a residual wavenumber
mismatch rather than by a deformation of the wave. To separate the two
effects, define, for $j=1,2$,
\begin{equation}\label{eq:twmeasures}
E_{\mathrm{pos}}^{[j]}
=\frac{\bigl\|U^{[j]}-U\bigr\|_{L^2(0,\Xi)}}{\|U\|_{L^2(0,\Xi)}},
\qquad
E_{\omega}^{[j]}
=\frac{\bigl|\omega_{[j]}-\omega\bigr|}{\omega},
\qquad
E_{\mathrm{shape}}^{[j]}
=\Bigl[\sum_{n=1}^{10}
\bigl(\hat v^{[j]}_n-\hat v_n\bigr)^2\Bigr]^{1/2},
\end{equation}
where $U$ is the numerical solution of \eqref{eq:twode},
$\omega=\omega(\mathcal A,\eps)=4K/X(\mathcal A,\eps)$ is its wavenumber
\eqref{eq:twomega}, $\omega_{[j]}$ are the truncated wavenumbers
\eqref{eq:twomegatrunc}, and, for a periodic function $V$ with period $X_V$,
\begin{equation}\label{eq:twharm}
v_n=\left|\frac{1}{X_V}\int_0^{X_V}V(\xi)\,e^{-2\pi i n\xi/X_V}\,d\xi\right|,
\qquad
\hat v_n=v_n\Bigl(\sum_{k=1}^{10}v_k^2\Bigr)^{-1/2},
\end{equation}
computed with the period $X(\mathcal A,\eps)$ for $U$ and $4K/\omega_{[j]}$
for $U^{[j]}$. Thus $E_{\mathrm{pos}}$ is the relative solution error on the
comparison window; $E_{\omega}$ is the relative wavenumber error; and
$E_{\mathrm{shape}}$ compares the normalized magnitudes of the first ten
Fourier modes of one period of each wave. The last diagnostic is insensitive
to phase translation, period rescaling, and overall amplitude scaling, and
thus isolates a deformation of the waveform from a wavenumber mismatch; it
is not a complete metric on waveform shapes, since distinct profiles may
share a finite set of spectral magnitudes.
Figure~\ref{fig:cnfour} shows the three measures for $0.05\le\eps\le0.5$,
together with the values $S_2\eps^2$ and $|S_3|\eps^3$, where
$S_3=-0.0586$ is the third-order coefficient of $\omega/\omega_0$, obtained
by carrying the $\eps$-expansion of the period integral
\eqref{eq:twwavelength} one order further. The wavenumber errors follow these
values, and the waveform errors lie roughly two to three and a half orders
of magnitude below the position errors, between $1.5\cdot10^{-6}$ and
$1.3\cdot10^{-3}$ across the range: the error of the renormalized
approximations is almost entirely a wavenumber error. The regular partial sum enters through its
position error, which exceeds unity on these windows.
\end{remark}

\begin{figure}[t]
\centering
\includegraphics[width=0.9\textwidth]{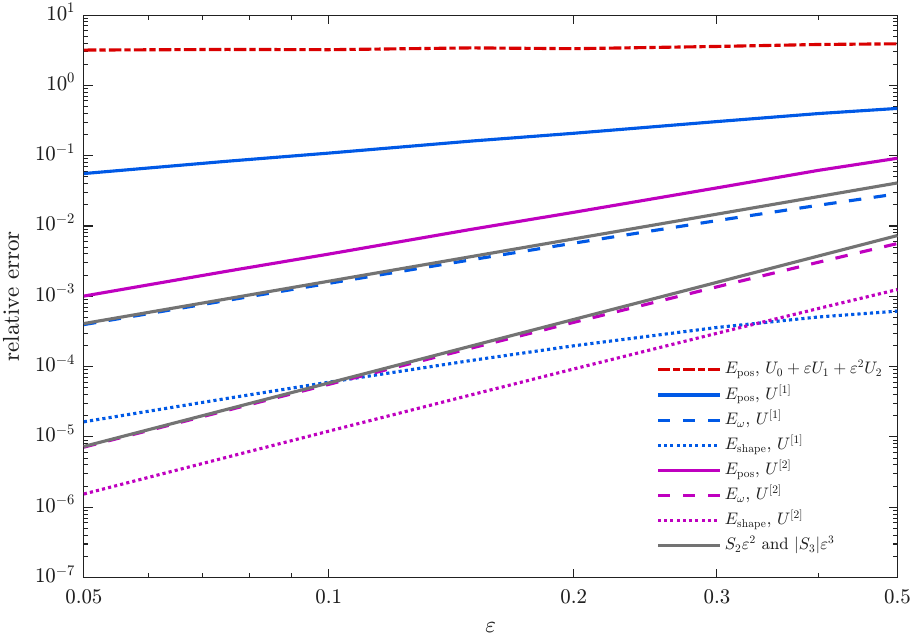}
\caption{The measures \eqref{eq:twmeasures} as functions of $\eps$: position
errors (solid), wavenumber errors (dashed), and waveform errors (dotted) for
$U^{[1]}$ and $U^{[2]}$, the position error of the regular partial sum
(dash-dotted), and the values $S_2\eps^2$ and $|S_3|\eps^3$ (thin gray).}
\label{fig:cnfour}
\end{figure}

\begin{remark}[Numerical verification]\label{rem:twverify}
The script described below evaluates $S_2$ independently
from the monodromy condition \eqref{eq:twS2mono} and from the exact-order
expansion of the regularized period integral; the two values agree to better
than $10^{-10}$. Over one period, the endpoint defects in $P_1$, $P_1'$,
$P_2$, and $P_2'$ are all below $1.6\cdot10^{-11}$, the defect in $P_2'$
being at roundoff level. The quadrature formula
\eqref{eq:twU1} agrees with direct integration of the first correction
equation to $4.8\cdot10^{-9}$ over three periods. The tabulated error maxima
are stable to a relative $3.1\cdot10^{-7}$ under halving of the evaluation
grid. The script generates Table~\ref{tab:twerrors},
Figures~\ref{fig:cn010}--\ref{fig:cnfour}, and the data files listed below.
\end{remark}

\subsection*{Code and data availability}

The numerical results of this section are produced by a single
MATLAB/GNU Octave script, \texttt{tw\_compare.m}, which is supplied as
supplementary material together with its output files.
The following details fix the computation. The correction equations
\eqref{eq:twcorr} are integrated as a first-order system in $\theta$ with the
explicit Runge--Kutta solver \texttt{ode45}, relative tolerance
$2\cdot10^{-12}$, absolute tolerance $2\cdot10^{-13}$, and maximum step
$4K/250$, on a grid of $500$ points per period extending over the longest
comparison window; the periodic profiles $P_1$ and $P_2$ are then extracted
on $6001$ points of one period by cubic spline interpolation and the
subtraction of the secular terms in \eqref{eq:twS1} and \eqref{eq:twU2}, with
$S_2$ taken from the monodromy condition \eqref{eq:twS2mono}. The exact
perturbed orbit is integrated by the same solver and tolerances on the
evaluation grid in $\xi$. Period integrals are evaluated by Gauss--Chebyshev
quadrature after the affine substitution $u=u_c+u_r\,s$, $s\in[-1,1]$, which
absorbs both turning-point square-root singularities; the exact perturbed
wavelength \eqref{eq:twwavelength} uses $2400$ nodes with the lower turning
point located by bracketing and \texttt{fzero}, whereas the coefficients
$S_1$, $S_2$, and $S_3$ of the expansion of $\omega/\omega_0$ are obtained with
$400$ nodes from the exact-order expansion of the period integrand, using the
evenness of $V_\eps$ for odd $N$ and $M$, so that both turning points are
$\pm\mathcal A$ and no numerical differencing is involved. The error maxima of
Table~\ref{tab:twerrors} use $1200$ evaluation points per exact wavelength,
with the discrete maximum refined by a three-point parabola, and are checked
against $600$ points per wavelength. The Fourier diagnostic
\eqref{eq:twmeasures} uses $1024$ samples of one period starting at
$\xi=0.55\,\Xi$. The script writes the table, the curve data of
Figures~\ref{fig:cn010}--\ref{fig:cn050}, the data of Figure~\ref{fig:cnfour},
and a verification log containing the quantities quoted in
Remark~\ref{rem:twverify}.

\FloatBarrier
\section{Conclusions and outlook}\label{sec:discussion}

This paper develops the Fushchych--Shtelen construction from a first-order
procedure into an order-independent framework. The coefficient equations
\eqref{eq:FSsystemGeneral} have the repeated linear structure displayed in
\eqref{eq:triangular}; thus each new perturbation level is governed by the same
linearized equation, with lower-order coefficients supplying its source. The
correspondence theorem, Theorem~\ref{thm:correspondence}, then identifies exactly
which FS symmetries arise from one BGI approximate characteristic. In this way,
the two principal definitions of approximate symmetry are related without
being treated as interchangeable.

The cubic wave equation shows what this framework can reveal in a concrete
problem. For perturbations nontrivial at order $\eps$, the second-order FS
classification in Theorem~\ref{thm:classification} (within the projectable
point-symmetry class and modulo the formal equivalences of
Section~\ref{sec:wave}) consists of the general
power family \eqref{eq:classgeneric} and three exceptional logarithmic
families; Proposition~\ref{prop:degenerateFS} gives the additional branches
for perturbations first appearing at order $\eps^2$. Among the nondegenerate
families, the direct vertical BGI calculation selects the cubic-logarithmic
pair \eqref{eq:BGInormalFG} and imposes the additional coefficient relation
\eqref{eq:BGIlockB}; Corollary~\ref{cor:BGIdegenerate} gives the separate
common cubic-logarithmic branch of the degenerate case. Hence the difference between the methods is not merely a
difference in notation: the FS system admits a larger set of exact symmetries,
while the BGI construction identifies the subset produced by expanding one
transformation of the original variables.

The classification is constructive. The Lorentz--dilation reduction gives the
leading field \eqref{eq:Wsol}, derives the d'Alembert--Hamilton relation
\eqref{eq:dHam} for this invariant family, and integrates the correction
coefficients in \eqref{eq:generictower} and
\eqref{eq:towerfivehalf}--\eqref{eq:towerquadraticlog}. On the family shared by
both methods, the solution \eqref{eq:intersectionsol} is the second-order
expansion \eqref{eq:anomalous} of a power with a perturbation-dependent
exponent. This gives the logarithmic terms a direct geometric meaning: they
record a gradual change in scaling.

For travelling cnoidal waves, Section~\ref{sec:cnoidal} obtains the first
correction explicitly by quadrature and isolates all second-order
secular terms in \eqref{eq:twU2}. The periodic second-order remainder is
computed from one linear initial-value problem together with the monodromy
condition \eqref{eq:twS2mono}. Using the order-consistent wavenumbers
\eqref{eq:twomegatrunc}, the approximations \eqref{eq:twren} remain bounded and
the second-order approximation removes the $O(\eps^2)$ component of the
long-scale phase drift of the first-order one (Table~\ref{tab:twerrors} and Figures~\ref{fig:cn010}--\ref{fig:cn050}). For
the integer-power members of the generic classified branch, the wavenumber
expansion involves the amplitude and perturbation parameter through
$\eps\mathcal A^{N-3}$. The scaling weights of the classification therefore
reappear in a travelling-wave reduction that, by itself, cannot select that
classification.

The broader lesson is that approximate symmetry has two complementary roles.
It classifies which structures of a limiting equation survive perturbation, and
it can turn that classification into explicit approximate solutions. The latter
role must be combined with a separate check of the scale on which the expansion
remains ordered. Section~\ref{sec:secular} and the cnoidal-wave calculation show how secular
growth can invalidate a formally correct FS partial sum on long scales and how
phase renormalization restores the ordering on the corresponding extended
scale, although still higher-order phase corrections may accumulate on longer
scales.

Natural next steps are the second-order conformal classification of the cubic
wave equation, an all-orders treatment of equations with differential
identities, and a corresponding higher-order relation between BGI approximate
conservation laws \cite{CTY2025} and exact conservation laws of FS systems. These questions
would extend the same principle used here: preserve the exact mathematical
structure by enlarging the coefficient system, while keeping the approximation
order and its domain of validity explicit.

\section*{Conflict of Interest} 
The author declares that there is no conflict of interest concerning the publication of this article.

\section*{Funding}
The author acknowledges support from the Natural Sciences and Engineering Research Council of Canada through Discovery Grant RGPIN-2024-04308.

\bibliographystyle{ieeetr}
\bibliography{UMJ_Fushchych_v19c}

\end{document}